\journal{ }
\date{}
\tikzstyle{process} = [rectangle,minimum width=2cm,minimum height=1cm,text centered,text width =4cm,draw=black]
\newcommand{\bmxi}{{\xi}}
\newcommand{\bmb}{\bm{\beta}}
\newcommand{\bmr}{\bm{\rho}}
\newcommand{\bms}{\bm{s}}
\newcommand{\bmp}{\bm{\Phi}}
\newcommand{\bmm}{\bm{m}}
\newcommand{\bmu}{\bm{u}}
\newcommand{\bma}{\bm{\alpha}}
\newcommand{\Th}{\mathcal{T}_h}
\DeclareMathOperator*{\argmin}{argmin}
\newtheorem{theorem}{Theorem}[section]
\newtheorem{proposition}{Proposition}[section]
\newtheorem{remark}[theorem]{Remark}
\numberwithin{equation}{section}
\begin{document}
%%%%%%% frontmatter %%%%%%%%%%%%%%%%%%%%%%%%%%%%%%%%
\begin{frontmatter}

%\title{Mean-field Games on Manifolds and their Fast Algorithms}
\title{High order spatial discretization for variational time implicit schemes: Wasserstein gradient flows and reaction-diffusion systems}
\author[1]{Guosheng Fu\fnref{fn1}}
\ead{gfu@nd.edu}

% \author[2]{Siting Liu\fnref{fn2}}
% \ead{siting6@math.ucla.edu}

\author[2]{Stanley Osher\fnref{fn2}}
\ead{sjo@math.ucla.edu}

\author[3]{Wuchen Li\fnref{fn3}}
\ead{wuchen@mailbox.sc.edu}

\cortext[cor]{Corresponding author}
\fntext[fn1]{G. Fu's work is supported in part by NSF DMS-2134168.}
\fntext[fn2]{S. Osher's work is supported in part by AFOSR MURI FP 9550-18-1-502, and ONR
grants:  N00014-20-1-2093, and N00014-20-1-2787.}

\fntext[fn3]{W. Li's work is supported by AFOSR MURI FP 9550-18-1-502, AFOSR YIP award 2023, and NSF RTG: 2038080.}

\affiliation[1]{organization={Department of Applied and Computational Mathematics and Statistics, University of Notre Dame},
                % addressline={},
                city={Notre Dame},
                postcode={IN 46556},
                country={USA}}
\affiliation[2]{organization={Department of Mathematics, University of California, Los Angeles},
                % addressline={},
                city={Los Angeles},
                postcode={CA 90095},
                country={USA}}     \affiliation[3]{organization={Department of Mathematics, University of South Carolina},
                % addressline={},
                city={Columbia},
                postcode={SC 29208},
                country={USA}}          

%    Abstract is required.
\begin{abstract}
We design and compute first-order implicit-in-time variational schemes with high-order spatial discretization for initial value gradient flows in generalized optimal transport metric spaces. 
We first review some examples of gradient flows in generalized optimal transport spaces from the Onsager principle. We then use a one-step time relaxation optimization problem for 
time-implicit schemes, namely generalized Jordan-Kinderlehrer-Otto schemes. Their minimizing systems satisfy implicit-in-time schemes for initial value gradient flows with first-order time accuracy. We adopt the first-order optimization scheme ALG2 (Augmented Lagrangian method) 
and high-order finite element methods in spatial discretization
to compute the one-step optimization problem. This allows us to derive the implicit-in-time update of initial value gradient flows iteratively. We remark that the iteration in ALG2 has a simple-to-implement point-wise update based on optimal transport and Onsager's activation functions. The proposed method is unconditionally stable for convex cases. 
Numerical examples are presented to demonstrate the effectiveness of the methods in two-dimensional PDEs, including Wasserstein gradient flows, Fisher--Kolmogorov-Petrovskii-Piskunov equation, and two and four species reversible reaction-diffusion systems. 
\end{abstract}
\begin{keyword}
High order computation; Entropy dissipation; Metric spaces; Generalized Jordan--Kinderlehrer--Otto schemes; Wasserstein gradient flows; Reversible reaction-diffusion systems. 
\end{keyword}

\end{frontmatter}

%    Text of article.
% % ---------------------------------------------
% % ---------------------------------------------
% % ---------------------------------------------

\section{Introduction}
\label{sec:intro}
Dissipative dynamics (gradient flows) are essential models in thermodynamics, chemistry, materials science, biological swarming, robotics path panning, and social sciences \cite{doi2011onsager,peletier2014variational}. Nowadays, they also find vast applications in designing machine learning optimization algorithms and Markov-Chain-Monte-Carlo sampling algorithms \cite{Amari, ChenLi2020_optimala, weinan2020machine, GaoLiLiu,Garbuno-InigoHoffmannLiStuart2020_interactinga, LiMontufar2018_naturalb,LinLiOsherMontufar2021_wassersteina,WangLi2022_accelerated}. In physics, dissipative dynamics describe that the systems have maximum efficiency, in which dynamics follow from the direction in which the (negative) entropy/Lyapunov functional dissipates most rapidly. It turns out that the dissipative dynamics are gradient flows in suitable metric spaces. Fast, efficient, and accurate dissipative dynamics simulations are one of the central problems in computational fluid dynamics. 

A particular type of gradient flow has been widely studied in optimal transport, where the metric is known as the Wasserstein-2 metric \cite{2005_gradienta,villani2008optimal}. Typical examples include gradient drift Fokker-Planck equations, porous media equations, aggregation-diffusion equations, etc. One property of simulating gradient flows is that one can design a proximal method for computing a variational implicit time algorithm. This algorithm is first proposed by Jordan-Kinderlehrer-Otto (JKO scheme) to compute Wasserstein gradient flows \cite{JKO}.  Moreover, general gradient flows have been widely studied. They follow the Onsager principle to design optimal transport-type metric spaces \cite{GaoLiLiu, LiLeeOsher22, Mielke11}. Similarly, one can develop variational proximal methods to compute and simulate gradient flow dynamics. 

This paper designs high-order spatial discretization in simulating gradient flow dynamics using variational proximal schemes in generalized optimal transport metric spaces. We formally illustrate the main computational framework. Consider a reaction-diffusion type equation:  
\begin{equation}\label{GD}
  \partial_t \rho=\nabla\cdot(V_1(\rho)\nabla \frac{\delta}{\delta \rho}\mathcal{E}(\rho))-V_2(\rho)\frac{\delta}{\delta\rho}\mathcal{E}(\rho). 
\end{equation}
where $\rho\colon \Omega\times \mathbb{R}_+\rightarrow \mathbb{R}_+$ is a scalar density function, $\Omega\subset \mathbb{R}^d$, $d=1, 2$, is a spatial domain with periodic or Neumann boundary conditions, $V_1$, $V_2\colon \Omega\times \mathbb{R}_+\rightarrow\mathbb{R}_+$ are positive mobility functions (Onsager activation functions), and $\mathcal{E}(\rho)\in\mathbb{R}$ is a Lyapunov functional (energy). 
We design a variational implicit time scheme, the linearized JKO scheme \cite{CancesGallouetTodeschi2020_variational,LiLuWang20}, to update equation \eqref{GD} as below:
\begin{subequations}\label{G-JKO}
\begin{align}
\label{G-JKO1}
\rho^{n}=\arg\min_{\rho} \underset{(\rho, m)}{\inf} \quad \frac{1}{2\Delta t}\int_\Omega 
\Big[\frac{|m|^2}{V_1(\rho)}+\frac{|s|^2}{V_2(\rho)}\Big] dx+
 \,\mathcal{E}(\rho),
\end{align}
where $\Delta t\geq 0$ is a stepsize and the minimization is over all functions $\rho\colon \Omega \rightarrow \mathbb{R}_+$, $m\colon \Omega \rightarrow\mathbb{R}^d$, and $s\colon \Omega \rightarrow\mathbb{R}$, subject to the constraint
\begin{align}
\label{Gs1}
\rho-\rho^{n-1}+\nabla\cdot m = s, \text{ on }\Omega.
\end{align}
\end{subequations}
We use time rescaling of $(m,s)$ in the constraint \eqref{Gs1}. We then compute variational problem \eqref{G-JKO} iteratively to find the sequence $\rho^n$, $n=1,2,\cdots$. This sequence forms an implicit update for gradient flow dynamic \eqref{GD}, which is first-order in time:
\begin{equation*}
    \frac{\rho^n-\rho^{n-1}}{\Delta t}=\nabla\cdot (V_1(\rho^n)\nabla\frac{\delta}{\delta\rho}\mathcal{E}(\rho^n)) -V_2(\rho^n)\frac{\delta}{\delta\rho}\mathcal{E}(\rho^n)+\mathcal{O}(\Delta t).
\end{equation*}
When $V_1$, $V_2$ is concave in term of $\rho$, and $\mathcal{E}$ is a convex functional, then the proposed method is unconditionally stable, meaning that we can take large time steps. 

%%% system
Our framework also works for reversible reaction-diffusion systems with detailed balance \cite{Mielke11,Glitzky13,LiuWangWang21a}. We illustrate the main idea for a simple 2-component reversible reaction-diffusion system:
Let  $X_1, X_2$ be two species with a single reversible 
reaction
$
X_1
\xrightleftharpoons[k_-]{k_+}
X_2,
$
with $k_-, k_+>0$.
Let $\rho_1$ and $\rho_2$ be the respective densities of 
$X_1$ and $X_2$.
This leads to the following PDE system
\cite{Mielke11,Peletier10}:
\begin{align*}
\partial_t\rho_1 -\gamma_1
\Delta \rho_1
=&\; -(k_+\rho_1-k_-\rho_2),\\
\partial_t\rho_2 - \gamma_2\Delta\rho_2
=&\; (k_+\rho_1-k_-\rho_2),
\end{align*}
with positive diffusion rates $\gamma_1,\gamma_2>0$.
By introducing the following mobility functions, 
\begin{align*}
V_{1,1}(\rho_1) = \gamma_1\rho_1,\quad
V_{1,2}(\rho_2) = \gamma_2\rho_2,\quad
V_{2}(\rho_1, \rho_2) = 
\frac{k_+\rho_1-k_-\rho_2}{\log(k_+\rho_1)-\log(k_-\rho_2)}
\end{align*}
and the energies
\[
\mathcal{E}_1(\rho_1)
=\int_{\Omega}\rho_1(\log(k_+\rho_1)-1)\,dx,\quad
\mathcal{E}_2(\rho_2)
=\int_{\Omega}\rho_2(\log(k_-\rho_2)-1)\,dx,
\]
the above PDE system can be recast into the following system version of the form \eqref{GD}:
\begin{subequations}
\label{rGD}    
\begin{align}
\partial_t\rho_1 =&\; \nabla\cdot\left(V_{1,1}(\rho_1)\nabla\frac{\delta\mathcal{E}_1}{\delta \rho}(\rho_1)\right)
 -V_2(\rho_1, \rho_2)
  \left(
 \frac{\delta\mathcal{E}_1}{\delta \rho}(\rho_1)-\frac{\delta\mathcal{E}_2}{\delta \rho}(\rho_2)\right),\\
 \partial_t\rho_2 =&\; \nabla\cdot\left(V_{1,2}(\rho_2)\nabla\frac{\delta\mathcal{E}_2}{\delta \rho}(\rho_2)\right)
 +V_2(\rho_1, \rho_2)
  \left(
 \frac{\delta\mathcal{E}_1}{\delta \rho}(\rho_1)-\frac{\delta\mathcal{E}_2}{\delta \rho}(\rho_2)\right),
\end{align}
\end{subequations}
which can then be discretized using a similar variational time implicit scheme as \eqref{G-JKO}.
Here the system is called a {\it strongly reversible reaction-diffusion system} when $k_+=k_->0$, 
is called a {\it reversible reaction-diffusion system with detailed balance} when we allow the two positive reaction rates to be different $k_+\not=k_->0$, and is called an {\it irreversible reaction-diffusion system} when the backward reaction rate is zero $k_-=0$; see more detailed in \cite{Glitzky13,LiuWangWang21a,Mielke11}. Our framework does not directly work for irreversible reaction-diffusion systems as they do not satisfy an energy dissipation law and can not be formulated back to the form \eqref{rGD}.
However, we can approximate an irreversible reaction-diffusion system using a reversible one by using a very small backward reaction rate (see, e.g., \cite{Liang22GS}), and then solve the reversible system using our formulation. 

This paper adopts the augmented Lagrangian (ALG2) optimization method with  high-order spatial finite element discretizations to solve the variational problem \eqref{G-JKO}. Using finite element spatial discretization, we also develop a point-wise update in the optimization step of computing variational problem \eqref{G-JKO}. In this sense, we obtain a high-order spatial discretization scheme in finding the ground state, which is the minimizer of functional $\mathcal{E}$. In this iterative procedure, assuming that the optimization step finds a global minimizer, the Lyapunov functional $\mathcal{E}$ is guaranteed to decay for any large time step sizes. 

Computational optimal transport and mean field control/games have been widely investigated in \cite{achdou2012iterative,benamou1999numerical,benamou2014augmented,LiuMaimaitiyiming2023_dynamica, papadakis2014optimal,peyre2019computational,ShenXu2021_unconditionallya}. For example, generalized JKO schemes of Wasserstein gradient flows with first-order time accuracy have been studied in \cite{CancesGallouetTodeschi2020_variational,LiWang22,GallouetMonsaingeon2017_jko,LiLuWang20,LiuWangWang21a}. Semi-discretizations of JKO-type schemes have been used in \cite{ChengShen2020_global}. The Lagrangian  type JKO schemes have been investigated in \cite{Carrillo2018,Carrillo2016,liu2020lagrangian}. 
It is also worth mentioning that there are methods for high-order time discretizations of gradient flows \cite{GongZhaoWang2020_arbitrarily}. 
Meanwhile, generalized optimal transport metric spaces have recently been introduced in \cite{CARRILLO20101273,ChowHuangLiZhou2012_fokkerplanckc,Erbar2016_gradient,Maas2011_gradientb,Mielke11}. 
Study of 
% This is an active area to 
% study 
conservative and dissipative operators in non-equilibrium thermodynamics \cite{OnsagerMachlup1953_fluctuations, OttingerGrmela1997_dynamics, ZhuHongYangYong2015_conservationdissipation} is an active research area. 
However, there are limited JKO-type computational results for reaction-diffusion systems. We specifically mention the recently introduced variational operator splitting schemes \cite{LiuWangWang21a,LiuWangWang22,LiuWangWang22x} for reversible reaction diffusion systems using the energetic variational framework \cite{Giga2017, liu2009introduction}. We note that generalized JKO schemes are examples of mean field control (MFC) problems \cite{benamou2014augmented,lasry2007mean}, which design optimal control/optimization problems for general initial value evolutionary equations not limited to gradient flows. Computation and modeling studies of MFCs have been conducted in controlling reaction-diffusion equations \cite{LiLeeOsher22} and conservation laws \cite{li2021controlling, li2022controlling} with applications in pandemics modeling \cite{lee2022mean,lee2021controlling}. Compared to the above approaches, we apply high-order spatial schemes in computing generalized JKO schemes towards initial value gradient flows. We adopt the first-order optimization method, the augmented Lagrangian method (ALG2), to implement the variational time implicit schemes for two and four species-reversible reaction-diffusion systems.  

This paper is organized as follows. We review some concepts of gradient flows, time implicit schemes, and their first-order optimization methods ALG2 in section \ref{sec:formulation}. 
Several examples of dynamics, including Wasserstein gradient flows, Fisher--Kolmogorov-Petrovskii-Piskunov (KPP) equation, and reversible reaction-diffusion systems, are presented in section \ref{sec:model}. We then present a high-order finite element method and derive all implementation details of the optimization algorithm ALG2 in section \ref{sec:alg}.
% including the point-wise optimization update. 
% Some implementation details of the variational time implicit scheme and the point-wise update of ALG2 are discussed.
Numerical examples are presented for two-dimensional Wasserstein gradient flows of linear, interaction, and potential energies, Fisher-KPP equation, and reversible two and four-species reaction-diffusion systems in section \ref{sec:num}.  

\section{Optimal transport type gradient flows, generalized time implicit schemes, and first-order optimization methods }
\label{sec:formulation}
This section reviews generalized gradient flows and their variational implicit schemes in metric spaces. We also discuss a one-step time discretization relaxation of variational implicit schemes for generalized gradient flows. Entropy dissipation properties of variational implicit schemes are introduced. 
% We illustrate a class of first-order methods, such as augmented Lagrangian methods (ALG2), to compute variational implicit schemes. 
We then present the augmented Lagrangian method (ALG2) as the optimization solver to compute the variational implicit schemes. 

\subsection{Optimal transport type gradient flows}
In this subsection, we formally review generalized optimal transport gradient flows \cite{Carrillo2016,Erbar2016_gradient,Mielke11}. This is known as the Onsager gradient flow \cite{doi2011onsager}. We next discuss a class of variational schemes to compute implicit-in-time solutions of gradient flows. 

\subsubsection{Gradient flows and entropy dissipations}
Consider an initial value equation
\begin{equation}\label{dissipative}
\begin{split}
&\partial_t \rho(x,t)=\nabla\cdot(V_1(\rho(x,t))\nabla\frac{\delta}{\delta \rho}\mathcal{E}(\rho)(x,t))-V_2(\rho(x,t))\frac{\delta}{\delta\rho}\mathcal{E}(\rho)(x,t), \quad t\in [0, \infty)\\
&\rho(x,0)=\rho^0(x). 
\end{split}
\end{equation}
Here $x\in \Omega\subset\mathbb{R}^d$, $\Omega$ is a spatial domain with periodic boundary condition or Neumann boundary condition (detailed in later sections), $\rho\colon \Omega \times \mathbb{R}_+\rightarrow \mathbb{R}$ is a scalar non-negative density function satisfying
\begin{equation*}
   \rho(\cdot, t) \in  \mathcal{M}=\Big\{\rho\colon \Omega \rightarrow\mathbb{R}\colon \rho(x,t)\geq 0\Big\},
   \end{equation*}
   for any time $t$, 
 $\mathcal{E}\colon \mathcal{M}\rightarrow\mathbb{R}$ is an energy functional, $V_1, V_2\colon \mathbb{R}\rightarrow\mathbb{R}_+$ are positive mobility functions, $\frac{\delta}{\delta\rho}$ is the  first variation operator in $L^2$ space, and $\rho^0\in\mathcal{M}$ is an initial condition. Equation \eqref{dissipative} forms a class of equations, including Wasserstein gradient flows and the Fisher--KPP equation \cite{Filippo, fisher, kpp}. 
 % Similar formulations also hold for vector values of densities, in which equation \eqref{dissipative} satisfies reversible reaction-diffusion systems with detailed balance. 
 Detailed examples of $V_1$, $V_2$, and $\mathcal{E}$ are provided in the next section, where we also discuss the extension of \eqref{dissipative} to reaction-diffusion systems.

Equation \eqref{dissipative} is purely dissipative. Denote $\rho(\cdot, t)$ as the solution of the PDE \eqref{dissipative}, then the energy functional $\mathcal{E}$ is a Lyapunov functional. In other words, the first-time derivative of the energy functional $\mathcal{E}$ is nonnegative, satisfying
\begin{equation}
\label{ener-law}
\begin{split}
    &\frac{d}{dt}\mathcal{E}(\rho(\cdot, t))\\
    =&-\int_\Omega\Big[\|\nabla\frac{\delta}{\delta \rho}\mathcal{E}(\rho)(x,t)\|^2V_1(\rho(x,t)) + |\frac{\delta}{\delta \rho}\mathcal{E}(\rho)(x,t)|^2V_2(\rho(x,t))\Big]dx\leq  0 , 
    \end{split}
\end{equation}
where we use the fact that $V_1(\rho)\geq 0$, and $V_2(\rho)\geq 0$ in the above inequality. 
\subsubsection{Metric operators and Distances}
The dissipation of the energy functional also induces a metric function in space $\mathcal{M}$, which further defines distances between two densities $\rho^0, \rho^1\in \mathcal{M}$. This distance designs an implicit time variational problem for computing the gradient flow in metric spaces.
% , known as the proximal operator in metric spaces. 
See details among optimal transport type gradient flows, distances, and mean-field control problems in \cite{2005_gradienta, LiLeeOsher22,Mielke11}. 

We directly present generalized optimal transport type distances and the time implicit schemes below for simplicity of discussion. 

\noindent\textbf{Definition}: {\em Distance functional.} Define a distance functional $\mathrm{Dist}_{V_1,V_2}\colon \mathcal{M}\times \mathcal{M}\rightarrow\mathbb{R}_+$ as below. Consider the following optimal control problem: 
\begin{subequations}\label{Dis}    \begin{equation}\label{Dis1}
\begin{split}
    &\mathrm{Dist}_{V_1, V_2}(\rho^0, \rho^1)^2\\
    :=&\inf_{\rho, v_1, v_2}\quad\int_0^1\int_\Omega \Big[\|v_1(x,\tau)\|^2 V_1(\rho(x,\tau))+ |v_2(x,\tau)|^2V_2(\rho(x,\tau))\Big]dxd\tau, 
\end{split}
\end{equation}
where the infimum is taken among $\rho\colon \Omega\times [0, 1]\rightarrow\mathbb{R}_+$,  $v_1, v_2\colon \Omega\times [0,1]\rightarrow\mathbb{R}^d$, such that $\rho$ satisfies a reaction-diffusion type equation with drift vector field $v_1$, drift mobility $V_1$, reaction rate $v_2$, reaction mobility $V_2$, connecting initial and terminal densities $\rho^0$, $\rho^1$: 
\begin{equation}\label{Dis2}
\left\{\begin{aligned}
&\partial_\tau \rho(x,\tau) + \nabla\cdot( V_1(\rho(x,\tau)) v_1(x,\tau))=V_2(\rho(x,\tau))v_2(x,\tau),\quad \tau\in [0,1],\\
&\rho(x,0)=\rho^0(x),\quad \rho(x,1)=\rho^1(x).
\end{aligned}\right.
\end{equation}
\end{subequations}

Variational problem \eqref{Dis} is a generalized Benamou-Brenier formula \cite{benamou2000computational}, where they consider $V_1(\rho)=\rho$, $V_2(\rho)=0$. One common practice is the following change of variable formula, which leads to a linear constraint optimization problem. Denote a moment vector function $m\colon \Omega\times[ 0,1]\rightarrow\mathbb{R}^d$ and a source function $s\colon \Omega\times[0,1]\rightarrow\mathbb{R}$, such that 
\begin{equation*}    m(x,\tau)=V_1(\rho(x,\tau))v_1(x,\tau), \quad s(x,\tau)=V_2(\rho(x,\tau))v_2(x,\tau). 
\end{equation*}
Using variables $m$, $s$, variational problem \eqref{Dis} satisfies
\begin{equation*}
    \mathrm{Dist}_{V_1, V_2}(\rho^0, \rho^1)^2
    :=\inf_{\rho, m, s}\quad\int_0^1\int_\Omega \Big[\frac{\|m(x,\tau)\|^2} {V_1(\rho(x,\tau))}+ \frac{|s(x,\tau)|^2}{V_2(\rho(x,\tau))}\Big]dxd\tau, 
\end{equation*}
such that
\begin{equation*}
\partial_\tau \rho(x,\tau) + \nabla\cdot m(x,\tau) =s(x,\tau),\quad \rho(x,0)=\rho^0(x),\quad \rho(x,1)=\rho^1(x).
\end{equation*}

\subsubsection{Variational time implicit schemes and properties}

We next design a variational implicit-in-time scheme to update gradient flow \eqref{dissipative} iteratively.  

\noindent\textbf{Definition}: {\em Variational time implicit scheme.}
Denote $\Delta t>0$ as a time step size. Consider the scheme below:
\begin{equation}\label{mms}
\begin{split}
\rho^{n}=&\arg\min_{\rho\in\mathcal{M}}\quad\frac{1}{2\Delta t}\mathrm{Dist}_{V_1, V_2}(\rho^{n-1}, \rho)^2+ \mathcal{E}(\rho),
\end{split}
\end{equation}
where  $\mathrm{Dist}_{V_1, V_2}(\rho^{n-1}, \rho)^2$ is the distance functional defined in \eqref{Dis} between current density $\rho$ and previous step density $\rho^{n-1}$. 
After suitable time rescaling, one can show that the minimization scheme \eqref{mms} requires solving the following optimal control problem:
\begin{subequations}\label{GJKO}
\begin{equation}\label{GJKO1}
\begin{split}
% \rho^{n}=&\arg\min
\inf_{\rho_{\Delta t}, \rho, m, s}\quad
\underbrace{\frac{1}{2}\int_0^{\Delta t}\Big[\int_\Omega \frac{\|m(x,\tau)\|^2} {V_1(\rho(x,\tau))}+ \frac{|s(x,\tau)|^2}{V_2(\rho(x,\tau))}\Big]dxd\tau}_{=\frac{1}{2\Delta t}\mathrm{Dist}_{V_1, V_2}(\rho^{n-1}, \rho)^2}
+ \mathcal{E}(\rho_{\Delta t}),
\end{split}
\end{equation}
such that
\begin{align}\label{GJKO2}
&\;\partial_\tau \rho(x,\tau) + \nabla\cdot m(x,\tau) =s(x,\tau),\quad \tau\in [0, \Delta t],\\
&\;\rho(x,0)=\rho^{n-1}(x), \quad
\rho(x,\Delta t)=\rho_{\Delta t}(x).
\end{align}
\end{subequations}
The next step solution $\rho^n$ is the density minimizer of \eqref{GJKO}: 
\[\rho^n(x)=\rho_{\Delta t}(x).\]
We demonstrate that the variational scheme \eqref{mms}
is a first-order accurate implicit in time scheme, i.e., 
% forms a first-order implicit time scheme for PDE \eqref{dissipative}, i.e., 
% \begin{theorem}
% \label{first-order}
% Let $\rho^n\in \mathcal{M}$ be the solution to \eqref{GJKO}, then
\begin{equation*}
 \frac{\rho^n-\rho^{n-1}}{\Delta t}=\nabla\cdot(V_1(\rho^n)\frac{\delta}{\delta\rho}\mathcal{E}(\rho^n)) -V_2(\rho^n)\frac{\delta}{\delta\rho}\mathcal{E}(\rho^n)+\mathcal{O}(\Delta t). 
\end{equation*}
% \end{theorem}
\begin{proof}
We write the minimization system of variational problem \eqref{GJKO}. Denote $\Phi(x,\tau)\in\mathbb{R}$, $\tau\in [0, \Delta t]$, as the Lagrange multiplier. The optimal condition of variational problem \eqref{GJKO} satisfies the following saddle point problem:
\begin{equation}
\label{sdsd}
    \inf_{\rho_{\Delta t},\rho, m, s}\sup_{\Phi}~\mathcal{L}( \rho_{\Delta t},\rho, m, s, \Phi),
\end{equation}
where 
\begin{equation*}
\begin{split}
 \mathcal{L}(\rho_{\Delta t},\rho, m, s, \Phi):=& \frac{1}{2}\int_0^{\Delta t}\int_\Omega \Big[\frac{\|m(x,\tau)\|^2} {V_1(\rho(x,\tau))}+ \frac{|s(x,\tau)|^2}{V_2(\rho(x,\tau))}\Big]dxd\tau
+ \mathcal{E}(\rho_{\Delta t}) \\ 
&+\int_0^{\Delta t}\int_\Omega \Phi(x,\tau)\Big(\partial_\tau \rho(x,\tau) + \nabla\cdot m(x,\tau)-s(x,\tau)\Big) dxd\tau
\end{split}
\end{equation*}
We note that from integration by parts, 
\begin{align*}
\int_0^{\Delta t}\int_\Omega \Phi(x,\tau)\partial_\tau \rho(x,\tau) dxd\tau
= &\;
-\int_0^{\Delta t}\int_\Omega \partial_\tau\Phi(x,\tau) \rho(x,\tau) dxd\tau\\
&\;+ \int_\Omega \Phi(x,\Delta t) \rho_{\Delta t}(x) dx
-\int_\Omega \Phi(x,0) \rho^{n-1}(x) dx.
\end{align*}
By computing the saddle point of \eqref{sdsd}, we derive 
\begin{equation*}
\left\{\begin{aligned}
&\frac{\delta}{\delta \rho}\mathcal{L}=0,\quad \textrm{if $\rho>0$,} \\
&\frac{\delta}{\delta m}\mathcal{L}=0,\\
&\frac{\delta}{\delta s}\mathcal{L}=0,\\
&\frac{\delta}{\delta \Phi}\mathcal{L}=0,\\
&\frac{\delta}{\delta \rho_{\Delta t}}\mathcal{L}=0,\\
\end{aligned}\right.\Rightarrow
\left\{\begin{aligned}
&-\frac{\|m\|^2}{2V_1(\rho)^2}V_1'(\rho)-\frac{|s|^2}{2V_2(\rho)^2}V_2'(\rho)-\partial_\tau\Phi=0,\quad\textrm{if $\rho>0$,}\\
&\frac{m}{V_1(\rho)}-\nabla\Phi=0,\\
&\frac{s}{V_2(\rho)}-\Phi=0,\\
&\partial_\tau\rho+\nabla\cdot m=s, \\
&\Phi(x,\Delta t)+\frac{\delta}{\delta\rho_{\Delta t}}\mathcal{E}(\rho_{\Delta t})=0.
\end{aligned}\right.
\end{equation*}
Thus we obtain a minimization system: 
\begin{equation*}
\left\{\begin{aligned}
&\partial_\tau\rho(x,\tau)+\nabla\cdot(V_1(\rho(x,\tau))\nabla \Phi(x,\tau))=V_2(\rho(x,\tau))\Phi(x,\tau),\\
&\rho(0,x)=\rho^{n-1}(x),\quad\Phi(x,\Delta t)=-\frac{\delta}{\delta \rho}\mathcal{E}(\rho)(x),
    \end{aligned}\right.
\end{equation*}
where $\Phi$ satisfies the Hamilton-Jacobi-type equation when $\rho(x,\tau)>0$, such that
\begin{equation*}
\partial_\tau\Phi(x,\tau)+\frac{1}{2}\|\nabla \Phi(x,\tau)\|^2V_1'(\rho(x,\tau))+|\Phi(x,\tau)|^2V_2'(\rho(x,\tau))=0.    
\end{equation*}
We approximate the equation of $\rho(x,\tau)$ at $\tau=\Delta t$: 
\begin{equation*}
\begin{split}
   \rho(x,\Delta t)=&\rho(x,0)-\Delta t\Big[\nabla\cdot(V_1(\rho(x,\tau))\nabla \Phi(x,\tau))-V_2(\rho(x,\tau))\Phi(x,\tau)\Big]|_{\tau=\Delta t}+o(\Delta t)\\
   =&\rho(x,0)+\Delta t\Big[\nabla\cdot(V_1(\rho^n(x))\frac{\delta}{\delta\rho}\mathcal{E}(\rho^n)(x)) -V_2(\rho^n(x))\frac{\delta}{\delta\rho}\mathcal{E}(\rho^n)(x)\Big]+{o}(\Delta t),
   \end{split}
\end{equation*}
where we denote $\rho^n(x)=\rho(x,\Delta t)$.
This finishes the proof. 
\end{proof}
In fact, for first-order implicit time accuracy, one can use the one-step approximated minimization scheme. In other words, we only use a local time approximation of distance functional to compute the implicit time scheme. 

\noindent\textbf{Definition}: {\em One-step relaxation of variational time implicit scheme.} Consider 
\begin{subequations}\label{OJKO}
    \begin{equation}\label{OJKO1}
  % \rho^{n}=\arg\min_{\rho\in\mathcal{M}} 
  \inf_{\rho, m, s}
  \quad\underbrace{\frac{1}{2\Delta t}\int_\Omega\Big[ \frac{\|m(x)\|^2}{V_1(\rho(x))}+\frac{|s(x)|^2}{V_2(\rho(x))} \Big]dx}_{\approx \frac{1}{2\Delta t}\mathrm{Dist}_{V_1, V_2}(\rho, \rho^{n-1})^2}+\mathcal{E}(\rho),   
\end{equation}
where the minimization is over all functions $m\colon \Omega\rightarrow \mathbb{R}^d$, $s\colon \Omega\rightarrow\mathbb{R}$, and $\rho\colon \Omega\rightarrow \mathbb{R}_+$, such that 
\begin{equation}\label{OJKO2}
  \rho(x)-\rho^{n-1}(x)+\nabla\cdot m(x)=s(x).   
\end{equation}
Denote the next step solution $\rho^n$ as  the density minimizer of \eqref{OJKO}.
\end{subequations}

We also demonstrate that the variational scheme \eqref{OJKO} forms a first-order implicit time scheme for the PDE \eqref{dissipative}.
\begin{proof}
The proof is similar to the one in \eqref{GJKO}. 
Denote $\Phi(x)$ as the Lagrange multiplier.
% We compute the $\Delta t$ multiplying the objective functional in \eqref{OJKO1}. 
The optimal condition of the variational problem \eqref{OJKO} satisfies the following saddle point problem:
\begin{equation*}
    \inf_{\rho, m, s}\sup_{\Phi}\quad\mathcal{L}(\rho, m, s, \Phi),
\end{equation*}
where 
\begin{equation*}
\begin{split}
 \mathcal{L}(\rho, m, s, \Phi):=& \frac{1}{2}\int_\Omega\Big[ \frac{\|m(x)\|^2} {V_1(\rho(x))}+ \frac{|s(x)|^2}{V_2(\rho(x))}\Big]dx
+\Delta t \mathcal{E}(\rho) \\ 
&+\int_\Omega \Phi(x)\Big(\rho(x)-\rho^{n-1}(x) + \nabla\cdot m(x)-s(x)\Big) dx. 
\end{split}
\end{equation*}
By computing saddle point of the above system, we derive 
\begin{equation*}
\left\{\begin{aligned}
&\frac{\delta}{\delta \rho}\mathcal{L}=0,\\
&\frac{\delta}{\delta m}\mathcal{L}=0,\\
&\frac{\delta}{\delta s}\mathcal{L}=0,\\
&\frac{\delta}{\delta \Phi}\mathcal{L}=0,
\end{aligned}\right.\Rightarrow
\left\{\begin{aligned}
&-\Big[\frac{\|m\|^2}{2V_1(\rho)^2}V_1'(\rho)+\frac{|s|^2}{2V_2(\rho)^2}V_2'(\rho)\Big]+\Delta t\frac{\delta}{\delta\rho}\mathcal{E}(\rho)+\Phi=0,\\
&\frac{m}{V_1(\rho)}-\nabla\Phi=0,\\
&\frac{s}{V_2(\rho)}-\Phi=0,\\
&\rho-\rho^{n-1}+\nabla\cdot m=s.
\end{aligned}\right.
\end{equation*}
One can check that $\Phi=-\Delta t\frac{\delta}{\delta \rho}\mathcal{E}(\rho)+o(\Delta t)$. Thus 
\begin{equation*}
 \frac{\rho^n-\rho^{n-1}}{\Delta t}=\nabla\cdot(V_1(\rho^n)\frac{\delta}{\delta\rho}\mathcal{E}(\rho^n)) -V_2(\rho^n)\frac{\delta}{\delta\rho}\mathcal{E}(\rho^n)+\mathcal{O}(\Delta t).
\end{equation*}
Here $\rho^n=\rho$ is the density minimizer. This finishes the proof. 
\end{proof}

We remark that solving the variational problem \eqref{OJKO} is simpler than optimizing \eqref{GJKO}, since \eqref{OJKO} only involves a local time distance approximation;  see \cite{LiLuWang20, CancesGallouetTodeschi2020_variational}. We also present some properties of the implicit variational scheme \eqref{OJKO}. The algorithm satisfies the entropy dissipation property for any step size $\Delta t\geq 0$.  
\begin{proposition}[Time implicit scheme entropy dissipation]
Denote the solution $\{\rho^n\}_{n\in \mathbb{N}}$ solving the variational implicit scheme \eqref{OJKO}. For any stepsize $\Delta t\geq 0$, we have
\begin{equation*}
   \mathcal{E}(\rho^n)\leq \mathcal{E}(\rho^{n-1}),  \qquad\textrm{for $n\in \mathbb{N}_+$. }
\end{equation*}
\end{proposition}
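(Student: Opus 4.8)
The plan is to exploit the fact that $\rho^n$ (together with its optimal flux $m^n$ and source $s^n$) is a \emph{minimizer} of the objective in \eqref{OJKO}, and to test that objective against an explicit admissible competitor built from the previous iterate. Before doing so I would dispense with the degenerate endpoint $\Delta t = 0$: there the coefficient $\frac{1}{2\Delta t}$ forces any configuration of finite cost to satisfy $m \equiv 0$ and $s\equiv 0$, so the constraint \eqref{OJKO2} collapses to $\rho = \rho^{n-1}$, hence $\rho^n = \rho^{n-1}$ and the claimed inequality holds with equality. For the rest assume $\Delta t>0$.

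The key observation is that the triple $(\rho, m, s) = (\rho^{n-1}, 0, 0)$ is feasible for \eqref{OJKO}. Indeed $\rho^{n-1}\in\mathcal{M}$ (nonnegativity is preserved along the iteration, or is part of the standing hypotheses on the data), and the constraint \eqref{OJKO2} evaluated at this triple reads $\rho^{n-1} - \rho^{n-1} + \nabla\cdot 0 = 0$, which is satisfied. For this competitor the transport/reaction cost integral vanishes identically, so the value of the objective in \eqref{OJKO1} at $(\rho^{n-1},0,0)$ is exactly $\mathcal{E}(\rho^{n-1})$. Comparing the minimizer with this competitor gives
\[
\frac{1}{2\Delta t}\int_\Omega\Big[\frac{\|m^n(x)\|^2}{V_1(\rho^n(x))} + \frac{|s^n(x)|^2}{V_2(\rho^n(x))}\Big]\,dx + \mathcal{E}(\rho^n) \;\le\; \mathcal{E}(\rho^{n-1}).
\]
Finally, since $V_1,V_2>0$ pointwise, the integral term on the left-hand side is nonnegative; discarding it only strengthens the inequality and yields $\mathcal{E}(\rho^n)\le\mathcal{E}(\rho^{n-1})$, which is the assertion.

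There is no genuine obstacle here once the zero-flux competitor is spotted; the only points that require a line of justification are (i) the admissibility of $(\rho^{n-1},0,0)$, which rests on $\rho^{n-1}\ge 0$, and (ii) the $\Delta t=0$ edge case noted above. I would additionally remark that the argument uses only that $\rho^n$ attains a value no larger than that of the competitor, so it extends verbatim to the situation where \eqref{OJKO} is solved approximately (as long as the returned iterate beats $\mathcal{E}(\rho^{n-1})$), and that the decay is strict unless the optimal flux pair $(m^n,s^n)$ vanishes, i.e.\ unless $\rho^{n-1}$ is already a stationary state of \eqref{dissipative}.
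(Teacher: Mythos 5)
Your proof is correct and uses exactly the paper's argument: compare the minimizer against the feasible competitor $(\rho^{n-1},0,0)$, whose objective value is $\mathcal{E}(\rho^{n-1})$, and drop the nonnegative transport/reaction cost of the optimizer. The extra remarks on the $\Delta t=0$ endpoint and on approximate minimizers are sensible additions but do not change the substance.
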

\begin{proof}
Denote the objective functional \eqref{OJKO1} as 
\begin{equation}
   \mathcal{F}(\rho, m, s)= \frac{1}{2\Delta t}\int_\Omega\Big[ \frac{\|m(x)\|^2}{V_1(\rho(x))}+\frac{|s(x)|^2}{V_2(\rho(x))} \Big]dx+\mathcal{E}(\rho) . 
\end{equation}
Since $(\rho^{n-1}, m=0, s=0)$ is a feasible point satisfying the constraint \eqref{OJKO2}, and $(\rho^n, m^*, s^*)$ is an optimal solution of \eqref{OJKO}, we have 
\begin{equation*}
\mathcal{E}(\rho^n)\leq \mathcal{F}(\rho^n, m^*, s^*)\leq \mathcal{F}(\rho^{n-1}, 0 ,0)=\mathcal{E}(\rho^{n-1}),
\end{equation*}
where we use the fact that
\begin{equation*}
    \mathcal{F}(\rho^n, m^*, s^*)=\mathcal{E}(\rho^n)+\frac{1}{2\Delta t}\int_\Omega\Big[\frac{\|m^*(x)\|^2}{V_1(\rho^n(x))}+\frac{|s^*(x)|^2}{V_2(\rho^n(x))} \Big]dx\geq \mathcal{E}(\rho^n). 
\end{equation*}
We finish the proof. 
\end{proof}

We also remark that there are issues of convexity in computing minimizers of the variational problem \eqref{OJKO}. If $V_1$ and $V_2$ are concave w.r.t. $\rho$, then the minimization problem \eqref{OJKO} is always convex for any positive step size $\Delta t$. In general, this fact may be lost for general mobility functions $V_1$ and $V_2$. In computations, we still apply the first-order optimization algorithm to compute the variational problem \eqref{OJKO}, where we suggest a small stepsize $\Delta t$ in the iterative update. 
\subsection{The abstract ALG2 algorithm}
In this subsection, we formulate saddle point problems  to calculate the variational time implicit schemes \eqref{OJKO};
% It requires solving a saddle point optimization problem, for which we use the classical ALG2 algorithm \cite{FortinBook}; 
see also \cite{FortinBook, benamou2000computational}. 
%%We shall present four models in later sections to illustrate the proposed algorithm. 

We present the general form of the augmented Lagrangian (ALG2) algorithm \cite{FortinBook} for the following saddle point system:
\begin{align}
\label{saddle-pt}
    \inf_{\bmu} 
    \sup_{\Phi} F(\bmu)-G(\Phi)-(\bmu, \mathcal{D}\Phi)_{\Omega},
\end{align}
where $\mathcal{D}(\Phi)$ is a linear differential operator for $\Phi$, 
and $(\cdot, \cdot)_{\Omega}$ stands for the $L^2$-inner product on the domain $\Omega$. For the problem \eqref{OJKO}, we choose 
\begin{equation*}
  \bmu=(\rho, m, s),
\end{equation*}
with 
\begin{equation*}
    F(\bmu)=\frac{1}{2}\int_\Omega \Big[\frac{\|m\|^2}{V_1(\rho)}+\frac{|s|^2}{V_2(\rho)}\Big]dx+\Delta t\mathcal{E}(\rho),  \quad     G(\Phi)=\int_\Omega \rho^{n-1}\Phi\,dx,
\end{equation*}
and 
\begin{equation*}
\mathcal{D}\Phi=(-\Phi, \nabla\Phi, \Phi). 
% (\bmu, \mathcal{D}\Phi)_{\Omega}=\int_\Omega \Big[-\rho\Phi+ (m,\nabla_x\Phi)+s \Phi \Big]dx. 
\end{equation*}

%All four problems \eqref{bb-JKOs}, \eqref{bb-JKOs-rd}, \eqref{bb-JKO-rds}, 
%and \eqref{bb-JKO-rds2} fit into this framework.

The  algorithm starts with the dual formulation of the saddle-point problem \eqref{saddle-pt}:
\begin{align}
\label{sd-dual}
    \sup_{\bmu} 
    \inf_{\Phi, \bmu^*}
   F^*(\bmu^*)
+G(\Phi)+(\bmu, \mathcal{D}\Phi-\bmu^*)_{\Omega},
\end{align}
where 
$
   F^*(\bmu^*)
=\sup_{\bmu}(\bmu, \bmu^*)_{\Omega}-F(\bmu)
$
is the Legendre transform.
The saddle point of the above system is equivalent to the saddle point of the following augmented Lagrangian
form:
\begin{align}
\label{sd-aug}
    \sup_{\bmu} 
    \inf_{\Phi, \bmu^*}
L_r(\Phi, \bmu, \bmu^*),
\end{align}
where the augmented Lagrangian
\[
L_r(\Phi, \bmu, \bmu^*):= 
   F^*(\bmu^*)
+G(\Phi)+(\bmu, \mathcal{D}\Phi-\bmu^*)_{\Omega}
+\frac{r}{2}(\mathcal{D}\Phi-\bmu^*,
\mathcal{D}\Phi-\bmu^*)_{\Omega},
\]
in which  $r$ is a positive parameter.

The ALG2 solves the optimization problem \eqref{sd-aug} in a splitting fashion. One iteration  contains the following three steps.
\begin{algorithm}[H]
\caption{One iteration of ALG2 algorithm for variational implicit scheme \eqref{sd-aug}.}
\label{alg:1}
\begin{algorithmic}

\STATE $\bullet$ Step A: update $\Phi$. 
Minimize $L_{r}(\Phi, \bmu, \bmu^*)$ with respect to the first argument by solving the elliptic problem:
Find $\Phi^{\ell}$ such that
it solves 
\begin{align*}
 \inf_{\Phi}L_{r}(\Phi, \bmu^{\ell-1},\bmu^{*,\ell-1}).
\end{align*}
% This step is  a simple linear, constant-coefficient
% reaction-diffusion problem.

\STATE $\bullet$ Step B: update $\bmu^{*}$.
 Minimize $L_{r}(\Phi, \bmu, \bmu^*)$ with respect to the last argument by solving the nonlinear problem:
Find $\bmu^{*,\ell}$ such that
it solves
\begin{align*}
 \inf_{\bmu^{*}}L_{r}(\Phi^{\ell}, \bmu^{\ell-1}, \bmu^{*}).
\end{align*}

\STATE $\bullet$ Step C: update $\bmu$. This is a simple pointwise update for the Lagrange multiplier $\bmu$ :
\begin{align}
\label{stepC}
\bmu^\ell = \bmu^{\ell-1}+r(\mathcal{D}\Phi^\ell-\bmu^{*,\ell}).
\end{align}
\end{algorithmic}
\end{algorithm}
We note that the key success of the ALG2 algorithm \ref{alg:1} is that
Step A is a simple linear reaction-diffusion equation solve, while the nonlinear Step B can be efficiently solved in a point-wise fashion, provided a good spatial discretization is used for the discretization variables; see Algorithm \ref{alg:2} below. 
We note that for the system case, further splitting in Step A/B for each component calculation will be applied
to further save the computational cost; see Algorithm \ref{alg:3} below.
We will present details of the implementation in Section \ref{sec:alg} where the high-order spatial discretization is introduced.
The error in the Lagrange multipliers in two consecutive iterations $\bmu^{\ell}-\bmu^{\ell-1}$
can be used to monitor the convergence of the ALG2 algorithm.
Typically, a couple of hundred ALG iterations is sufficient for time accuracy. We take 200 ALG iterations in all our numerical results reported in 
Section \ref{sec:num}.

%%{\em Some details on $L$ and variational problem in ALG2.}

\section{Examples: Wasserstein gradient flow, reaction-diffusion equations, and reversible reaction-diffusion systems}\label{sec:model}

This section presents examples of dissipative dynamic systems that fit in the framework of the previous section: Wasserstein gradient flows, scalar reaction-diffusion equations, 
and reversible reaction-diffusion systems. 
% We write their associated generalized JKO schemes in detail.  
\subsection{Wasserestein gradient flow}
We consider the following $L^2$-Wasserstein gradient flow
for a time-dependent probability density $\rho:\Omega\times \mathbb
R_+\rightarrow \mathbb
R_+$ on a domain $\Omega\subset \mathbb{R}^d$,
\begin{align}
\label{wg}
\partial_t \rho = \nabla\cdot\left(\rho\nabla \frac{\delta}{\delta \rho}\mathcal{E}(\rho)\right),
\end{align}
subject to Neumann boundary conditions.
Typically, the energy functional $\mathcal{E}(\rho)$ takes the following form
\begin{equation}
\label{ener}
\mathcal{E}(\rho):=\int_{\Omega}\left[\alpha U_m(\rho(x))+\rho(x) V(x) + \frac12(W*\rho)(x)\rho(x)\right]  dx,
\end{equation}
where $\alpha\ge0$ is the diffusion coefficient, $U_m(\rho)$ is the diffusion term with 
\[
U_m(\rho)=
\begin{cases}
\rho\log(\rho) &\text{ if }m=1,\\[.2ex]
\frac{\rho^m}{m-1} &\text{ if }m>1,
\end{cases}
\]
$\rho V$ is the drift term with drift potential $V$, and $\frac12(W *\rho)\rho$ is the aggregation term with 
the convolution 
\[
(W*\rho)(x):=\int_{\Omega}W(x-y)\rho(y)\, dy,
\]
in which $W(\cdot)$ is the symmetric interaction kernel.
Its variational derivative is 
\begin{align}
\label{var-d}
\frac{\delta}{\delta \rho} \mathcal{E}=\alpha U_m'(\rho)+V+W*\rho.
\end{align}
The equation \eqref{wg} is mass conserving, positivity preserving, and  satisfies the energy dissipation law
\eqref{ener-law} with 
$V_1(\rho)=\rho$ and $V_2(\rho)=0$.

This model is a special case of \eqref{dissipative}  with $V_1(\rho)=\rho$, $V_2(\rho)=0$, and 
energy functional $\mathcal{E}$ in \eqref{ener}. 
The corresponding one-step variational time implicit scheme \eqref{OJKO} is 
\begin{subequations}\label{OJKO-A}
    \begin{equation}\label{OJKO-A1}
\inf_{\rho, m} \quad
 \frac{1}{2\Delta t} \int_\Omega \frac{\|m(x)\|^2}{\rho(x)} dx
  % \underbrace{\frac{1}{2\Delta t}\int_\Omega\Big[ \frac{\|m(x)\|^2}{\rho(x)} \Big]dx}_{\approx \frac{1}{2\Delta t}\mathrm{Dist}_{W}(\rho, \rho^{n-1})^2}
  +\mathcal{E}(\rho),   
\end{equation}
where the minimization is over all functions $m\colon \Omega\rightarrow \mathbb{R}^d$, and $\rho\colon \Omega\rightarrow \mathbb{R}_+$, such that 
\begin{equation}\label{OJKO-A2}
  \rho(x)-\rho^{n-1}(x)+\nabla\cdot m(x)=0.   
\end{equation}
\end{subequations}
The next step solution $\rho^n$ is the density minimizer of \eqref{OJKO}, i.e., $\rho^n(x)=\rho(x)$.
% Here $\mathrm{Dist}_{W}(\rho^0, \rho^{1})$
% is the classical Wasserstein distance
Here the first term in \eqref{OJKO-A1}
is the one-step relaxation approximation of the classical Wasserstein distance in Benamou-Brenier's dynamic formulation \cite{benamou2000computational}, i.e., the distance in 
\eqref{Dis} with $V_1(\rho)=\rho$ and $V_2(\rho)=0$.
We note that such approximation was originally used in 
\cite{LiLuWang20, CancesGallouetTodeschi2020_variational}.

This problem is equivalent to finding the saddle point of \eqref{saddle-pt} in which $\bmu = (\rho, m)$, 
\begin{equation*}
    F(\bmu)=\int_\Omega \frac{\|m\|^2}{2\rho}dx+\Delta t\mathcal{E}(\rho),  \quad     G(\Phi)=\int_\Omega \rho^{n-1}\Phi\,dx,
\end{equation*}
and $\mathcal{D}\Phi= (-\Phi, \nabla\Phi)$, which can be solved using ALG2 Algorithm \ref{alg:1}
after a spatial discretization is used; see Section \ref{sec:alg}.

\subsection{Dissipative reaction-diffusion equation}
Adding a reaction term of  form $-V_2(\rho)\frac{\delta}{\delta \rho}\mathcal{E}$
with a non-negative mobility function $V_2(\rho)\ge0$ to the PDE \eqref{wg}, we get the following reaction-diffusion equation:
\begin{align}
\label{rd}
\partial_t \rho = \nabla\cdot\left(\rho\nabla \frac{\delta}{\delta \rho}\mathcal{E}\right)
-V_2(\rho)\frac{\delta \mathcal{E}}{\delta \rho},
\end{align}
which is again a special case of \eqref{dissipative}, with $V_1(\rho)=\rho$, and a general non-negative function $V_2(\rho)$.
% , and energy \eqref{ener}.
Hence, the corresponding one-step variational time implicit scheme \eqref{OJKO} is 
\begin{subequations}\label{OJKO-B}
    \begin{equation}\label{OJKO-B1}
\inf_{\rho, m, s} \quad
 \frac{1}{2\Delta t} \int_\Omega\Big[ \frac{\|m(x)\|^2}{\rho(x)} 
 +\frac{|s(x)|^2}{V_2(\rho(x))} 
 \Big]dx
  % \underbrace{\frac{1}{2\Delta t}\int_\Omega\Big[ \frac{\|m(x)\|^2}{\rho(x)} \Big]dx}_{\approx \frac{1}{2\Delta t}\mathrm{Dist}_{W}(\rho, \rho^{n-1})^2}
  +\mathcal{E}(\rho),   
\end{equation}
where the minimization is over all functions $m\colon \Omega\rightarrow \mathbb{R}^d$,
$s\colon \Omega\rightarrow \mathbb{R}$,
and $\rho\colon \Omega\rightarrow \mathbb{R}_+$, such that 
\begin{equation}\label{OJKO-B2}
  \rho(x)-\rho^{n-1}(x)+\nabla\cdot m(x)=s(x).   
\end{equation}
\end{subequations}
This is the saddle point of \eqref{saddle-pt} in which $\bmu = (\rho, m, s)$, 
\begin{equation*}
    F(\bmu)=\int_\Omega\Big[\frac{\|m\|^2}{2\rho}+\frac{|s|^2}{2V_2(\rho)}\Big]dx+\Delta t\mathcal{E}(\rho),  \quad     G(\Phi)=\int_\Omega \rho^{n-1}\Phi\,dx,
\end{equation*}
and $\mathcal{D}\Phi= (-\Phi, \nabla\Phi, \Phi)$.
% which can again be solved using ALG2 Algorithm~\ref{alg:1} after a spatial discretization is introduced.

We will postpone the introduction of a model with 
a more general $V_1(\rho)\not=\rho$ to Section \ref{sec:ts}, where a two-component reversible reaction-diffusion system with detailed balance is discussed.

Below we list three choices of 
$V_2(\rho)$ along with their corresponding energies that will be used in our numerical experiments:
% Here are some practical choices of $\eta(\rho)$
% and $\mathcal{E}(\rho)$ we use in the numerical results section:
\begin{itemize}
\item [(i)] 
% $\mathcal{E}(\rho)$ given in \eqref{ener} with 
$V_2(\rho) = c\,\rho^\gamma$ where $c\ge 0$ and $\gamma\in \mathbb{R}$, 
with a general $\mathcal{E}(\rho)$ given in \eqref{ener}.
Here $\gamma=1$ corresponds to the Wasserstein-Fisher-Rao metrics used in \cite{Chizat18,LieroMielkeSavare2016_optimal}, and $\gamma=0$ is related to unnormalized optimal transport \cite{LeeLaiLiOsher21}.
Both cases lead to a convex optimization problem 
\eqref{OJKO-B} when the energy is convex; see Remark \ref{rk:convex} below.
\item [(ii)] 
$V_2(\rho) = c\,\frac{\rho-1}{\log(\rho)}$ where $c\ge 0$ with a general $\mathcal{E}(\rho)$ given in \eqref{ener}.
This choice also leads to a convex optimization problem
for a convex energy.
\item [(iii)]
$V_2(\rho) =\frac{\rho(\rho-1)}{\alpha\log(\rho)}$, with energy 
$\mathcal{E}(\rho):=\int_{\Omega}\alpha\rho(x)(\log(\rho)-1)  dx,
$
where $\alpha>0$.
% $\frac{\delta \mathcal{E}(\rho)}{\delta \rho}=\alpha \rho(\log(\rho)-1)$, where $\alpha>0$.
This model is the following Fisher--KPP equation; see \cite[Example 7]{LiLeeOsher22}:
\begin{align}
\label{kpp}
\frac{\partial \rho}{\partial t}-\nabla\cdot(\alpha\nabla\rho)=\rho(1-\rho).
\end{align}
It, however, does not lead to a convex optimization problem.
\end{itemize}
% \end{remark}

\subsection{Strongly reversible reaction-diffusion systems}
Our next model deals with the system of strongly reversible reaction-diffusion equations \cite{Mielke11}.
We consider $M$ different chemical species 
$X_1,\dots, X_M$ reacting according to $R$ mass-action laws:
\begin{align}
\label{mal}
\alpha_1^pX_1+\cdots+\alpha_M^pX_M\xrightleftharpoons[k_-^p]{k_+^p}
\beta_1^pX_1+\cdots+\beta_M^pX_M,
\end{align}
where $p=1, \cdots, R$ is the number of possible reactions, 
$\bma^p=(\alpha_1^p,\cdots,\alpha_M^p), \bmb^p=(\beta_1^p,\cdots,\beta_M^p)\in\mathbb{N}_0^M$ are the vectors of the stoichiometric coefficients, and 
$k_+^p, k_-^p$ are the positive forward and backward reaction rates.
For simplicity, we restrict ourselves to the {\it strongly reversible} case where $k_+^p=k_-^p=k^p>0$
in this subsection.
The more general case of reversible reaction-diffusion systems with detailed balance that allows $k_+^p\not=k_-^p>0$ will be discussed in the next subsection. 

Combining the mass-action laws \eqref{mal} with (independent) isotropic linear diffusion 
with energy $\mathcal{E}_i(\rho_i)=\int_{\Omega}\rho_i(\log(\rho_i)-1)\, dx$
for each density $\rho_i$ of species $X_i$, we get the following reaction-diffusion system:
\begin{align}
\label{rds}
\partial_t\rho_i - \nabla\cdot\left(
% V_{1,i}(\rho_i)
\gamma_i\rho_i
\nabla\frac{\delta}{\delta \rho}\mathcal{E}_i(\rho_i)\right)
=&\; -\sum_{p=1}^Rk^p(\alpha_i^p-\beta^p_i)(\bmr^{\bma^p}-\bmr^{\bmb^p}),
\end{align}
for $1\le i\le M$, where 
$\bmr=(\rho_1, \cdots, \rho_M)$ and 
the multi-index notation 
$\bmr^{\bma^p}:=\prod_{i=1}^M\rho_i^{\alpha_i^p}$ is used.
Here the potential $\frac{\delta}{\delta \rho}\mathcal{E}_i(\rho_i)=\log(\rho_i)$ is simply the logarithm.

Next, we recast the above system \eqref{rds} back to a system version of the general dissipative form \eqref{dissipative}
using appropriate mobility functions.
We introduce the following function; see \cite{Mielke11}:
\begin{align}
\label{mob}
\ell(x,y)=\begin{cases}
\frac{x-y}{\log(x)-\log(y)}&\text{ for } x\not=y,\\[2ex]
y&\text{ for } x=y,
\end{cases}
\end{align}
and denote the following mobility functions:
\begin{subequations}
\label{mobs}
\begin{align}
\label{mobs-V1}
V_{1, i}(\rho_i)= &\; \gamma_i\rho_i, \quad\forall 1\le i\le M, \\
\label{mobs-V2}
V_{2,p}(\bmr) = &\;k^p\,\ell\left(\bmr^{\bma^p}, \bmr^{\bmb^p}\right),\quad
\forall 1\le p\le R.
\end{align}
\end{subequations}
Using these notations, it can be shown that \eqref{rds} is equivalent to 
\begin{align}
\label{rds-V1V2}
\partial_t\rho_i =&\;\nabla\cdot\left(
V_{1,i}(\rho_i)\nabla\frac{\delta}{\delta \rho}\mathcal{E}_i(\rho_i)\right) 
%  -\sum_{p=1}^RV_{2,p}(\bmr)\gamma^p_i\sum_{j=1}^M\gamma_j^p
% \frac{\delta}{\delta \rho}\mathcal{E}_i(\rho_i),
\nonumber \\
&\; -\sum_{p=1}^RV_{2,p}(\bmr)(\alpha_i^p-\beta^p_i)\sum_{j=1}^M(\alpha_j^p-\beta_j^p)
\frac{\delta}{\delta \rho}\mathcal{E}_i(\rho_i).
% \log(\rho_j).
\end{align}
% where $\gamma_i^p = \alpha_i^p-\beta_i^p$
It is now clear that the above system is purely dissipative as for the scalar case \eqref{dissipative}. That is, 
the first-time derivative of the total energy functional  is nonnegative and satisfies
\begin{equation}
\label{ener-law2}
\begin{split}
    \frac{d}{dt}\sum_{i=1}^M\mathcal{E}_i(\rho_i(\cdot, t))
    =&\;-\sum_{i=1}^M\int_\Omega\|\nabla\frac{\delta}{\delta \rho}\mathcal{E}_i(\rho)_i(x,t)\|^2V_{1,i}(\rho_i)\,dx \\
    &\;-\sum_{p=1}^R\int_\Omega\left|\sum_{j=1}^M(\alpha_j^p-\beta_j^p)
\frac{\delta}{\delta \rho}\mathcal{E}_i(\rho_i)\right|^2V_{2,p}(\bmr)\,dx.
    \end{split}
\end{equation}

% Multiplying the system \eqref{rds} with potential $\frac{\delta\mathcal{E}}{\delta \rho}(\rho_i)=\log(\rho_i)$, adding, 
% integrating over the domain and performing integration by parts for the diffusion terms, and using the property of logarithm, we get the following energy dissipation law:
% \begin{align*}
% % \label{ed-rds}
% \frac{d}{dt}\sum_{i=1}^M\mathcal{E}(\rho_i)=&\;-\int_{\Omega}\left(\sum_{i=1}^M\gamma_i\rho_i|\nabla\frac{\delta \mathcal{E}}{\delta\rho}(\rho_i)|^2\right)\, dx\nonumber\\
% &\;-\int_{\Omega}\sum_{p=1}^R
% k^p(\log(\bmr^{\bma^p})-\log(\bmr^{\bmb^p}))(\bmr^{\bma^p}-\bmr^{\bmb^p})\, dx.
% \end{align*}
% It is clear that the right hand side of \eqref{ed-rds}
% is non-positive, which indicating the total energy dissipation property.
% Introducing  the $R$ positive reaction coefficients
% \begin{align}
% \label{etap}
% \eta_p(\bmr):=\begin{cases}
% \frac{\bmr^{\bma^p}-\bmr^{\bmb^p}}{\log(\bmr^{\bma^p})-\log(\bmr^{\bmb^p})}, &\quad \text{if }\bmr^{\bma^p}\not=\bmr^{\bmb^p},\\[1ex]
% \bmr^{\bma^p}&\quad \text{else},\\
% \end{cases}
% \end{align}
% for $1\le p\le R$, the above energy dissipation law is rewritten as 
% \begin{align}
% \label{ed-rds}
% \frac{d}{dt}\sum_{i=1}^M\mathcal{E}(\rho_i)=&\;-\int_{\Omega}\left(\sum_{i=1}^M\gamma_i\rho_i|\nabla\frac{\delta \mathcal{E}}{\delta\rho}(\rho_i)|^2\right)\, dx\nonumber\\
% &\;-\int_{\Omega}\sum_{p=1}^R
% k^p\eta_p(\bmr)\left(\sum_{i=1}^M(\alpha_i^p-\beta_i^p)\frac{\delta \mathcal{E}}{\delta\rho}(\rho_i)\right)^2\, dx.
% \end{align}
As in the scalar case in Definition \ref{Dis}, we consider an optimal transport type distance: 
\begin{equation*}
\begin{aligned}
\mathrm{\mathbf{Dist}}_{V_1, V_2}(\bmr^0, \bmr^1)^2=& \inf_{\bmr, \bmm, \bms} \Big\{\int_0^1\int_\Omega 
\left(\sum_{i=1}^M\frac{|m_i|^2}{V_{1,i}(\rho_i)}+
\sum_{p=1}^R\frac{|s_p|^2}{V_{2,p}(\bmr)}\right)
 dx\mathrm{d\tau}:\;\;\\
&\hspace{1cm}\begin{tabular}{l}
$\partial_\tau\rho_i+\nabla\cdot m_i=
\sum_{p=1}^R(\alpha_i^p-\beta_i^p)s_p,\forall 1\le i\le M$\\
$\bmr(\cdot, 0)=\bmr^0, \;\;
\bmr(\cdot, 1)=\bmr^1.$
\end{tabular}\Big\},
\end{aligned}
\end{equation*}
where $\bmm=(m_1,\cdots, m_M)$ is the collection of fluxes, and 
$\bms=(s_1,\cdots, s_R)$ is the collection of sources.
Using this distance, the variational time implicit scheme is defined as follows (compare Definition \eqref{mms} for the scalar case).

\noindent\textbf{Definition}: {\em Variational time implicit scheme for system \eqref{rds-V1V2}.}
Denote $\Delta t>0$ as a time step size. Consider the scheme below:
\begin{equation}\label{mms-sys}
\begin{split}
\bmr^{n}=&\arg\min_{\bmr\in[\mathcal{M}]^M}\quad\frac{1}{2\Delta t}\mathrm{{\mathbf{Dist}}}_{V_1, V_2}(\bmr^{n-1}, \bmr)^2+ 
\sum_{i=1}^M\mathcal{E}_i(\rho_i). 
\end{split}
\end{equation}

Its one-step relaxation is 
given as follows, which is the starting point of our spatial discretization to be discussed in the next section.

\noindent\textbf{Definition}: {\em One-step relaxation of variational time implicit schemes for system \eqref{rds-V1V2}.} Consider 
\begin{subequations}\label{OJKOs}
    \begin{equation}\label{OJKOs1}
  % \bmr^{n}=\arg\min_{\bmr\in[\mathcal{M}]^M} 
 \inf_{\bmr, \bmm, \bms}
  \quad
  \frac{1}{2\Delta t}
  \left(\sum_{i=1}^M\int_\Omega\frac{
  \|m_i\|^2}{V_{1,i}(\rho_i)}dx
+  \sum_{p=1}^R\int_\Omega\frac{
  \|s_p\|^2}{V_{2,p}(\bmr)}dx\right)
    +\sum_{i=1}^M\mathcal{E}_i(\rho_i),   
\end{equation}
where the minimization is over all functions $\bmm\colon \Omega\rightarrow [\mathbb{R}^d]^M$, $\bms\colon \Omega\rightarrow[\mathbb{R}]^R$, and $\bmr\colon \Omega\rightarrow [\mathbb{R}_+]^M$, such that 
\begin{equation}\label{OJKOs2}
  \rho_i(x)-\rho_i^{n-1}(x)+\nabla\cdot m_i(x)=
  \sum_{p=1}^R(\alpha_i^p-\beta_i^p)s_p(x),\quad  \forall 1\le i\le M.
\end{equation}
\end{subequations}
The next step solution $\bmr^n$ is the density minimizer of \eqref{OJKOs}.
It is the saddle point of \eqref{saddle-pt} in which 
\[
\bmu = (\rho_1,\cdots, \rho_M,  m_1,\cdots, m_M, s_1, \cdots, s_R), 
\Phi = (\Phi_1, \cdots, \Phi_M),
\]
\begin{equation*}
    F(\bmu)=
     \frac{1}{2\Delta t}
\left(\sum_{i=1}^M\int_\Omega\frac{
  \|m_i\|^2}{V_{1,i}(\rho_i)}dx
+  \sum_{p=1}^R\int_\Omega\frac{
  \|s_p\|^2}{V_{2,p}(\bmr)}dx\right),  
    \quad     G(\Phi)=\sum_{i=1}^M\int_\Omega \rho_i^{n-1}\Phi_i\,dx,
\end{equation*}
and 
\begin{align*}
% \mathcal{D}\Phi= (\cdots,-\Phi_i, \cdots,  
% \cdots, \nabla_x\Phi_i, \cdots, 
% \cdots, \sum_{i=1}^M(\alpha_i^p-\beta_i^p)\Phi_i, 
% ,\cdots),
\mathcal{D}\Phi= &\;(-\Phi_1,\cdots,-\Phi_M, \nabla\Phi_1, \cdots, \nabla\Phi_M, \\
&\;\quad\quad\sum_{i=1}^M(\alpha_i^1-\beta_i^1)\Phi_i, 
,\cdots,\sum_{i=1}^M(\alpha_i^R-\beta_i^R)\Phi_i).
\end{align*}
% which can again be solved using ALG2 Algorithm~\ref{alg:1}.

\subsection{Reversible reaction-diffusion system with detailed balance}
\label{sec:25}
Note that the strongly reversible reaction-diffusion system 
\eqref{rds-V1V2} uses
the same energy
$\mathcal{E}_i(\rho_i)=\int_{\Omega}\rho_i(\log(\rho_i)-1)\, dx$
for all species. 
By simply relaxing this requirement and rescale the energy as 
\begin{align}
\label{db-ener}
\mathcal{E}_i(\rho_i)=\int_{\Omega}\rho_i(\log(\kappa_i\rho_i)-1)\, dx,
\end{align}
with $\kappa_i>0$ being a positive constant to be determined by the reaction rates $k_{\pm}^p$, we 
will recover reversible reaction-diffusion systems with 
detailed balance; see \cite{Glitzky13, LiuWangWang21a, LiuWangWang22, LiuWangWang22x}. 
For the above choice of energy, there holds 
\[
\frac{\delta}{\delta \rho}\mathcal{E}_i(\rho_i)
=\log(\kappa_i\rho_i).
\]
Below we give two specific examples that will be used in the numerical results section. 

\subsubsection{A two species model}
\label{sec:ts}
We consider two species $X_1, X_2$ with a single reversible 
reaction
\[
X_1+2X_2
\xrightleftharpoons[k_-]{k_+}
3X_2,
\]
with $k_-, k_+>0$.
Denoting the following coefficients and mobility functions,
\begin{subequations}
\label{mobX}
    \begin{align}
&\kappa_1 = k_+, \quad\kappa_2 = k_-\\
&V_{1,1}(\rho_1) = \gamma_1(\rho_1)^m,\quad
V_{1,2}(\rho_2) = \gamma_2\rho_2,\\
&V_{2}(\rho_1, \rho_2) = 
\ell(\kappa_1\rho_1\rho_2^2, \kappa_2\rho_2^3),
\end{align}
\end{subequations}
with $\ell(\cdot, \cdot)$ given in \eqref{mob},
$\gamma_1,\gamma_2>0$, $m\ge 1$, 
and using the energy \eqref{db-ener}, the system \eqref{rds-V1V2}
written in component-wise notation is given as follows:
\begin{subequations}
\label{2comp}
\begin{align}
\label{2comp1}
\partial_t\rho_1 =&\; \nabla\cdot\left(V_{1,1}(\rho_1)\nabla\frac{\delta\mathcal{E}_1}{\delta \rho}(\rho_1)\right)
 -V_2(\rho_1, \rho_2)
  \left(
 \frac{\delta\mathcal{E}_1}{\delta \rho}(\rho_1)-\frac{\delta\mathcal{E}_2}{\delta \rho}(\rho_2)\right),\\
 \partial_t\rho_2 =&\; \nabla\cdot\left(V_{1,2}(\rho_2)\nabla\frac{\delta\mathcal{E}_2}{\delta \rho}(\rho_2)\right)
 +V_2(\rho_1, \rho_2)
  \left(
 \frac{\delta\mathcal{E}_1}{\delta \rho}(\rho_1)-\frac{\delta\mathcal{E}_2}{\delta \rho}(\rho_2)\right).
\end{align}
\end{subequations}
This is the following two-component reversible
reaction-diffusion system studied in \cite{LiuWangWang21a,LiuWangWang22}, which has  potential applications in modeling tumor growth (see \cite{Liu18x,Perthame14tumor}):
\begin{align*}
\partial_t\rho_1 - \frac{\gamma_1}{m}
\Delta \rho_1^m
=&\; -(k_+\rho_1\rho_2^2-k_-\rho_2^3),\\
\partial_t\rho_2 - \gamma_2\Delta\rho_2
=&\; (k_+\rho_1\rho_2^2-k_-\rho_2^3).
\end{align*}

\subsubsection{A reversible four-component Gray-Scott model}
\label{rgs-model}
Our final example is the reversible four-component
Gray-Scott model originally proposed in \cite{Liang22GS} and numerically studied in \cite{LiuWangWang22}.
We consider four species $X_1, X_2, X_3, X_4$ with three reversible reactions
\[
X_1+2X_2
\xrightleftharpoons[k_-^1]{k_+^1}
3X_2,
\;\;
X_2
\xrightleftharpoons[k_-^2]{k_+^2}
X_3,
\;\;
X_1
\xrightleftharpoons[k_-^3]{k_+^3}
X_4.
\]
The reaction-diffusion system that combines these reactions with linear diffusion (with $M=4, R=3$)  can be written into the form \eqref{rds-V1V2} by the following specific choices of $\kappa$-values, and mobility functions $V_{1,i}$ and $V_{2,p}$:
\begin{subequations}
\label{rGS-coo}
\begin{align}
&\kappa_1 = 1, \quad\kappa_2 = \frac{k_-^1}{k_+^1}, 
\quad\kappa_3 =\; \frac{k_-^1}{k_+^1}\frac{k_-^2}{k_+^2}, 
\quad\kappa_4 = \frac{k_-^3}{k_+^3},\\
&V_{1,1}(\rho_1) = \gamma_1\rho_1,\quad
V_{1,2}(\rho_2) = \gamma_2\rho_2,
\quad V_{1,3}(\rho_3) =V_{1,4}(\rho_4)=  0,\\
&V_{2,1}(\bmr) = 
\ell(k_+^1\bmr^{\alpha^1}, k_-^1\bmr^{\beta^1})
=\;\frac{k_+^1\rho_1\rho_2^2-k_-^1\rho_2^3}{\log(\kappa_1 \rho_1)-\log(\kappa_2 \rho_2)},\\
&V_{2,2}(\bmr) = 
\ell(k_+^2\bmr^{\alpha^2}, k_-^2\bmr^{\beta^2})
=\;\frac{k_+^2\rho_2-k_-^2\rho_3}{\log(\kappa_2 \rho_2)-\log(\kappa_3 \rho_3)},\\
&V_{2,3}(\bmr) = 
\ell(k_+^2\bmr^{\alpha^3}, k_-^3\bmr^{\beta^3})
=\;\frac{k_+^3\rho_1-k_-^3\rho_4}{\log(\kappa_1 \rho_1)-\log(\kappa_4 \rho_4)}.
\end{align}
\end{subequations}
For completeness, we write down the PDE system \eqref{rds-V1V2}
with the above choice of parameters using a standard component-wise notation in the following:
\begin{subequations}
\label{rGS}
\begin{align}
\partial_t\rho_1 =&\; \gamma_1\Delta \rho_1
 -(k_+^1\rho_1\rho_2^2-k_-^1\rho_2^3)
 -(k_+^3\rho_1-k_-^3\rho_4),\\
 \partial_t\rho_2 =&\; 
 \gamma_2\Delta \rho_2
 +(k_+^1\rho_1\rho_2^2-k_-^1\rho_2^3)
 -(k_+^2\rho_2-k_-^2\rho_3),\\
 \partial_t\rho_3 =&\; 
(k_+^2\rho_2-k_-^2\rho_3),\\
 \partial_t\rho_4 =&\; 
(k_+^3\rho_1-k_-^3\rho_4).
\end{align}
\end{subequations}
This is the reversible Gray-Scott model proposed in \cite{Liang22GS}
to approximate the following two-component irreversible Gray-Scott model \cite{Gray85}:
\begin{subequations}
\label{irrGS}
\begin{align}
\partial_t\rho_1 =&\; \gamma_1\Delta \rho_1
 -k_+^1\rho_1\rho_2^2
 -k_+^3(\rho_1-1),\\
 \partial_t\rho_2 =&\; 
 \gamma_2\Delta \rho_2
 +k_+^1\rho_1\rho_2^2
 -k_+^2\rho_2,
\end{align}
\end{subequations}
which can form spatially complex patterns \cite{Pearson93}, and is widely used to study pattern formations.
We comment that by requiring
\begin{align}
\label{rrr}
\kappa_-^1\rho_2^3\approx 0,\quad 
k_-^3\rho_4\approx k_+^3, \quad \text{ and}\quad \kappa_-^2\rho_3\approx 0,
\end{align}
the reversible Gray-Scott model \eqref{rGS} formally converges to the irreversible Gray-Scott model \eqref{irrGS}.
We refer interested readers to \cite{Liang22GS} for a  theoretical study. Formally, the conditions \eqref{rrr} can be 
achieved by taking very small backward reaction rates $\kappa_-^1, \kappa_-^2, \kappa_-^3\ll1$, and using initial 
value for $\rho_4$ such that  $\rho_4=\frac{\kappa_+^3}{\kappa_-^3}\gg 1$. 
As a side note, we mention that spatially complex patterns were not observed in the numerical results 
\cite[Example 4.3]{LiuWangWang22}, which uses a second-order  operator splitting scheme via an energetic variational formulation.
We found that the reason for no pattern formation in the test case in \cite{LiuWangWang22} was due to inappropriate choices of a too large backward reaction rate $k_-^3$ and the initial condition.
With a more careful choice of diffusion coefficients, reaction rates, and initial conditions, we numerically observe complex pattern formations in both 1D and 2D reversible Gray-Scott models;
% using our variational time implicit scheme to be discussed in the next section;  
see our simulation results in Section \ref{ex6}.

\section{High-order spatial discretization for generalized time implicit schemes}
\label{sec:alg}
In this section, we first apply high-order spatial discretization to the time implicit schemes \eqref{OJKO-A}, \eqref{OJKO-B} and
their system version \eqref{OJKOs}, and then discuss the practical implementation of each step of the ALG2 Algorithm \ref{alg:1}.
We restrict ourselves to the two-dimensional setting with a rectangular domain $\Omega$, 
which is triangulated using a uniform rectangular mesh $\Th=\{T\}$.
While our method can work on general unstructured triangular meshes, see \cite{FuLiu23}, the restriction to uniform rectangular meshes has a huge advantage in computing the convolution term in the  energy \eqref{ener}, where the Fast Fourier transform can be applied.

\subsection{The finite element spaces and notation}
The spatial discretization is adopted from our previous work on high-order schemes for optimal transport and mean field games \cite{FuLiu23}.
Specifically, the high-order $H^1$-conforming finite element space 
\begin{align}
\label{fes-V}
V_h^k:=&\;\{v\in H^1(\Omega): \;\; v|_{T}
\in\mathcal{Q}^k(T)\;\;
\forall T\in\Th\},
\end{align}
is used to approximate the $\Phi$ variable, and  the high-order $L^2$-conforming discontinuous polynomial space,
\begin{align}
\label{fes-W}
W_h^k:=&\;\{w\in L^2(\Omega): \;\; w|_{T}
\in \mathcal{Q}^k(T)\;\;
\forall T\in\Th\},
\end{align}
is used to approximate the other variables where derivative information is not needed.
Here $\mathcal{Q}^k(T)$ is the space of tensor-product polynomial spaces of degree no greater than $k\ge 1$ in each direction.
We equip the space $W_h^k$ with a set of {\it nodal basis}
$\{\varphi_i\}_{i=1}^{N_W}\subset W_h^k$ that satisfies
\begin{align}
\label{W-basis}
\varphi_i(\bmxi_j)=\delta_{ij},\quad \forall 1\le j\le N_W,
\end{align}
where $N_W$ is the dimension of the space $W_h^k$, $\delta_{ij}$
is the Kronecker delta function, and 
$\{\bmxi_i\}_{i=i}^{N_W}$ is the collection of $N_{W}$ Gauss-Legendre integration points 
with corresponding weights $\{\omega_i\}_{i=1}^{N_W}$ 
on the mesh $\Th$.
For the current work, only evaluation on quadrature points for functions in $W_h^k$ is needed in the algorithm, not their derivatives. 
Hence, given a function $u_h\in W_h^k$ expressed as 
$
u_h=\sum_{i=1}^{N_W}{\sf u}_i\varphi_i(x),
$
we simply need to store and update its coefficient vector $[{\sf u}_i,\cdots, {\sf u}_{N_W}]^T$, which makes its practical implementation extremely simple. 
Moreover, we denote the discrete $L^2(\Omega)$-inner product  $(\cdot, \cdot)_h$
as 
\begin{align}
\label{num-int}
(u, v)_h := \sum_{i=1}^{N_W} u(\bmxi_i)v(\bmxi_i)\omega_i,
\end{align}
we have 
$
(u_h, v_h)_h = \sum_{i=1}^{N_W} {\sf u_i}{\sf v_i}\omega_i.
$
for any function $u_h = \sum_{i=1}^{N_W}{\sf u}_i\varphi_i(x)\in W_h^k$ and $v_h = \sum_{i=1}^{N_W}{\sf v}_i\varphi_i(x)\in W_h^k$.
% We will further clarify this point in the following discussion.

% We approximate the dual variable $\Phi$ using the $H^1$-conforming finite element space $V_h^{k+1}$, 
% each components of $\bma$ and $\bma^*$ using the 
% integration rule space $W_h^k$, and
% the variables $\rho_1$ and $\rho_1^*$ (for MFG) using the 
% integration rule space $M_h^k$.

\subsection{High-order FEM for the reaction diffusion equation}
Since the variation time implicit scheme 
for the Wasserstein gradient flow problem 
\eqref{OJKO-A}
is a special case for the reaction-diffusion problem \eqref{OJKO-B} with no reaction $V_2(\rho)=0$. 
We only present the high-order spatial discretization for \eqref{OJKO-B}.
% using ALG2 Algorithm \ref{alg:1}.
We first write the discrete saddle point problem
in its augmented Lagrangian form \eqref{sd-aug}:
given mesh $\Th$, polynomial degree $k\ge 1$, 
time step size $\Delta t>0$ and 
density approximation $\rho_h^{\text{old}}$ at the previous time step, 
find $\bmu_h, 
\bmu_h^*\in [W_h^k]^{4}$, and $\Phi_h\in V_h^{k}$, such that
\begin{align}
\label{aug-mfp-h}
\inf_{\bmu\in [W_h^k]^{4}} \;\;
    \sup_{\Phi_h\in V_h^{k}, \bmu_h^*\in [W_h^k]^{4}}L_{r,h}(\Phi_h, \bmu_h, \bmu_h^*),
\end{align}
where  
$\bmu_h=(\rho_{h}, m^0_{h}, m^1_{h}, s_{h})$ is the collection of density $\rho_h$, (two-dimensional) flux
$m_h=(m_{h}^0, m_{h}^1)$, and source term $s_h$, 
$\bmu_h^*=(\rho^*_{h}, m^{0,*}_{h},m^{1,*}_{h}, s^*_{h})$
is its dual, and the discrete augmented Lagrangian is 
\begin{align}
\label{Lrh}
L_{r,h}(\Phi_h, \bmu_h, \bmu_h^*):= &\;
   F_h^*(\bmu_h^*)
+G_h(\Phi_h)+(\bmu_h, \mathcal{D}\Phi_h-\bmu_h^*)_h\nonumber\\
&\;+\frac{r}{2}(\mathcal{D}\Phi_h-\bmu_h^*,
\mathcal{D}\Phi_h-\bmu_h^*)_h.
\end{align}
Here 
$(\cdot, \cdot)_h$ is the volume integration rule given in \eqref{num-int},
the operators
\begin{align}
\label{op}
\mathcal{D}\Phi_h:=&\;(-\Phi_{h}, 
\partial_{x_0}\Phi_{h},
\partial_{x_1}\Phi_{h},
\Phi_{h}),
\\
\label{Gh}
   G_h(\Phi_h)
:=&\;(\rho_h^{\text{old}}, \Phi_h)_h,\\
\label{Fhs}
   F_h^*(\bmu_h^*)
:=&\;
\sup_{
\bmu_h\in [W_h^k]^4
}(\bmu_h^*, \bmu_h)_h- 
   F_h(\bmu_h),
\end{align}
where $(\partial_{x_0}, \partial_{x_1})=\nabla$ is the gradient, 
% \[
% \delta\Phi_{1,h}=-\Phi_h, \;\;
% (\delta\Phi_{2,h}, \delta\Phi_{3,h}) =\nabla\Phi_h, \;\;
% \delta\Phi_{4,h}= \Phi_h,
% \]
% $\rho_h^{\text{old}}$ is the density approximation at the previous time step, 
and $F_h$ is given as 
\begin{align}
\label{Fh}
F_h(\bmu_h):=
\left(\frac{|m_h^0|^2+|m_h^1|^2}{2\rho_{h}}
+\frac{s_{h}^2}{2V_2(\rho_{h})}, 1\right)_h
+\Delta t \,\mathcal{E}_h(\rho_{h}),
\end{align}
in which the discrete total energy 
\begin{align}
\label{Eh}
\mathcal{E}_h(\rho_{h}):=
\left(\alpha U_m(\rho_{h})+\rho_{h} V(x), 1\right)_h 
+ \frac12\left(W*\rho_{h}, \rho_{h}\right)_h
\end{align}
for energy of the form \eqref{ener}.
We note that when the interaction kernel $W(x)$ is smooth, the convolution term $W*\rho_{h}$ in the above expression can be simply evaluated using the same integration rule \eqref{num-int}.
On the other hand, for singular kernels with $W(0)=\pm\infty$, we shall use alternative integration rules to avoid the evaluation of $W(0)$ when evaluating this convolution term. 

Note that a similar formulation can be used for the more general case \eqref{OJKO} for the equation \eqref{dissipative} where the denominator in the first term in \eqref{Fh} is replaced by a general mobility function $V_1(\rho_h)$.
% We will remark on this issue again for test cases with singular kernels in the numerical results section.

% The density at next step is 
% $\rho_h^{\text{new}}=\rho_{h}$, which is part of the solution to the saddle point problem \eqref{aug-mfp-h}.

\begin{remark}[On polynomial degree for $\Phi_h$ and $\bmu_h$]
We note that  in our previous work \cite{FuLiu23}, the polynomial degree for 
the discontinuous functions $\bmu_h$
associated with the integration rule space
$W_h^k$
is taken to be one order lower than that for the 
the continuous function $\Phi_h$.
Here we find that increasing the integration rule space order to be the same as the continuous space $V_h^k$
leads to a more accurate result. Hence we use equal order approximations for all our numerical results.
% Here we use equal order approximations, 
% Our preliminary numerical results not reported here suggest that 
% taking equal order polynomial degrees lead to more accurate and robust
% results than the unequal choice in \cite{FuLi}. Hence we exclusively work on the equal polynomial order case in this paper.
% We leave detailed numerical analysis on the choice of polynomial degrees to our future work.
\end{remark}

We next provide a practical implementation of  each step of the ALG2 Algorithm \ref{alg:1} for solving the saddle point problem.
\subsubsection{Step A: scalar case}
\label{stepA1}
Taking infinium of $L_{r,h}$ with respect to $\Phi_h$, we arrive at 
a constant coefficient reaction-diffusion equation:
find $\Phi_h^\ell\in V_h^k$ such that
\begin{align}
\label{scalar-rdh}
(\mathcal{D}\Phi_h^{\ell}, \mathcal{D}\Psi_h)_h=
(\bmu_h^{*,\ell-1}-\frac{1}{r}\bmu_h^{\ell-1}, \mathcal{D}\Psi_h)_h
- \frac{1}{r}(\rho_h^{\text{old}}, \Psi_h)_h,\quad \forall \Psi_h\in V_h^k.
\end{align}
Using the definition in \eqref{op}, we 
write the above equation using physical variables: 
\begin{align*}
2(\Phi_h^{\ell}, \Psi_h)_h+
(\nabla\Phi_h^{\ell}, \nabla\Psi_h)_h
=&\;
(s_h^{*,\ell-1}-\rho_h^{*,\ell-1}+\frac{\rho_h^{\ell-1}-s_h^{\ell-1}-\rho_h^{\text{old}}}{r},
\Psi_h)_h\\
&\;+(\bmm_h^{*,\ell-1}-\frac{\bmm_h^{\ell-1}}{r},
\nabla\Psi_h)_h.
\end{align*}
This symmetric positive definite linear system can be efficiently solved using, e.g., a multigrid algorithm \cite{Bramble93,XuASP}.

\subsubsection{Step B/C: scalar case}
\label{stepB1}

The next step is to take infinium of $L_{r,h}$ with respect to 
$\bmu_h^*$. Find $\bmu_h^{*,\ell}\in [W_h^k]^4$, such that 
it solves
\begin{align*}
% \label{stepB}
\argmin_{\bmu_h^{*}\in [W_h^k]^4}
 F_h^*(\bmu_h^*)-(\bmu_h^{\ell-1}, \bmu_h^*)_h+\frac{r}{2}(\mathcal{D}\Phi_h^{\ell}-\bmu_h^*,
\mathcal{D}\Phi_h^{\ell}-\bmu_h^*)_h.
\end{align*}
Without loss of generality, we abuse the notation and denote
$\mathcal{D}\Phi_h^\ell$ as its interpolation onto the space $[W_h^k]^4$.
We further denote 
\begin{align}
\label{ubar}
\overline{\bmu}_h:=\mathcal{D}\Phi_h^\ell+\frac{1}{r}\bmu_h^{\ell-1}
\in [W_h^k]^4.
\end{align}
Then the above minimization problem is equivalent to 
\begin{align}
\label{stepB}
\argmin_{\bmu_h^{*}\in [W_h^k]^4}
 F_h^*(\bmu_h^*)+\frac{r}{2}(\bmu_h^*-\overline{\bmu}_h,\bmu_h^*-\overline{\bmu}_h)_h.
\end{align}
After this minimizer is computed, the last step is to update the Lagrangian multiplier
$\bmu_h^{\ell}$ according to \eqref{stepC}:
\begin{align}
\label{stepCh}
\bmu^\ell_h = \bmu^{\ell-1}_h+r(\mathcal{D}\Phi^\ell_h-\bmu^{*,\ell}_h)
=r(\overline{\bmu}_h-\bmu^{*,\ell}_h)\in [W_h^k]^4,
\end{align}
where we used the definition \eqref{ubar} in the last step.

Due to the complicated form of the energy \eqref{ener}, it might be  challenging to compute an explicit expression of the convex conjugate $F_h^*(\bmu_h^*)$.
Here we present a practical way to solve the minimization problem \eqref{stepB} without explicitly computing this convex conjugate using duality.
The main idea is presented in the next result.

\begin{proposition}
\label{thm:stepB}
Let $\bmu_h^{*,\ell}\in [W_h^k]^4$
be the minimizer to the problem \eqref{stepB}, and let 
$\bmu^\ell_h$ be given according to \eqref{stepCh}.
Then, 
$\bmu^\ell_h$ is the minimizer to the following problem
\begin{align}
\label{dualB}
\bmu^\ell_h = 
\argmin_{\bmu_h\in [W_h^k]^4}
 F_h(\bmu_h)+\frac{1}{2r}(\bmu_h-r\overline{\bmu}_h,\bmu_h-r\overline{\bmu}_h)_h,
\end{align}
which we refer to as the dual problem of \eqref{stepB}. 
Furthermore, there holds 
\begin{align}
\label{utous}
\bmu_h^{*,\ell}=\overline{\bmu}_h-\bmu^\ell_h/r.
\end{align}
\end{proposition}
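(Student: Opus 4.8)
The plan is to regard both \eqref{stepB} and \eqref{dualB} as proximal-type minimizations in the finite-dimensional Hilbert space $[W_h^k]^4$ equipped with the discrete inner product $(\cdot,\cdot)_h$, and to relate their minimizers through the Fenchel conjugacy between $F_h$ and $F_h^*$. I would first note that $(\cdot,\cdot)_h$ is a genuine inner product, since the Gauss--Legendre weights $\omega_i$ are strictly positive; thus the Legendre transform \eqref{Fhs} is the standard convex conjugate with respect to this pairing. Under the structural hypotheses used throughout the paper ($V_2$ concave in $\rho$ and $\mathcal{E}$ convex), $F_h$ in \eqref{Fh} is a closed proper convex function, so $F_h^{**}=F_h$ and the conjugate--subgradient equivalence
\[
\bmv\in\partial F_h^*(\bmw)\quad\Longleftrightarrow\quad \bmw\in\partial F_h(\bmv)
\]
is available; this equivalence is the engine of the whole argument.

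Next I would write the first-order optimality condition for \eqref{stepB}. Because $F_h^*$ is convex and the quadratic penalty is strongly convex, \eqref{stepB} has a unique minimizer $\bmu_h^{*,\ell}$, characterized by $0\in\partial F_h^*(\bmu_h^{*,\ell})+r(\bmu_h^{*,\ell}-\overline{\bmu}_h)$, i.e. $r(\overline{\bmu}_h-\bmu_h^{*,\ell})\in\partial F_h^*(\bmu_h^{*,\ell})$. Setting $\bmu_h^\ell:=r(\overline{\bmu}_h-\bmu_h^{*,\ell})$ — which, after inserting the definition \eqref{ubar}, is exactly the update \eqref{stepCh} — this reads $\bmu_h^\ell\in\partial F_h^*(\bmu_h^{*,\ell})$, hence by the conjugacy relation $\bmu_h^{*,\ell}\in\partial F_h(\bmu_h^\ell)$. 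Rearranging the definition of $\bmu_h^\ell$ gives at once $\bmu_h^{*,\ell}=\overline{\bmu}_h-\bmu_h^\ell/r$, which is \eqref{utous}. It then remains to verify that this $\bmu_h^\ell$ solves \eqref{dualB}: the objective there is strongly convex, so stationarity is necessary and sufficient, and its optimality condition $0\in\partial F_h(\bmu_h^\ell)+\tfrac1r(\bmu_h^\ell-r\overline{\bmu}_h)$ collapses, upon substituting $\bmu_h^\ell=r(\overline{\bmu}_h-\bmu_h^{*,\ell})$, to precisely $\bmu_h^{*,\ell}\in\partial F_h(\bmu_h^\ell)$, which was just established. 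Hence $\bmu_h^\ell$ is the unique minimizer of \eqref{dualB}.

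The point requiring care — rather than a genuine difficulty — is the justification that $F_h$ is closed, proper and convex so that the Fenchel equivalence applies in the discrete setting. Properness and convexity follow from the perspective-function structure of the terms $|m_h|^2/(2\rho_h)$ and $s_h^2/(2V_2(\rho_h))$ in \eqref{Fh} (the latter needing concavity of $V_2$) together with convexity of the discrete energy $\mathcal{E}_h$; closedness requires the usual lower-semicontinuous closure convention at $\rho_h=0$, where the perspective function takes the value $+\infty$ unless the numerator vanishes as well. Once $F_h$ is recognized as a closed proper convex function on $\bigl([W_h^k]^4,(\cdot,\cdot)_h\bigr)$, everything above is purely formal. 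I would also remark that in the non-convex regimes (e.g. the Fisher--KPP mobility of choice (iii)) the same computation still identifies stationary points of \eqref{stepB} with stationary points of \eqref{dualB}, which is what is actually exploited in the implementation.
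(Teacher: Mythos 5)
Your proof is correct and reaches the same destination as the paper's, but the mechanics differ enough to be worth noting. The paper argues formally at the level of the saddle-point functional \eqref{pd}: it eliminates the dual variable by setting the $\bmu_h^*$-derivative to zero, obtains $\bmu_h^*=\overline{\bmu}_h-\bmu_h/r$, and substitutes back to recognize the remaining maximization over $\bmu_h$ as (minus) the objective of \eqref{dualB}. You instead work entirely with first-order optimality conditions: stationarity of \eqref{stepB} gives $r(\overline{\bmu}_h-\bmu_h^{*,\ell})\in\partial F_h^*(\bmu_h^{*,\ell})$, the conjugate--subgradient inversion converts this to $\bmu_h^{*,\ell}\in\partial F_h(\bmu_h^\ell)$, and this is verified to be exactly the optimality condition of \eqref{dualB}. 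The two routes are dual renderings of the same idea, but yours buys something the paper's does not: it makes explicit the hypothesis ($F_h$ closed, proper, convex, so that $F_h^{**}=F_h$ and the subgradient inversion is valid) under which the equivalence is a theorem rather than a formal manipulation, and it correctly identifies the perspective-function structure of \eqref{Fh} as the source of that convexity. The paper's substitution argument implicitly assumes the inner supremum in \eqref{pd} is attained and that the min--max elimination is legitimate, which amounts to the same convexity assumption left unstated. Your closing remark that in non-convex regimes (e.g.\ the Fisher--KPP mobility) the computation still matches stationary points of the two problems is also accurate and is, in effect, what the implementation relies on there. No gaps.
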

\begin{proof}
The equation \eqref{utous} is a simple rewriting of \eqref{stepCh}.
Let us now prove \eqref{dualB}.
By definition \eqref{Fhs}, we have $\bmu_h^{*,\ell}$
is part of the saddle point solution 
\begin{align}
\label{pd}
\inf_{\bmu_h^{*}\in[W_h^k]^4}\sup_{\bmu_h\in[W_h^k]^4}
(\bmu_h,\bmu_h^*)_h-F_h(\bmu_h)
+\frac{r}{2}(\bmu_h^*-\overline{\bmu}_h,\bmu_h^*-\overline{\bmu}_h)_h.
\end{align}
Taking the derivative with respect to $\bmu_h^*$ in the above expression, we get
\[
\bmu_h^*= \overline{\bmu}_h-\bmu_h/r.
\]
Plugging this expression back to \eqref{pd}, we easily see that 
the primal variable ${\bmu}_h$ is the minimizer to the 
dual problem \eqref{dualB}. By \eqref{stepCh}, it is clear that
this optimizer is nothing but the solution $\bmu_h^\ell$.
This completes the proof.
\end{proof}
Proposition \ref{thm:stepB}  suggests to first solve for the primal variable $\bmu_h^\ell$ using the minimization problem \eqref{dualB}, then update 
$\bmu_h^{*,\ell}$ using \eqref{utous}, which is the approach we adopt in our implementation. 
It is in general more convenient than
the (equivalent) original ALG2 algorithm that first solve for the dual variable $\bmu_h^{*,\ell}$ using \eqref{stepB} then update 
$\bmu_h^\ell$ using \eqref{stepCh}, which requires the computation of the dual functional \eqref{Fhs}. 

Next, using the particular form of $F_h$ in \eqref{Fh}, we 
show that the minimization problem \eqref{dualB} can be efficiently solved by first locally expressing 
flux $m_{h}^0, m_{h}^1$  and source $s_{h}$ in terms of density $\rho_{h}$ and then solving a nonlinear optimization problem 
for $\rho_{h}$ alone.
We record this procedure in the following result.
\begin{proposition}
\label{thm:uh0}
Let $\bmu_h^\ell$ be the solution to 
\eqref{dualB}. 
Then there holds
\begin{align}
\label{u123}
m_{h}^{0,\ell} = &\; \frac{r\rho_{h}^\ell}{r+\rho_{h}^\ell}\overline{m}_{h}^0,
\quad
m_{h}^{1,\ell} = \; \frac{r\rho_{h}^\ell}{r+\rho_{h}^\ell}\overline{m}_{h}^1,\quad
s_{h}^\ell = \; \frac{rV_2(\rho_h^\ell)}{r+V_2(\rho_h^\ell)}\overline{s}_{h},
\end{align}
where 
\[
\overline{\bmu}_h=(\overline{\rho}_h, 
\overline{m}_h^0, \overline{m}_h^1, 
\overline{s}_h),
\]
and $\rho_h^\ell$
is the minimizer to the following reduced problem:
\begin{align}
\label{dualB-red}
\argmin_{\rho_h\in W_h^k}&\;
 \frac{1}{2r}\left(|\rho_h-r\overline{\rho}_{h}|^2,1\right)_h+
\left(\frac{r^2(|\overline{m}_{h}^0|^2+|\overline{m}_{h}^1|^2)}{2(r+\rho_h)}
 ,1\right)_h\nonumber\\
&\; \quad
+\left(\frac{r^2\,|\overline{s}_{h}|^2}{2(r+V_2(\rho_h))}
 ,1\right)_h
 +\Delta t\, \mathcal{E}_h(\rho_h).
\end{align}
\end{proposition}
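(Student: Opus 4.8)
The plan is to exploit the separable structure of the objective in \eqref{dualB}. Writing out $F_h$ from \eqref{Fh} and the quadratic penalty term componentwise over $\bmu_h=(\rho_h, m_h^0, m_h^1, s_h)$, the only coupling between the flux/source variables and the density occurs through the mobilities $\rho_h$ (in the denominator of the flux terms) and $V_2(\rho_h)$ (in the denominator of the source term), both of which are evaluated pointwise at the quadrature nodes $\bmxi_i$. So for a \emph{fixed} value of $\rho_h$ at a given node, the minimization over $m_h^0, m_h^1, s_h$ decouples into three scalar one-dimensional problems of the form $\min_{m}\ \frac{1}{2\rho}m^2 + \frac{1}{2r}(m - r\overline m)^2$ and similarly for $s$ with $\rho$ replaced by $V_2(\rho)$. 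First I would carry out this inner minimization: differentiating in $m$ gives $\left(\frac{1}{\rho}+\frac1r\right)m = \overline m$, i.e. $m = \frac{r\rho}{r+\rho}\,\overline m$, and likewise $s = \frac{rV_2(\rho)}{r+V_2(\rho)}\,\overline s$. This is exactly \eqref{u123} (using that all quantities live in $W_h^k$ and the inner product $(\cdot,\cdot)_h$ is a positive diagonal quadrature form, so the nodewise optimality conditions assemble into the stated $W_h^k$-identities).

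Next I would substitute these optimal flux/source values back into the objective of \eqref{dualB}. For the $m$-block this means evaluating $\frac{1}{2\rho}m^2 + \frac{1}{2r}(m-r\overline m)^2$ at $m = \frac{r\rho}{r+\rho}\overline m$; a short computation collapses this to $\frac{r^2}{2(r+\rho)}\overline m^2$ (the cleanest route: the value of $\min_m \frac{a}{2}m^2 + \frac{b}{2}(m-c)^2$ is $\frac{ab}{2(a+b)}c^2$, applied with $a=1/\rho$, $b=1/r$, $c = r\overline m$). The same identity with $a = 1/V_2(\rho)$ handles the source block, giving $\frac{r^2}{2(r+V_2(\rho))}\overline s^2$. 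The $\rho$-block of the penalty term, $\frac{1}{2r}(\rho_h - r\overline\rho_h)^2$, and the energy $\Delta t\,\mathcal{E}_h(\rho_h)$ are untouched. Collecting the surviving terms under $(\cdot,1)_h$ yields precisely the reduced functional in \eqref{dualB-red}, and its minimizer is the density component $\rho_h^\ell$ of the solution $\bmu_h^\ell$ to \eqref{dualB}.

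The one point that needs a little care — and is the only real obstacle — is the interchange of the two minimizations: I am claiming $\min_{\rho, m, s} \Theta(\rho, m, s) = \min_\rho \big[\min_{m,s}\Theta(\rho, m, s)\big]$ and that the joint minimizer has the stated form. This is just the standard fact that minimizing jointly equals minimizing the partial-minimum (valid with no convexity hypotheses, since we are only claiming the identity of the infima and that any joint minimizer restricts to a minimizer of the reduced problem), combined with the observation that the inner problem in $(m,s)$ is a strictly convex quadratic for each fixed nodal value $\rho_h(\bmxi_i)$ — strict because $\rho_h > 0$ and $V_2(\rho_h) > 0$ at the quadrature nodes, so $a = 1/\rho > 0$ and $a = 1/V_2(\rho) > 0$ — hence has the unique closed-form solution \eqref{u123}. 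One should note here that the positivity $\rho_h > 0$ is exactly the constraint already imposed in \eqref{OJKO-B}, and that $V_2 \ge 0$ with the cases of interest giving $V_2(\rho) > 0$ for $\rho > 0$; in a degenerate case where $V_2(\rho_h) = 0$ at some node, the source term forces $s_h = 0$ there and the formula $s_h = \frac{rV_2(\rho_h)}{r+V_2(\rho_h)}\overline s_h$ still reads correctly as $0$. With these observations the substitution is justified and the proof is complete.
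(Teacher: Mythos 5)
Your proposal is correct and follows essentially the same route as the paper's proof: set the derivatives with respect to $m_h^0, m_h^1, s_h$ to zero to obtain the closed forms \eqref{u123}, then substitute back into \eqref{dualB} to collapse the flux/source blocks into the $\frac{r^2|\overline{m}_h|^2}{2(r+\rho_h)}$ and $\frac{r^2|\overline{s}_h|^2}{2(r+V_2(\rho_h))}$ terms of the reduced problem \eqref{dualB-red}. Your added care about the partial-minimization interchange and the pointwise positivity of $\rho_h$ and $V_2(\rho_h)$ is a welcome refinement of the paper's terser argument, but it does not change the method.
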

\begin{proof}
The derivatives of 
the functional in \eqref{dualB} at the saddle point vanishes. 
Taking derivatives with respect to $m_{h}^0,m_{h}^1$ and $s_{h}$, we get the relations 
\eqref{u123}. Plugging these relations back to \eqref{dualB} and simplifying, 
we get the optimziation problem \eqref{dualB-red} for $\rho_h^\ell$.
\end{proof}

\begin{remark}[On pointwise update for \eqref{dualB-red}]
\label{rk:pu}
The problem \eqref{dualB-red} can be solved by computing its critical point.
Taking the variation of the function in \eqref{dualB-red}
with respect to $\rho_h$, we have 
% This leads to the following nonlinear problem:
% find $\rho_h\in W_h^k$ such that 
\begin{align}
\label{critical}
\frac{1}{r}(\rho_h-r\overline{\rho}_{h})
-
\frac{r^2(|\overline{m}_{h}^0|^2+|\overline{m}_{h}^1|^2)}{2(r+\rho_h)^2}
-
\frac{r^2V_2'(\rho_h)\overline{s}_{h}^2}{2(r+V_2(\rho_h))^2}
+\Delta t \,\frac{\delta\mathcal{E}_h}{\delta \rho}
(\rho_h) = 0.
\end{align}
By the choice of the function space \eqref{fes-W}, it is clear that \eqref{critical}
is satisfied on all quadrature points $\bmxi_i$ for $1\le i\le N_W$.
Using definition of the energy \eqref{Eh}, we have 
\[
\frac{\delta \mathcal{E}_h}{\delta \rho}(\rho_h)
= \alpha U_m'(\rho_h)+ V(x)+W*\rho_h. 
\]
In the absence of interaction kernel where $W(x)=0$, the equation \eqref{critical}  can be solved  in a pointwise fashion
per quadrature point thanks to the particular choice of the nodal basis \eqref{W-basis} for the space \eqref{fes-W}, using, e.g., Newton's method.

On the other hand, when aggregation effects are included, the term 
$W*\rho_h$ prohibits such pointwise update due to the nonlocal effect of this convolution. 
In this case, we treat the convolution term  $W*\rho_h$ explicitly in \eqref{critical} by evaluating it at the previous time step, i.e., 
\[
W*\rho_h\approx W*\rho_h^{\text{old}},
\]
and then solve the modified pointwise local problem \eqref{critical}
using the Newton's method.
This is the choice we use in all our simulation results with aggregation effects.
Similar treatment was used in, e.g., \cite{Carlier16,benamou2016augmented}.
% We may further perform a Picard iteration on this convolution term 
% to enhance its stability. In our implementation, we observe that the explicit treatment without Picard iteration already leads to a good result for various interaction kernels. 
\end{remark}

\begin{remark}[On convexity]
\label{rk:convex}
Let us briefly comment on convexity of the problem \eqref{dualB-red}. 
When aggregation effects are included, we extrapolate the nonlocal convolution term according to Remark~\ref{rk:pu}. 
% For simplicity, we ignore aggregation effects by taking $W(x)=0$
% in this discussion.
The problem \eqref{dualB-red} is  a pointwise minimization problem per quadrature point. 
Taking its second-order variation, we obtain
\begin{align}
\label{hess}
\frac{1}{r}+
\frac{r^2(|\overline{m}_{h}^0|^2+|\overline{m}_{h}^1|^2)}{(r+\rho_h)^3}
+
\frac{r^2|\overline{s}_{h}|^2\left(2V_2'(\rho_h)^2-(r+V_2(\rho_h))V_2''(\rho_h)\right)}{2(r+V_2(\rho_h))^3}
+\alpha\Delta t U_m''(\rho_h)
\end{align}
It is clear that the first, second, and last term of the above
expression are always nonnegative as long as $\rho_h\ge0$.
Moreover, if 
\begin{align}
\label{convex}
2V_2'(\rho_h)^2-(r+V_2(\rho_h))V_2''(\rho_h)\ge0,
\end{align}
then the third term is also nonnegative.
For such a choice of mobility $V_2$, the minimization problem is convex, and uniqueness of the solution is  guaranteed unconditionally for any time step size $\Delta t$. 
In the absence of aggregation effects,  the overall ALG2 algorithm with $V_2$ satisfying \eqref{convex}
can also be shown to be unconditionally convergent; see, e.g., \cite{Eckstein92}.

We note that the convexity condition \eqref{convex} is ensured if we take 
 $V_2(\rho)=c\rho^\gamma$ for $c>0$ and $0\le \gamma\le 1$, 
 or 
 $V_2(\rho)=\frac{\rho-\bar{\rho}}{\log(\rho)-\log(\bar{\rho})}$
 for any $\bar{\rho}>0$. The latter choice will be used in the system case.
%  On the other hand, the choice 
%   $V_2(\rho)=c\rho^\gamma$ for $\gamma>1$ implies that 
% \[
%   2\eta'(u)^2-(r+\eta(u))\eta''(u)
%   = c\gamma\left(c(\gamma+1)u^\gamma-r(\gamma-1)\right)u^{\gamma-2},
%   \]
%   which can be negative for small $\rho$.
  % In this case, we may still recover local convexity as long as 
  % the three positive terms in \eqref{hess} dominates this potential negative term for small $u$.
  % We further note that 
  On the other hand, 
the mobility $V_2(\rho)=\frac{\rho(1-\rho)}{\log(\rho)}$ for the Fisher-KPP equation \eqref{kpp} does not satisfy the convexity condition \eqref{convex}. 
For this case, we may use a small time step size $\Delta t$ to get a stable simulation.

  We finally note that small time step size $\Delta t$ may also be needed for the general case with an interaction potential $W$, where extrapolation is used to approximate the  problem \eqref{dualB-red}  as mentioned in Remark \eqref{rk:pu}. 
\end{remark}

For completeness, we collect one iteration of this algorithm as follows.
\begin{algorithm}[H]
\caption{One iteration of ALG2 algorithm for \eqref{aug-mfp-h}.}
\label{alg:2}
\begin{algorithmic}
\STATE $\bullet$ Step A: update $\Phi_h^\ell$.
Find $\Phi_h^\ell\in V_h^k$ such that the equation \eqref{scalar-rdh} holds.
\STATE $\bullet$ Step B/C: update $\bmu_h^{\ell}, \bmu_h^{*,\ell}$.
First, find $\rho_h^\ell$ such that it is the minimizer to
\eqref{dualB-red}. 
Then update $m_{h}^{0,\ell}, m_{h}^{1,\ell}, s_{h}^{\ell}$ according to \eqref{u123}.
Finally, update $\bmu_h^{*,\ell}$ according to \eqref{utous}.
\end{algorithmic}
\end{algorithm}
We  note that positivity of density approximation $\rho_h$
 can be easily enforced in the pointwise
 optimization problem \eqref{dualB-red}.

\subsection{High-order FEM for strongly reversible reaction diffusion systems}
We now present the high-order FEM discretization of 
the variational time implicit scheme \eqref{OJKOs}
and discuss its practical (modified) ALG2 implementation.
Given time step size $\Delta t>0$ and 
density approximations \[\bmr_h^{\text{old}}=(\rho_{1,h}^{\text{old}},\cdots, \rho_{M,h}^{\text{old}})
\in [W_h^k]^M\] at the previous time step, 
find $\bmu_h, 
\bmu_h^*\in [W_h^k]^{3M+R}$, and $\bmp_h\in [V_h^{k}]^M$, such that
\begin{align}
\label{aug-mfp-hs}
\inf_{\bmu_h\in [W_h^k]^{3M+R}} \;\;
    \sup_{\bmp_h\in [V_h^{k}]^M, \bmu_h^*\in [W_h^k]^{3M+R}} 
   \underline{L_{r,h}}(\bmp_h, \bmu_h, \bmu_h^*),
\end{align}
where  
% \[
% \bmr_h^{\text{old}}=
% (\rho_{1,h}^{\text{old}},\cdots, \rho_{M,h}^{\text{old}})
% \]
% is the collection of densities at the previous time step,
\[
\bmu_h=(\rho_{1,h}, m_{1,h}^0, m_{1,h}^1,
\cdots, 
\rho_{M,h}, m_{M,h}^0, m_{M,h}^1,
s_{1,h},\cdots, s_{R,h})
\]
is the collection of densities $\bmr_h$, fluxes
\[\bmm_h=(m_{1,h}^0, m_{1,h}^1,\cdots, m_{M,h}^0, m_{M,h}^1),\] and source terms 
$\bms_h=(s_{1,h}, \cdots, s_{R,h})$, 
$\bmu_h^*$ is its dual, $\bmp_h=(\Phi_{1, h}, \dots, \Phi_{M,h})$,
and the discrete augmented Lagrangian is 
\begin{align}
\label{LrhZ}
\underline{L_{r,h}}(\bmp_h, \bmu_h, \bmu_h^*):= &\;
   \underline{F_h}^*(\bmu_h^*)
+\underline{G_h}(\bmp_h)+(\bmu_h, \underline{\mathcal{D}}\bmp_h-\bmu_h^*)_h\nonumber\\
&\;+\frac{r}{2}(\underline{\mathcal{D}}\bmp_h-\bmu_h^*,
\underline{\mathcal{D}}\bmp_h-\bmu_h^*)_h.
\end{align}
Here 
the operators
\begin{align}
\label{ops}
\underline{\mathcal{D}}\bmp_h:=&\;\Big(-\Phi_{1,h}, 
\partial_{x_0}\Phi_{1, h},
\partial_{x_1}\Phi_{1,h},
\cdots, 
-\Phi_{1,h}, 
\partial_{x_0}\Phi_{1, h},
\partial_{x_1}\Phi_{1,h},\nonumber\\
&\;
\quad\quad\sum_{i=1}^M(\alpha_i^1-\beta_i^1)\Phi_{i,h}, \cdots,
\sum_{i=1}^M(\alpha_i^R-\beta_i^R)\Phi_{i,h}
\Big),\\
\label{Ghs}
\underline{G_h}(\bmp_h)
:=&\;\sum_{i=1}^M(\rho_{i,h}^{\text{old}}, \Phi_{i,h})_h,\\
\label{Fhrs}
   \underline{F_h}^*(\bmu_h^*)
:=&\;
\sup_{
\bmu_h\in [W_h^k]^{3M+R}
}(\bmu_h^*, \bmu_h)_h- 
   \underline{F_h}(\bmu_h),
\end{align}
% where 
% \begin{align*}
% \delta\Phi_{3i-2,h}=-\Phi_{i,h}, 
% (\delta\Phi_{3i-1,h},\delta\Phi_{3i,h})=\nabla\Phi_{i,h},\quad
% \forall 1\le i\le M,\\
% \delta\Phi_{3M+p,h}=
% \sum_{i=1}^M(\alpha_i^p-\beta_i^p)\Phi_{i,h}
% \quad
% \forall 1\le p\le R,
% \end{align*}
% where $\rho_h^{\text{old}}$ is the density approximation at the previous time step, 
and $\underline{F_h}$ is given as 
\begin{align}
\label{Fhr}
\underline{F_h}(\bmu_h):=&\;
\left(\sum_{i=1}^M\frac{|m_{i,h}^0|^2+|m_{i,h}^1|^2}{2V_{1,i}(\rho_{i,h})}+
\sum_{p=1}^R\frac{|s_{p,h}|^2}{2V_{2,p}(\bmr_h)}, 1\right)_h
\nonumber\\
&\;
+\Delta t \sum_{i=1}^M\mathcal{E}_{i,h}(\rho_{i,h}),
\end{align}
where the mobility functions are given in \eqref{mobs} and 
the discrete energy 
\[
\mathcal{E}_{i,h}(\rho_{i,h})=(\rho_{i,h}(\log(\rho_{i,h})-1), 1)_h.
\]

% The densities at next step is then taken to be 
% $\bmr_h^{\text{new}}=\bmu\rho_h$, which is part of the solution to the saddle point problem \eqref{aug-mfp-hs}.

We now discuss a modified implementation 
of the ALG2 algorithm \ref{alg:1} for the saddle point system \eqref{aug-mfp-hs}, where further componentwise splitting is introduced to drive down the overall computational cost.

\subsubsection{Step A: system case}
\label{stepA2}
Taking infinium of $\underline{L_{r,h}}$ with respect to $\bmp_h$, we arrive at 
a coupled system of constant coefficient reaction-diffusion equations:
find $\bmp_h^\ell\in [V_h^k]^M$ such that 
\begin{align}
\label{sys-rdh}
(\underline{\mathcal{D}}\bmp_h^{\ell}, \underline{\mathcal{D}}{\bm\Psi}_h)_h=
(\bmu_h^{*,\ell-1}-\frac{1}{r}\bmu_h^{\ell-1}, \underline{\mathcal{D}}{\bm\Psi}_h)_h
- \frac{1}{r}(\bmr_h^{\text{old}}, {\bm\Psi}_h)_h,
\end{align}
for all ${\bm\Psi}_h\in [V_h^k]^M$.
Using the definition in \eqref{ops}, we 
write the above system back using the physical variables: 
\begin{align*}
&(\Phi_{i,h}^{\ell}, \Psi_{i,h})_h+
(\nabla\Phi_{i,h}^{\ell}, \nabla\Psi_{i,h})_h
+\sum_{p=1}^R\sum_{j=1}^M\left((\alpha_j^p-\beta_j^p)\Phi_{j,h}^\ell,
(\alpha_i^p-\beta_i^p)\Psi_{i,h} 
\right)_h\\
&=\;
(-\rho_{i,h}^{*,\ell-1}+\frac{\rho_{i,h}^{\ell-1}-\rho_{i,h}^{\text{old}}}{r},
\Psi_{i,h})_h
\;+(\bmm_{i,h}^{*,\ell-1}-\frac{\bmm_{i,h}^{\ell-1}}{r},
\nabla\Psi_{i,h})_h\\
&\; \quad\quad+\sum_{p=1}^R\left(
s_{r,h}^{*,\ell-1}-
\frac{1}{r}s_{r,h}^{\ell-1},
(\alpha_i^p-\beta_i^p)\Psi_{i,h} 
\right)_h,
\end{align*}
for all $1\le i\le M$.
This coupled linear system might be expensive to solve. 
Here we propose to solve these $M$ equations in parallel by treating the coupling term on the left hand side of the above equation explicitly.
Specifically, for each $1\le i\le M$, we compute $\Phi_{i,h}\in V_h^k$
such that it solves the following scalar linear reaction-diffusion equation:
\begin{align}
\label{lag-coef}
&(\Phi_{i,h}^{\ell}, \Psi_{i,h})_h+
(\nabla\Phi_{i,h}^{\ell}, \nabla\Psi_{i,h})_h
+\sum_{p=1}^R\left((\alpha_i^p-\beta_i^p)\Phi_{i,h}^\ell,
(\alpha_i^p-\beta_i^p)\Psi_{i,h} 
\right)_h\nonumber\\
&=\;
(-\rho_{i,h}^{*,\ell-1}+\frac{\rho_{i,h}^{\ell-1}-\rho_{i,h}^{\text{old}}}{r},
\Psi_{i,h})_h
\;+(\bmm_{i,h}^{*,\ell-1}-\frac{\bmm_{i,h}^{\ell-1}}{r},
\nabla\Psi_{i,h})_h\nonumber\\
&\; \quad\quad+\sum_{p=1}^R\left(
s_{r,h}^{*,\ell-1}-
\frac{1}{r}s_{r,h}^{\ell-1},
(\alpha_i^p-\beta_i^p)\Psi_{i,h} 
\right)_h\nonumber\\
&\;\quad\quad-\sum_{p=1}^R\sum_{\overset{j=1}{j\not=i}}^M\left(
(\alpha_j^p-\beta_j^p)\Phi_{j,h}^{\ell-1},
(\alpha_i^p-\beta_i^p)\Psi_{i,h} 
\right)_h,
\end{align}
for all $\Phi_{i,h}\in V_h^k$. These are $M$ decoupled scalar constant-coefficient linear reaction-diffusion equations, which are easy to solve.

One may also solve the equation \eqref{lag-coef} sequentially (in a Gauss-Seidel manner), which uses the updated $\Phi_{j,h}^\ell$ for $j< i$ when computing the variable $\Phi_{i,h}^\ell$.

\subsubsection{Step B/C: system case}
\label{stepB2}
Similar to the scalar case in Subsection \ref{stepB1}, 
we first compute the solutions $\bmu_h^\ell$ 
according to the following system version of \eqref{dualB}:
\begin{align}
\label{dualBs}
\bmu^\ell_h = 
\argmin_{\bmu_h\in [W_h^k]^{3M+R}}
 \underline{F_h}(\bmu_h)+\frac{1}{2r}(\bmu_h-r\overline{\bmu}_h,\bmu_h-r\overline{\bmu}_h)_h,
\end{align}
where 
\[
\overline{\bmu}_h:=\underline{\mathcal{D}}\bmp_h^\ell+\frac{1}{r}\bmu_h^{\ell-1},
\]
with the understanding that 
$\underline{\mathcal{D}}\bmp_h^\ell$ is its interpolation onto the space $[W_h^k]^{3M+R}$, 
and then update $\bmu_h^{*,\ell}$ according to 
\begin{align}
\label{utouss}
\bmu_h^{*,\ell}=\overline{\bmu}_h-\bmu^\ell_h/r.
\end{align}

Again, we solve the 
problem \eqref{dualBs} by first locally expressing all other variables in terms of the densities, and then solve pointwise optimization problems 
for these densities on each quadrature point.
\begin{proposition}
    \label{thm:uh0s}
Let $\bmu_h^\ell$ be the solution to 
\eqref{dualBs}. 
Then there holds
\begin{align}
\label{u123s}
m_{i,h}^{k,\ell} = &\; \frac{r V_{1,i}(\rho_{i,h}^\ell)}{r+V_{1,i}(\rho_{i,h}^\ell)}\overline{m}_{i,h},
\forall k=0,1, \text{and } 1\le i\le M,\\ 
\label{u123X}
s_{p,h}^\ell = &\; \frac{rV_{2,p}(\bmr_h^\ell)}{r+
V_{2,p}(\bmr_h^\ell)}\overline{s}_{p,h},\quad \forall 1\le p\le R,
\end{align}
and 
the collection of densities $\bmr_h^\ell$
is the minimizer of the following reduced problem:
\begin{align}
\label{dualB-reds}
\argmin_{\bmr_{h}\in [W_h^k]^M}&\;
\sum_{i=1}^M \frac{1}{2r}\left(|\rho_{i,h}-r\overline{\rho}_{i,h}|^2,1\right)_h
+\sum_{i=1}^M  \left(\frac{r^2(|\overline{m}_{i,h}^0|^2+\overline{m}_{i,h}^0|^2)}{2(r+
V_{1,i}(\rho_{i,h}))}
 ,1\right)_h
\nonumber\\
&\; 
+\sum_{p=1}^R\left(\frac{r^2|\overline{s}_{p,h}|^2}{2(r+V_{2,p}(\bmr_{h}))}
 ,1\right)_h+
\Delta t\,
 \sum_{i=1}^M \mathcal{E}_{i,h}(\rho_{i,h}).
\end{align}
\end{proposition}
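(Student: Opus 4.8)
The plan is to mirror the proof of Proposition~\ref{thm:uh0}, exploiting the separable structure of \eqref{dualBs}. First I would note that, since the discrete inner product \eqref{num-int} is a weighted sum over the Gauss--Legendre nodes $\{\bmxi_i\}$ and a function in $W_h^k$ is determined by its values at those nodes \eqref{W-basis}, the minimization \eqref{dualBs} decouples into $N_W$ independent scalar problems, one per quadrature node. Within each pointwise problem the only couplings are: each flux pair $(m_{i,h}^0, m_{i,h}^1)$ to the single density $\rho_{i,h}$ through $V_{1,i}$, and each source $s_{p,h}$ to the full density vector $\bmr_h$ through $V_{2,p}$; distinct flux/source unknowns never interact with one another. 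Hence the flux and source blocks can be minimized out first, with the densities held fixed.

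Next I would carry out that inner minimization. Setting $\partial_{m_{i,h}^k}=0$ in the pointwise objective built from $\underline{F_h}$ in \eqref{Fhr} together with the quadratic penalty gives $\frac{m_{i,h}^k}{V_{1,i}(\rho_{i,h})}+\frac1r\big(m_{i,h}^k-r\overline m_{i,h}^k\big)=0$, which is \eqref{u123s}; similarly $\partial_{s_{p,h}}=0$ gives \eqref{u123X}. Substituting these closed-form expressions back and using the one-line identity $\min_{m}\big(\tfrac{m^2}{2V}+\tfrac1{2r}(m-r\overline m)^2\big)=\tfrac{r^2\overline m^2}{2(r+V)}$, applied with $V=V_{1,i}(\rho_{i,h})$ and with $V=V_{2,p}(\bmr_h)$, collapses each flux/source contribution into the corresponding term of \eqref{dualB-reds}, while the density penalties $\tfrac1{2r}|\rho_{i,h}-r\overline\rho_{i,h}|^2$ and the entropy terms $\Delta t\sum_i\mathcal E_{i,h}(\rho_{i,h})$ pass through unchanged. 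What remains is exactly the reduced functional \eqref{dualB-reds}; its minimizer $\bmr_h^\ell$, still a per-node problem but now coupling the $M$ densities at each node through $V_{2,p}$, fed back into \eqref{u123s}--\eqref{u123X}, reconstructs the full vector $\bmu_h^\ell$.

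The step I expect to need the most care is justifying the partial-minimization interchange together with the well-posedness of the substituted formulas. Here I would invoke strict convexity: for $\rho_{i,h}\ge 0$ and $V_{1,i},V_{2,p}>0$ the pointwise objective is strictly convex and coercive in the $(m_{i,h}^0, m_{i,h}^1, s_{p,h})$ block, so the partial minimum exists, is unique, and depends continuously on $\bmr_h$, and the standard partial-minimization lemma then identifies the joint minimizer with the pair given by the minimizer of \eqref{dualB-reds} and the plug-in expressions \eqref{u123s}--\eqref{u123X}. I would also note in passing that the logarithmic-mean mobilities in \eqref{mobs} remain strictly positive wherever the densities are positive, so \eqref{u123X} is well defined; this is routine enough to state without a separate argument.
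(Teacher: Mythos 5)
Your proposal is correct and follows essentially the same route as the paper, which proves the scalar analogue (Proposition~\ref{thm:uh0}) by setting the derivatives with respect to the flux and source variables to zero, obtaining the closed-form relations, and substituting back to get the reduced density problem; your componentwise first-order conditions and the identity $\min_{m}\bigl(\tfrac{m^2}{2V}+\tfrac{1}{2r}(m-r\overline m)^2\bigr)=\tfrac{r^2\overline m^2}{2(r+V)}$ reproduce exactly \eqref{u123s}, \eqref{u123X}, and \eqref{dualB-reds}. The extra care you take with the pointwise decoupling via the nodal basis \eqref{W-basis} and with strict convexity in the flux/source block to justify the partial-minimization step is a welcome (if routine) strengthening of what the paper leaves implicit.
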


By the choice of the integration rule space \eqref{fes-W} and its nodal basis \eqref{W-basis}, 
it is clear that the minimization problem \eqref{dualB-reds}
 can be solved in a pointwise fashion per quadrature point. 
On each quadrature point, it is an $M$-dimensional minimization problem, where the coupling is introduced in the 
reaction term in the second row of \eqref{dualB-reds}. 
Again, we propose to solve $M$ independent single-variable minimization problems in parallel by treating the reaction term semi-implicitly. 
Specifically, the solution 
$\rho_{i,h}^\ell$ for each $1\le i\le M$ is obtained by solving 
the following problems in parallel:
\begin{align}
\label{dualB-reds1D}
\argmin_{\rho_{i,h}\in W_h^k}&\;
\frac{1}{2r}\left(|\rho_{i,h}-r\overline{\rho}_{i,h}|^2,1\right)_h
+\left(\frac{r^2(|\overline{m}_{i,h}^0|^2+\overline{m}_{i,h}^0|^2)}{2(r+
V_{1,i}(\rho_{i,h}))}
 ,1\right)_h
\nonumber\\
&\; 
+\sum_{p=1}^R\left(\frac{r^2|\overline{s}_{p,h}|^2}{2(r+V_{2,p}(\tilde{\bmr}_{h}))}
 ,1\right)_h+
\Delta t\mathcal{E}_{i,h}(\rho_{i,h}).
\end{align}
Here 
\[
\tilde{\bmr}_{h}^i
=(\rho_{1,h}^{\ell-1},\cdots, 
\rho_{i-1,h}^{\ell-1}, 
\rho_{i,h}, 
\rho_{i+1,h}^{\ell-1},\cdots,
\rho_{M,h}^{\ell-1}), 
\]
i.e., all other densities are evaluated explicitly at level $\ell-1$.
By the choice of mobility functions in \eqref{mobs}, it is easy to show that 
the problem \eqref{dualB-reds1D} is convex and hence has a unique global minimizer.
We collect this modified ALG2 implementation in the following algorithm.

\begin{algorithm}[H]
\caption{One iteration of modified ALG2 algorithm for \eqref{aug-mfp-hs}.}
\label{alg:3}
\begin{algorithmic}
\STATE $\bullet$ Step A: update $\bmp_{h}^\ell$.
Find $\Phi_{i,h}^\ell\in V_h^k$ such that the equation \eqref{lag-coef} holds for each $1\le i\le M$.
\STATE $\bullet$ Step B/C: update $\bmu_h^{\ell}, \bmu_h^{*,\ell}$.
First, find $\rho_{i,h}^\ell$ such that it is the minimizer to
\eqref{dualB-reds1D} for each $1\le i\le M$. 
Then update $m_{i,h}^{k,\ell}$ for $k=0,1$ according to \eqref{u123s}
and update $s_{p,h}^\ell$ for $1\le p\le R$
according to \eqref{u123X}.
Finally, update $\bmu_h^{*,\ell}$ according to \eqref{utouss}.
\end{algorithmic}
\end{algorithm}
% \begin{remark}
% We specifically note that the use of the integration rule space $W_h^k$ and numerical integration is crucial for the efficient implementation of Step B in the ALG2 algorithm, which leads to a pointwise update per integration point. If this space and numerical integration were not chosen carefully, additional unnecessary degrees of freedom coupling maybe introduced, which slows down the overall algorithm. 
% \end{remark}
\subsection{High-order FEM for reversible reaction-diffusion systems with detailed balance}
For a reversible reaction-diffusion system with detailed balance, the spatial discretization and the corresponding practical ALG2 implementation 
are the same as the one in a strongly reversible case, with the only change that the discrete energy
now takes the following form:
\[
\mathcal{E}_{i,h}(\rho)=(\rho(\log(\kappa_i\rho)-1), 1)_h,
\]
% according to \eqref{db-ener}, 
where 
$\kappa_i>0$ depends on the reaction rates. 

Here a small modification (with a reduced cost) is need to simulate the 
reversible Gray-Scott model in Example \ref{rgs-model}
since it does not include diffusion for the last two species. 
Specifically, we do not need flux approximations for the last two species, and 
the variables and operators in the fully discrete algorithm 
\eqref{aug-mfp-hs} for the system \eqref{rGS} is recorded below for completeness:
\begin{subequations}
    \label{rGS-full}
    \begin{align}
    \bmu_h =&\; (\rho_{1,h}, m_{1,h}^0, m_{1,h}^1, 
    \rho_{2,h}, m_{2,h}^0, m_{2,h}^1,\rho_{3,h}, \rho_{4,h}, 
    s_{1,h},s_{2,h},s_{3,h}),\\
    \Phi_h = &\;(\Phi_{1,h},\Phi_{2,h},\Phi_{3,h},\Phi_{4,h}),\\
    \underline{\mathcal{D}}\bmp_h:=&\;\Big(-\Phi_{1,h}, 
\partial_{x_0}\Phi_{1, h},
\partial_{x_1}\Phi_{1,h},
-\Phi_{2,h}, 
\partial_{x_0}\Phi_{2, h},
\partial_{x_1}\Phi_{2,h},\nonumber\\
&\;-\Phi_{3,h}, 
-\Phi_{4,h},
\Phi_{1,h}-\Phi_{2,h}, 
\Phi_{2,h}-\Phi_{3,h}, 
\Phi_{1,h}-\Phi_{4,h} 
\Big),\\
\underline{G_h}(\bmp_h)
:=&\;\sum_{i=1}^4(\rho_{i,h}^{\text{old}}, \Phi_{i,h})_h,\\
   \underline{F_h}(\bmu_h)
:=&\;
\left(\sum_{i=1}^2\frac{|m_{i,h}^0|^2+|m_{i,h}^1|^2}{2V_{1,i}(\rho_{i,h})}+
\sum_{p=1}^3\frac{|s_{p,h}|^2}{2V_{2,p}(\bmr_h)}, 1\right)_h\nonumber\\
&\;+\Delta t \sum_{i=1}^4\mathcal{E}_{i,h}(\rho_{i,h}),
    \end{align}
\end{subequations}
where the parameters and mobility functions are given in \eqref{rGS-coo}.
Note that Step A of Algorithm \ref{alg:3} now becomes 
two scalar linear reaction-diffusion equation updates for 
$\Phi_{1,h}^\ell$
and 
$\Phi_{2,h}^\ell$, and two simple mass matrix updates 
for 
$\Phi_{3,h}^\ell$
and 
$\Phi_{4,h}^\ell$.

% % ---------------------------------------------
% % ---------------------------------------------
% % ---------------------------------------------
\section{Numerical experiments}
\label{sec:num}
In this section, we conduct comprehensive 2D experiments to show the efficiency and effectiveness of the proposed numerical algorithms. 
% We restrict ourself to structured rectangular meshes. 
% We first numerical verify the convergence of rate of the algorithm related to the mesh size and polynomial degree for steady state computations. 
Throughout, we take the augmented Lagrangian parameter to be $r=1$, and perform 200 ALG iterations  in each time step for all test cases.
Our numerical simulations are performed using the open-source finite-element software {\sf NGSolve} \cite{Schoberl16}, \url{https://ngsolve.org/}.

% % % ---------------------------------------------
% % % ---------------------------------------------
\subsection{Spatial convergence rates}
\label{ex1}
We first consider the nonlinear Fokker-Plank equation 
\[
\partial_t \rho - \triangle \rho^3 = \nabla\cdot (\rho \,x),
\]
on the domain $\Omega=[-1,1]\times[-1,1]$
with homogeneous Neumann boundary conditions.
It is a Wasserestein gradient flow of the form \eqref{wg} with energy 
\[
\mathcal{E}(\rho):=\int_{\Omega}\left(\frac12\rho(x)^3+
\frac12(x_0^2+x_1^2)\rho(x) \right) dx,
\]
where $x=(x_0,x_1)$.
This problem reaches a steady state solution 
\[
\rho_{\text{steady}}(x_1, x_2) =\sqrt{\frac{(2C-(x_0^2+x_1^2))_+}{3}},
\]
that satisfies either 
\[
\frac{\delta \mathcal{E}}{\delta \rho}=\frac32\rho^2+\frac12(x_0^2+x_1^2)=C,
\] 
or $\rho=0$. 
Here the constant $C$ depends on the total mass of the initial condition, which we set to be $C=2$ so that the solution on $\Omega$ is positive and smooth.

We perform a mesh convergence study 
for the scheme \eqref{aug-mfp-h} using  Algorithm \ref{alg:2}
 with polynomial degree $k=1, 2, 4$ on a sequence of uniformly refined meshes. 
 The coarse mesh is of size $8\times 8$ for $k=1$, 
 $4\times 4$ for $k=2$, and $2\times 2$ for $k=4$, so that the total number of degrees of freedom for $\Phi$ is the same on each mesh level for different polynomial degrees.
 We take large time step size with $\Delta t=1$, and perform 10 time steps of simulation where the numerical solution reaches the steady state. The $L^2$-convergence in the density $\rho$
 is recorded in Table~\ref{tab:1}. We clearly observe the 
 $k+1$-th order of convergence for each case. 
 In particular, the higher order method leads to a smaller error when a same number of total degrees of freedom is used.

\begin{table}[tb]
\centering
\caption{Convergence rates of scheme~\ref{aug-mfp-h} 
with different polynomial degree $k$ applied to a 2D steady Fokker Plank equation.}
\begin{tabular}{c|ll|ll|ll}
\toprule
$\mathrm{dim}(V_h^k)$&
\multicolumn{2}{c|}{$k=1$} &
\multicolumn{2}{c|}{$k=2$} &
\multicolumn{2}{c}{$k=4$}\\
\midrule
81&2.362e-03 & -- &2.409e-04 & -- &2.628e-05&--\\
289&5.923e-04 & 2.00 &3.298e-05 & 2.87 &1.424e-06&4.21\\
1089&1.482e-04 & 2.00 &4.232e-06 & 2.96 &5.589e-08&4.67\\
4225&3.705e-05 & 2.00 &5.326e-07 & 2.99 &1.884e-09&4.89\\
\bottomrule
\end{tabular}
\label{tab:1}
\end{table}

\subsection{Aggregation-drift-diffusion equations}
\label{ex2}
We consider Wasserstein gradient flow \eqref{wg} with 
five choices of energies \eqref{ener} that include aggregation effects.
The specific form of the energy, along with the  
domain size $L$ where  the computational domain $\Omega=[-L,L]\times [-L, L]$, and the initial conditions are given in Table \ref{tab:2}.
Here $\chi_{[-3,3]\times[-3,3]}$ is the characteristic function on $[-3,3]\times[-3,3]$
for Case 5.
All cases were considered in \cite{LiWang22}, 
except Case 4 which adds an additional diffusion to the energy in Case 3.
\begin{table}[tb]
\centering
\caption{Example \ref{ex2}. Five choices of energies, domain size, 
and initial condition.}
\begin{tabular}{c|c|c|c|c|c}
\toprule
Case& $\alpha U_m(\rho)$ &
$V(x)$ &
$W(x)$ &$L$&I.C.
\\
\midrule
1 & 0 & 0 & $\frac{|x|^4}{4}-\frac{|x|^2}{2}$&1&
$\frac{25}{2\pi}\exp(-\frac{25}{2}|x|^2)
$
\\[.4ex]
2 & 0 & 0 & $\frac{|x|^2}{2}-\log(|x|)$&1.5
&
$\frac{25}{8\pi}\exp(-\frac{25}{8}|x|^2)
$
\\[.4ex]
3 & 0 & $-\frac14\log(|x|)$ & $\frac{|x|^2}{2}-\log(|x|)$&1.5
&
$\frac{25}{8\pi}\exp(-\frac{25}{8}|x|^2)
$
\\[.4ex]
4 & 
$0.1\rho^2$
& $-\frac14\log(|x|)$ 
& $\frac{|x|^2}{2}-\log(|x|)$&1.5
&
$\frac{25}{8\pi}\exp(-\frac{25}{8}|x|^2)
$
\\[.4ex]
5 & 
$0.1\rho^3$
& 0
& $-\exp(-|x|^2)/\pi$&4
&
$0.25\chi_{[-3,3]\times[-3,3]}
$
\\[.4ex]
\bottomrule
\end{tabular}
\label{tab:2}
\end{table}

Note that the interaction kernel $W(x)$ 
for Cases 2/3/4 is singular at zero.
Here we use a higher-order numerical integration rule, which avoids the evaluation of $W(x)$ at zero to compute the convolution
\[
W*\rho(\bmxi_i), \quad \forall 1\le i\le N_W,
\]
at the quadrature points $\{\bmxi_i\}_{i=1}^{N_W}$.
Fast Fourier transform is used to evaluate these 
convolutions all together. 

For all cases, we take the computational mesh to be a $32\times 32$ uniform square mesh, and use polynomial degree $k=4$ in the scheme 
\eqref{aug-mfp-h}. 
We take time step size $\Delta t=0.05$ for the first four cases, and 
$\Delta t = 0.5$ for the last case.
The final time of simulation is $T=10$
for Case 1, $T=3$ for Cases 2/3/4, and $T=15$
for Case 5.
Snapshots of the density contours at different times are shown in Figure \ref{fig:1}.
We find the results for Cases 1/2/3 and 5 are  qualitatively similar to the results reported in \cite{LiWang22}. 
In particular, Case 1 converges to a steady Dirac ring solution; Case 2 converges to a steady constant solution with a circular shape; Case 3 converges to a characteristic function for the torus due to the drift effects that pushes away the density from the origin; 
and the competition between median range aggregation with 
short/long range diffusion are observed for Case 5.
Moreover,  the diffusion effects of Case 4 comparing with Case 3 are also clearly seen. 
\begin{figure}[tb]
\centering
\subfigure[Case 1. Left to right time: $t=0.5, 1.5, 3.0, 6.0, 10$]{
\label{fig:11}
\includegraphics[width=0.192\textwidth]{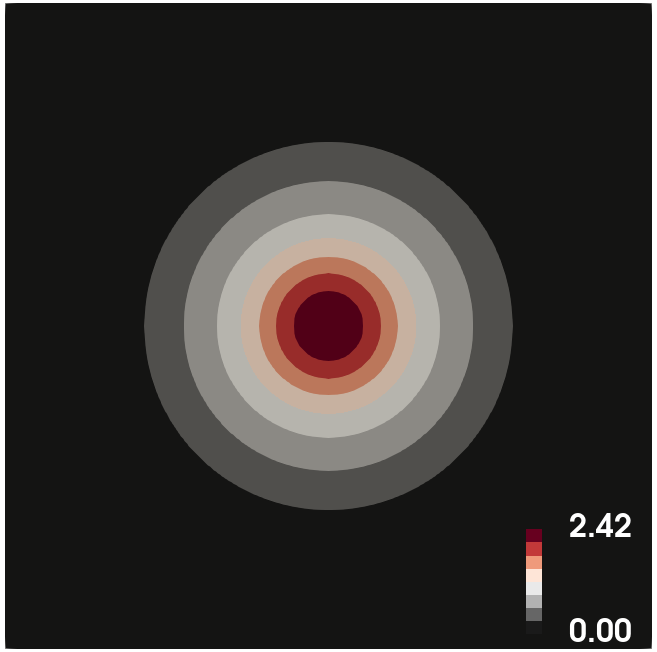}
\includegraphics[width=0.192\textwidth]{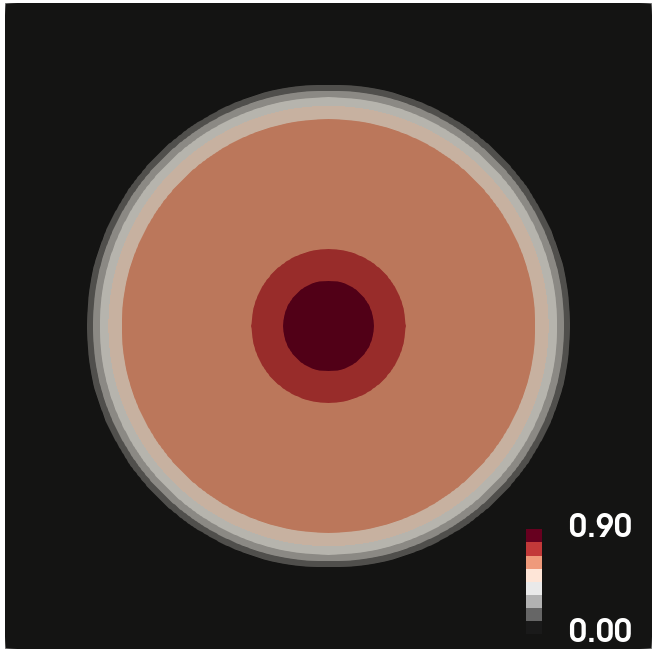}
\includegraphics[width=0.192\textwidth]{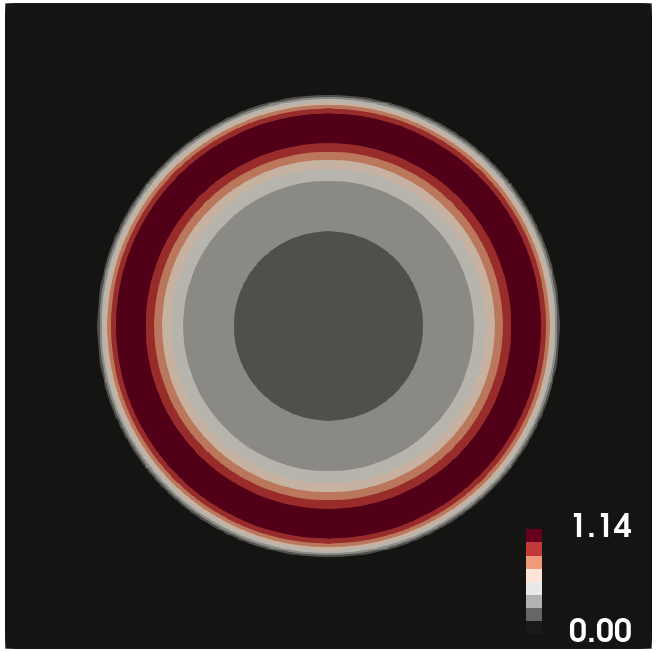}
\includegraphics[width=0.192\textwidth]{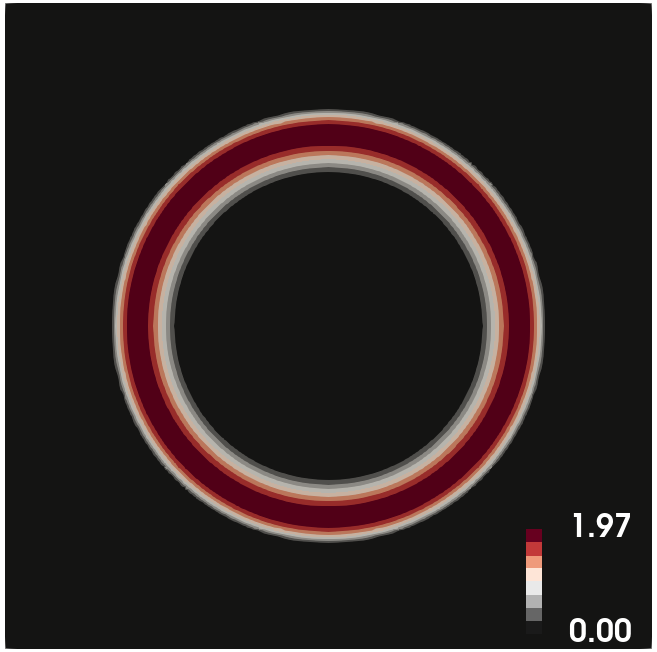}
\includegraphics[width=0.192\textwidth]{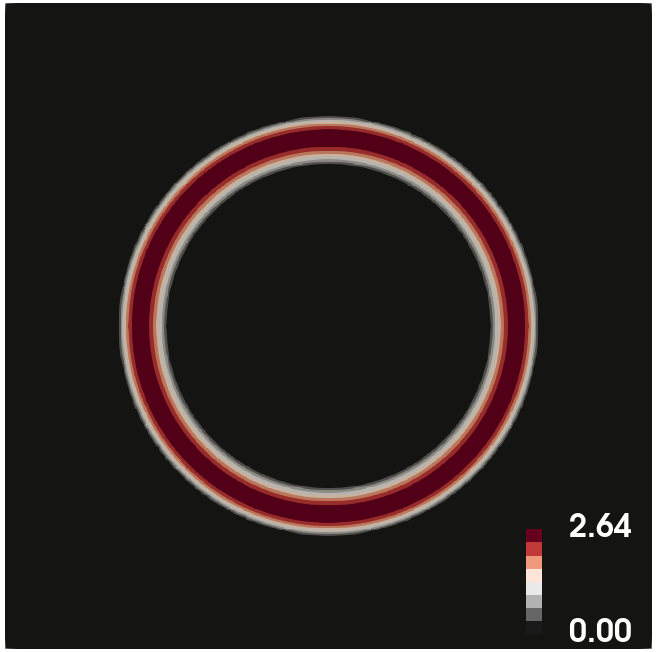}
}
\subfigure[Case 2. Left to right time: $t=0.2, 0.5, 1.5, 2.0, 3.0$]{
\label{fig:12}
\includegraphics[width=0.192\textwidth]{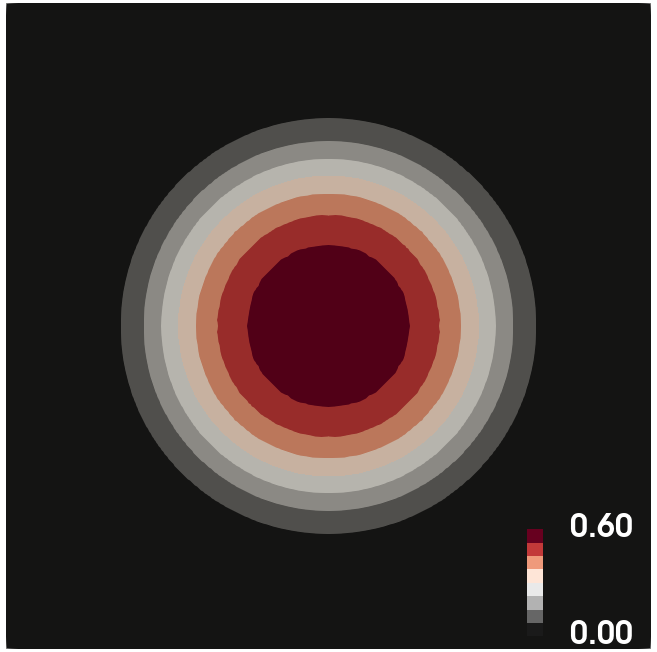}
\includegraphics[width=0.192\textwidth]{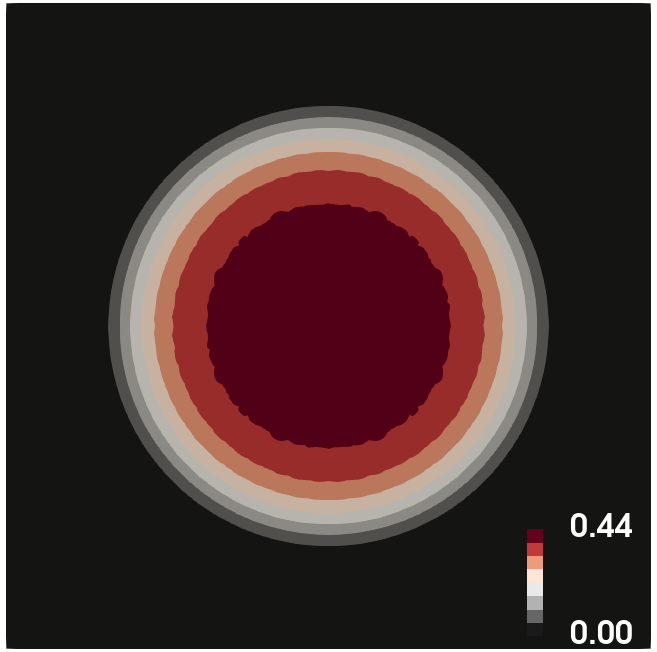}
\includegraphics[width=0.192\textwidth]{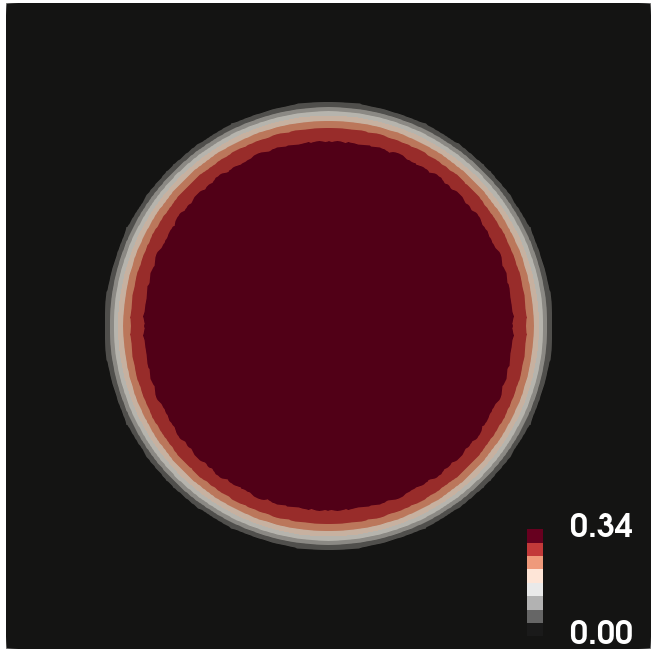}
\includegraphics[width=0.192\textwidth]{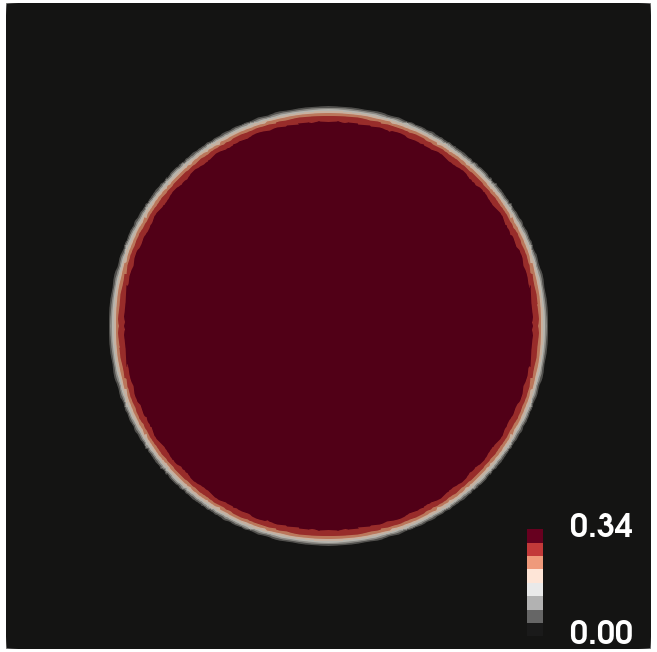}
\includegraphics[width=0.192\textwidth]{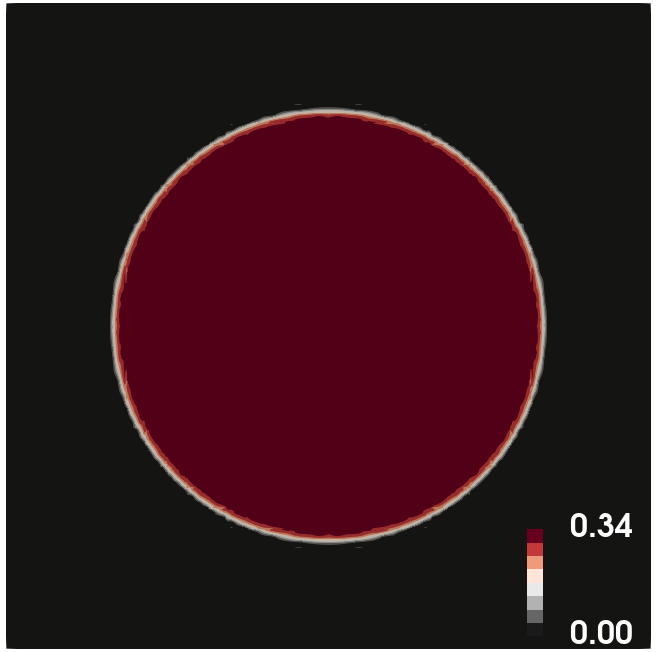}
}
\subfigure[Case 3. Left to right time: $t=0.2, 0.5, 1.5, 2.0, 3.0$]{
\label{fig:13}
\includegraphics[width=0.192\textwidth]{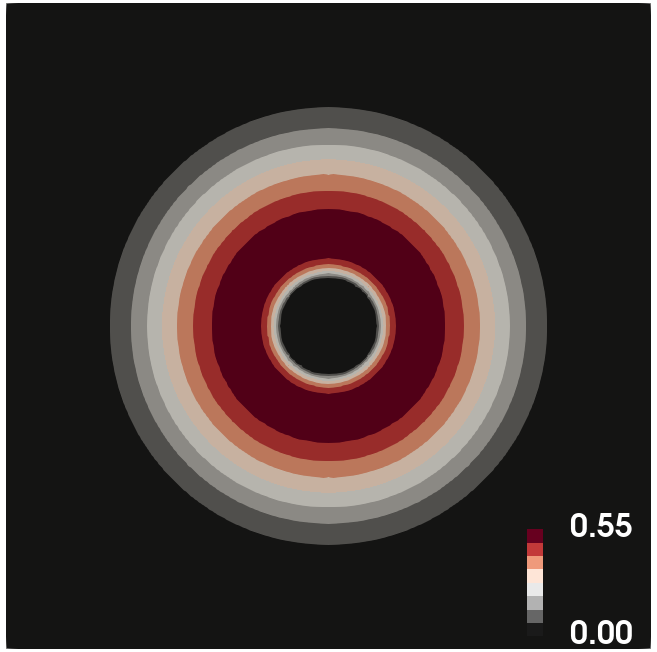}
\includegraphics[width=0.192\textwidth]{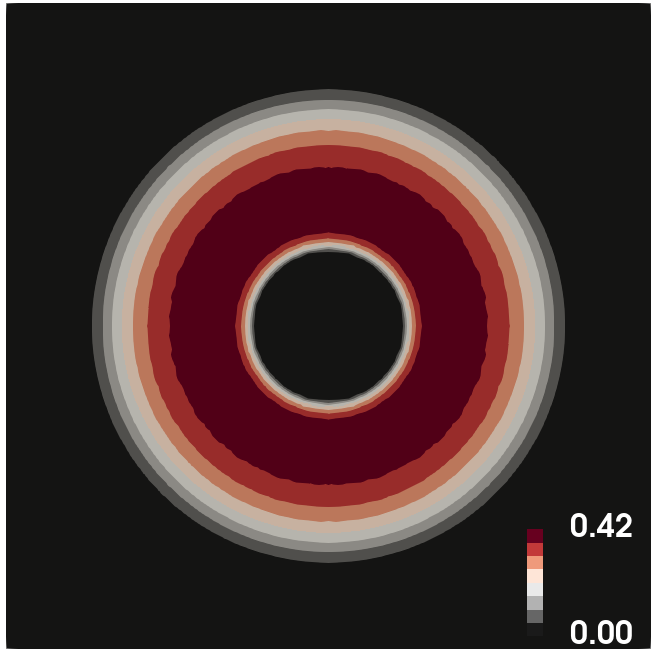}
\includegraphics[width=0.192\textwidth]{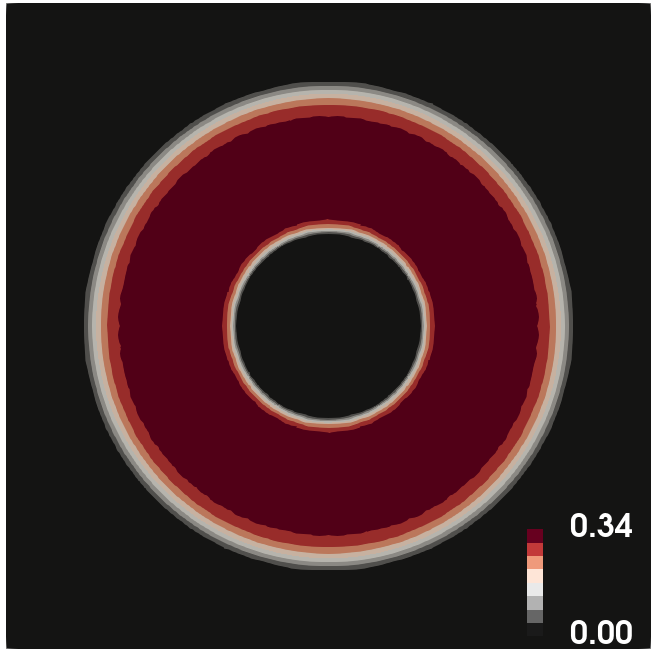}
\includegraphics[width=0.192\textwidth]{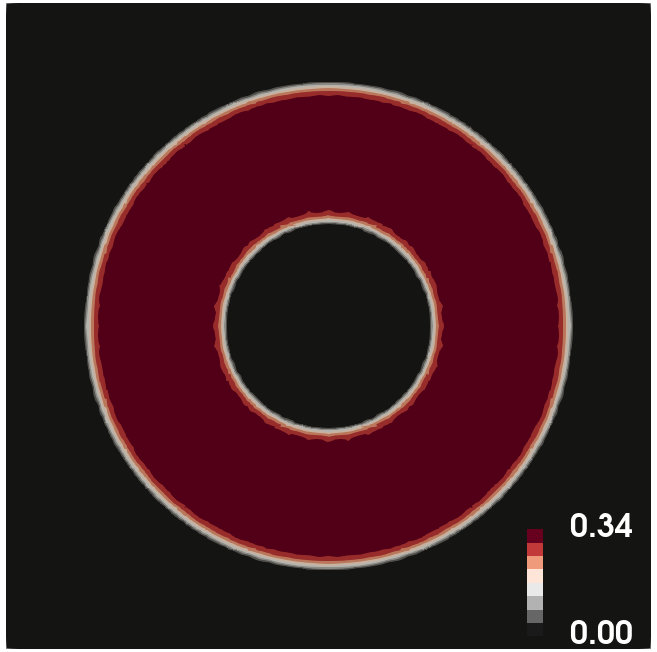}
\includegraphics[width=0.192\textwidth]{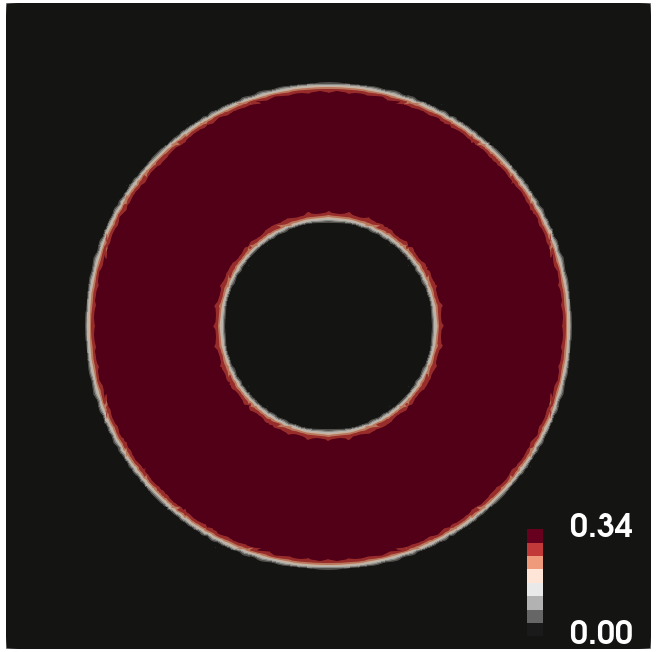}
}
\subfigure[Case 4. Left to right time: $t=0.2, 0.5, 1.5, 2.0, 3.0$]{
\label{fig:14}
\includegraphics[width=0.192\textwidth]{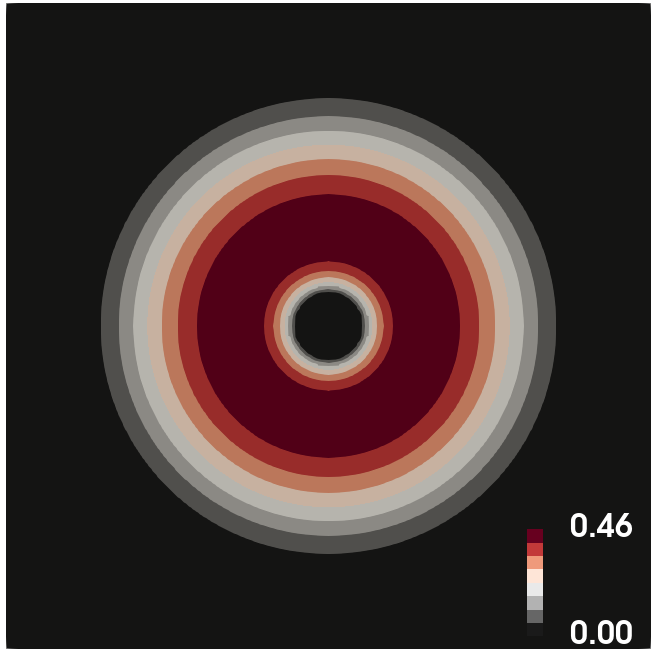}
\includegraphics[width=0.192\textwidth]{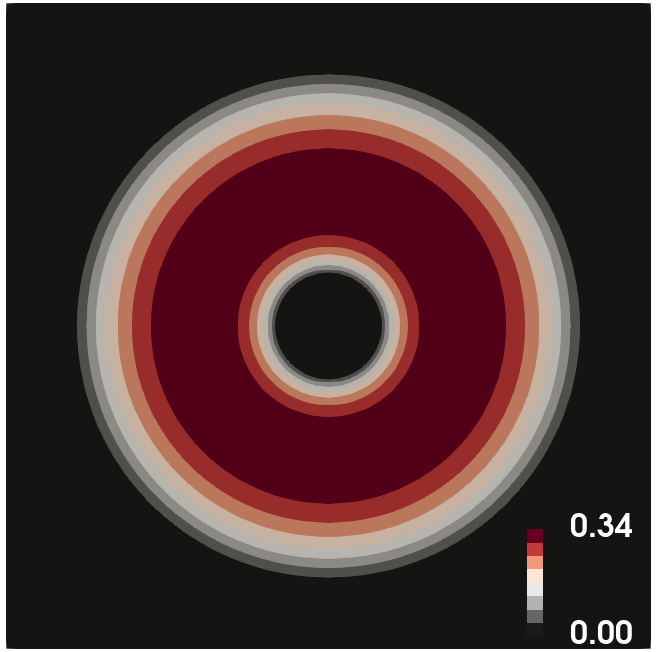}
\includegraphics[width=0.192\textwidth]{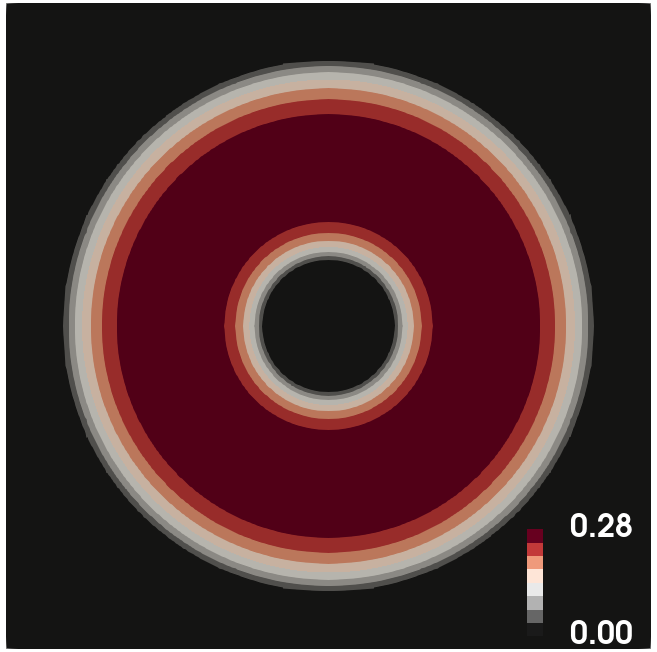}
\includegraphics[width=0.192\textwidth]{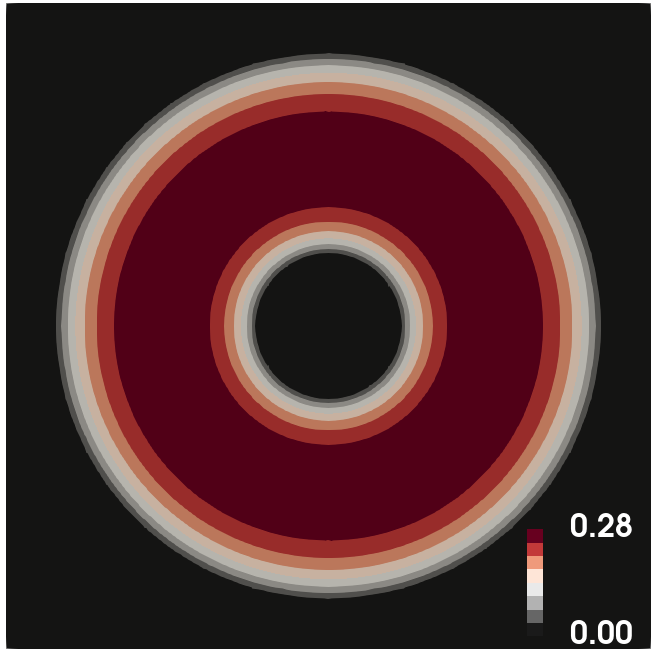}
\includegraphics[width=0.192\textwidth]{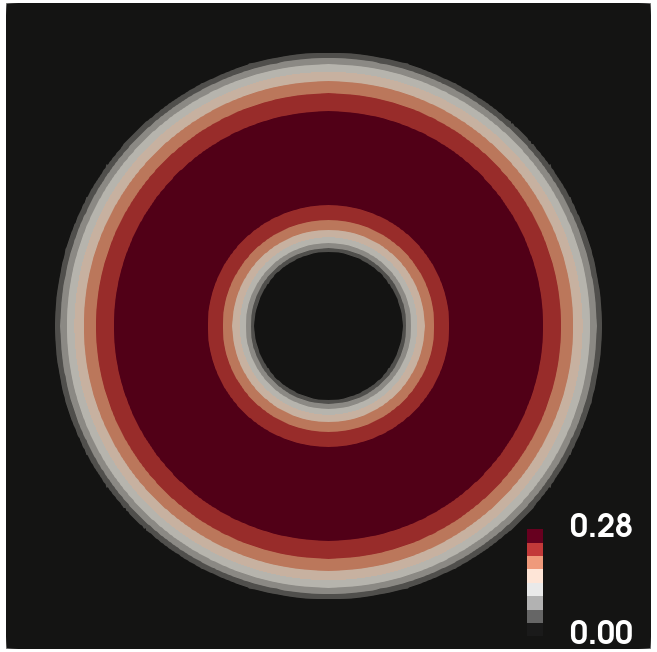}
}
\subfigure[Case 5. Left to right time: $t=2, 4, 6, 10, 15$]{
\label{fig:15}
\includegraphics[width=0.192\textwidth]{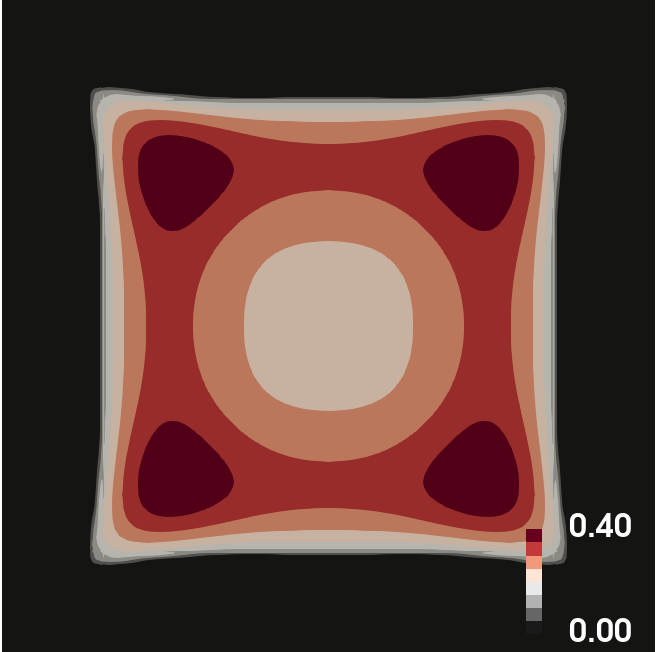}
\includegraphics[width=0.192\textwidth]{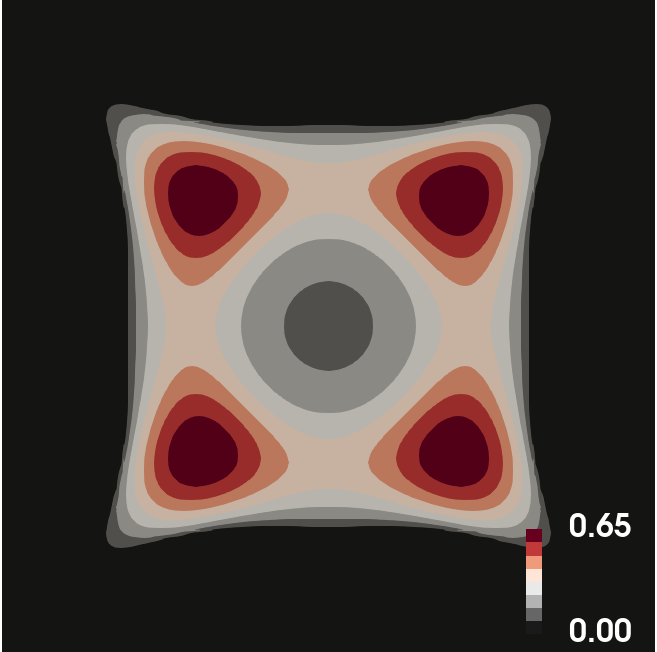}
\includegraphics[width=0.192\textwidth]{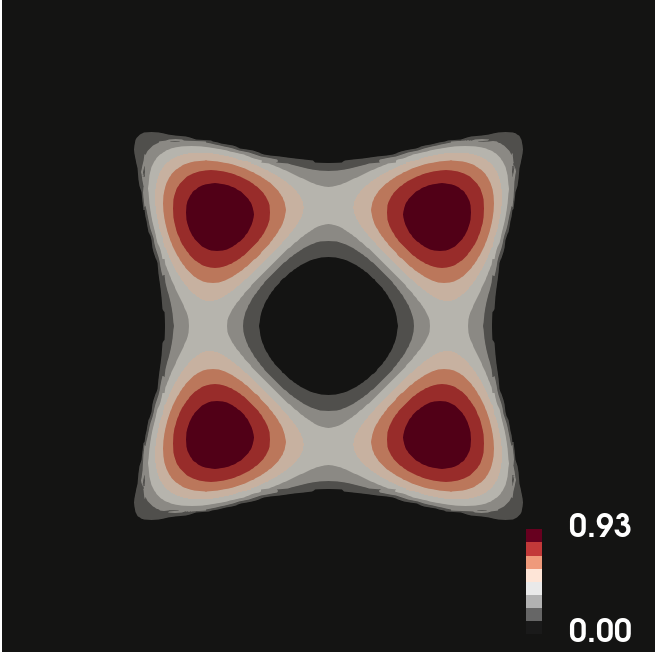}
\includegraphics[width=0.192\textwidth]{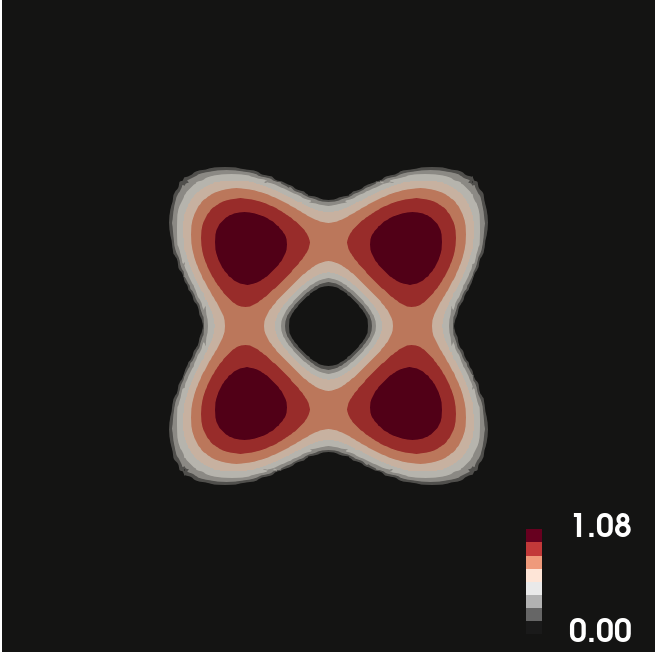}
\includegraphics[width=0.192\textwidth]{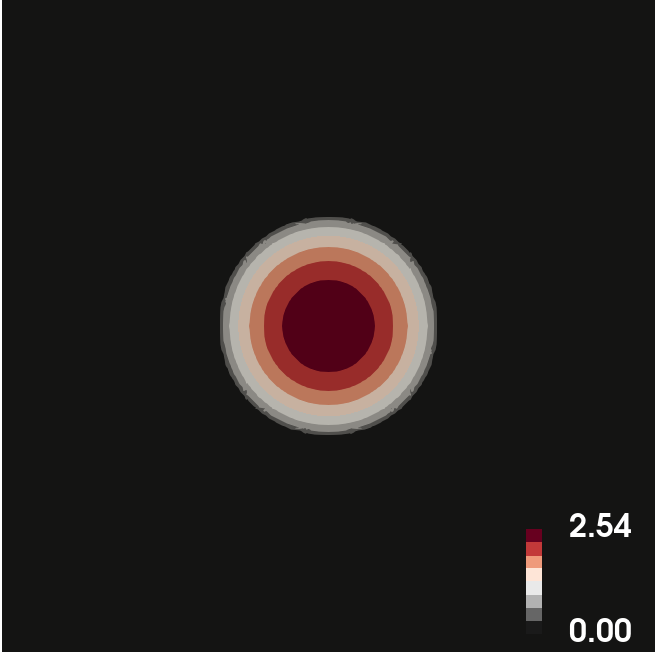}
}
\caption{Example \ref{ex2}. Snapshots of density contours at different times for different test cases.}
\label{fig:1}
\end{figure}

\subsection{Scalar reaction-diffusion equation}
\label{ex3}
We take the Case 4 energy in Table \ref{tab:2}, but 
 consider the reaction-diffusion equation \eqref{rd}.
Three choices of mobility coefficient $V_2(\rho)$ are used in this example, namely, 
\begin{align}
\begin{cases}
\text{Type 1: } V_2(\rho) = 0.1,\\[.4ex]
\text{Type 2: } V_2(\rho) = 0.1\rho,\\[.4ex]
\text{Type 3: } V_2(\rho) = 0.1\frac{\rho-1}{\log(\rho)}.
\end{cases}
\end{align}
The same discretization setup as in the previous example is used, i.e.,
using polynomial degree $k=4$ on a $32\times 32$ uniform mesh with
time step size $\Delta t = 0.05$, and final time $T=3$.

Snapshots of the density contours for each case at different times are shown in Figure \ref{fig:2}.
It is clear from the color range of these plots that reaction effects leads to mass loss, with the Type 1 reaction has the most mass loss, followed by Type 3 reaction.
\begin{figure}[tb]
\centering
\subfigure[Case 4 energy, Type 1 reaction. Left to right time: $t=0.2, 0.5, 1.5, 2.0, 3.0$]{
\label{fig:21}
\includegraphics[width=0.192\textwidth]{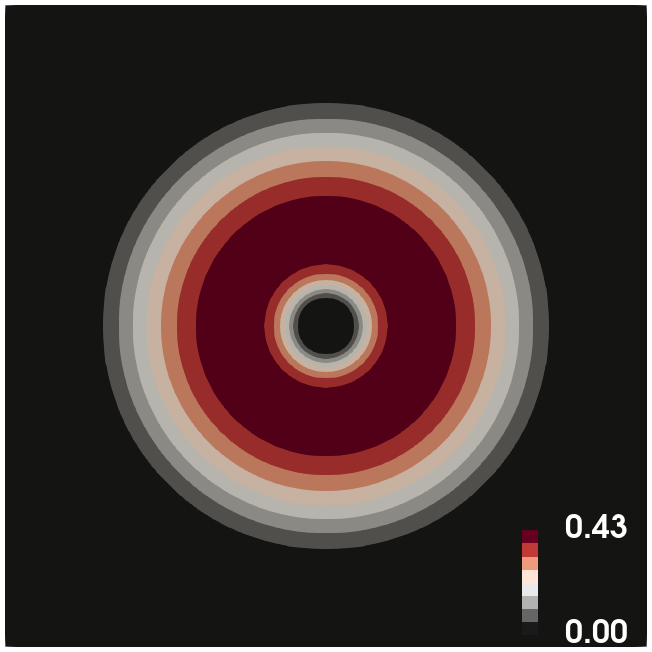}
\includegraphics[width=0.192\textwidth]{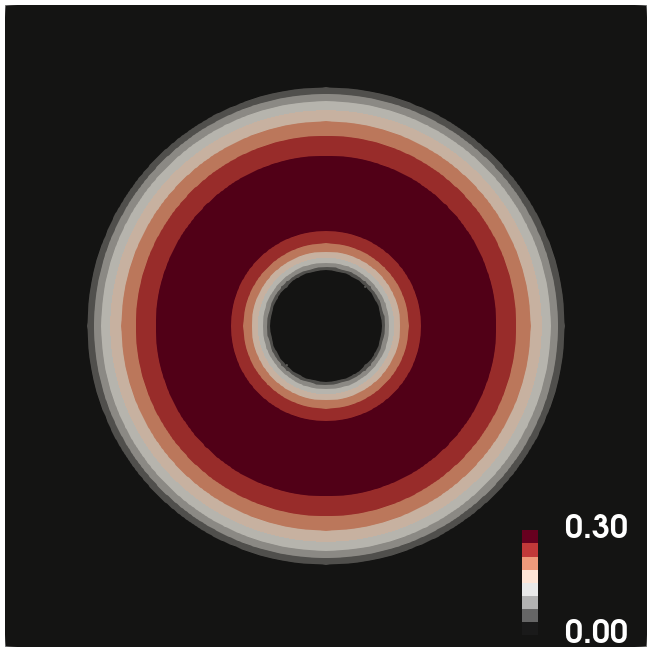}
\includegraphics[width=0.192\textwidth]{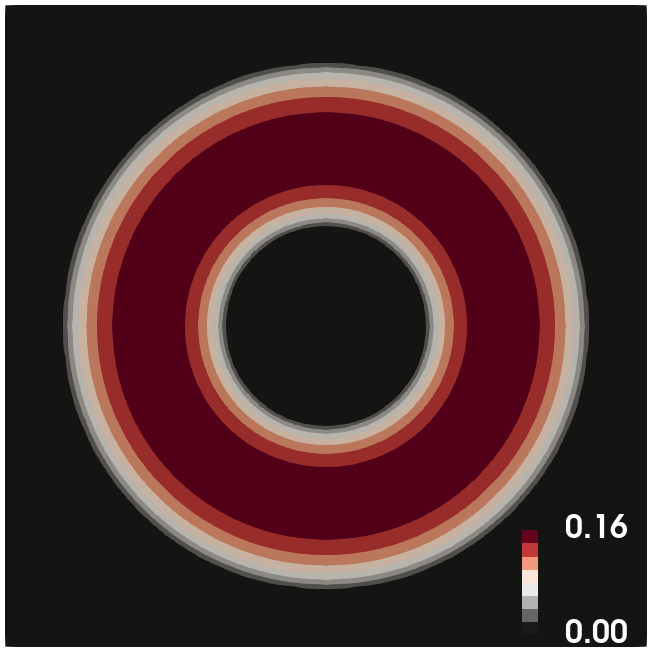}
\includegraphics[width=0.192\textwidth]{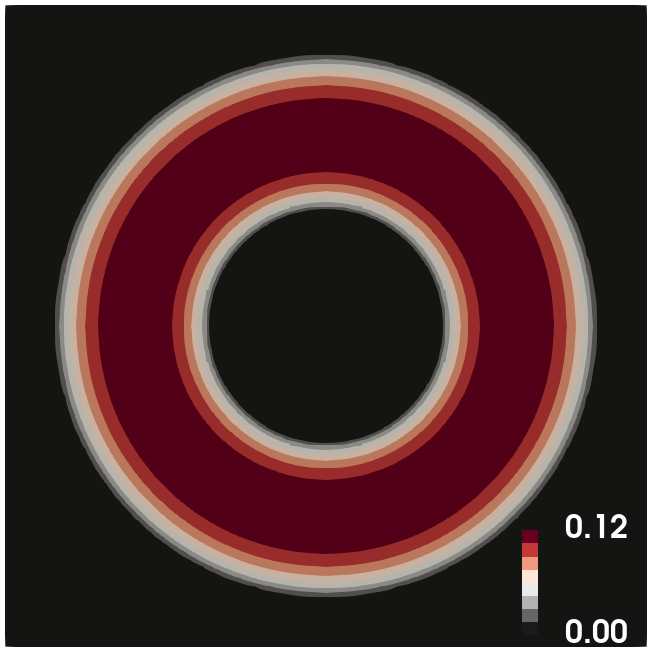}
\includegraphics[width=0.192\textwidth]{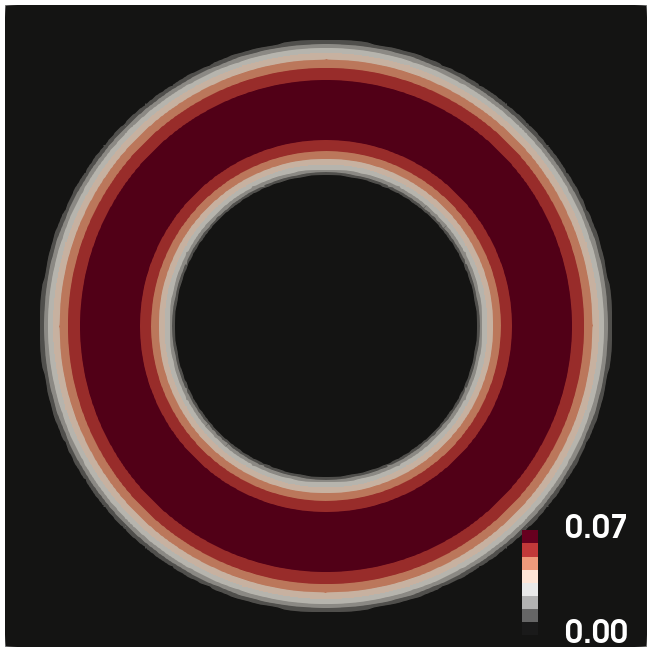}
}
\subfigure[Case 4 energy, Type 2 reaction. Left to right time: $t=0.2, 0.5, 1.5, 2.0, 3.0$]{
\label{fig:22}
\includegraphics[width=0.192\textwidth]{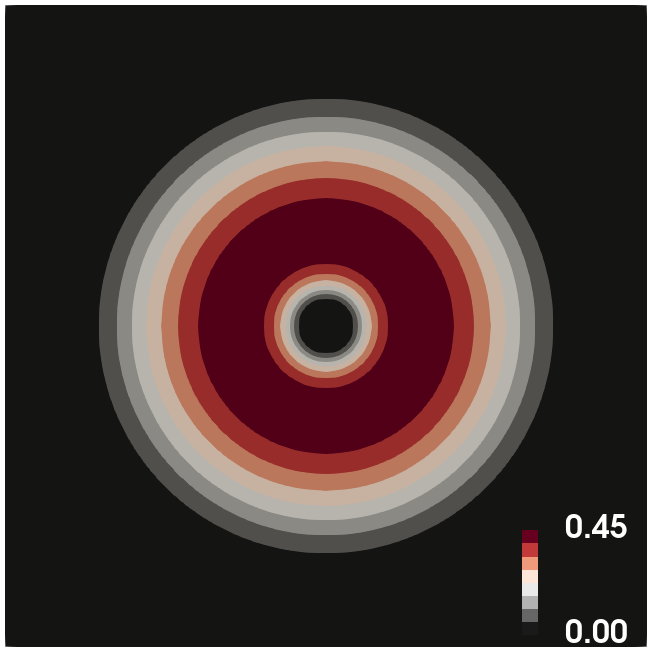}
\includegraphics[width=0.192\textwidth]{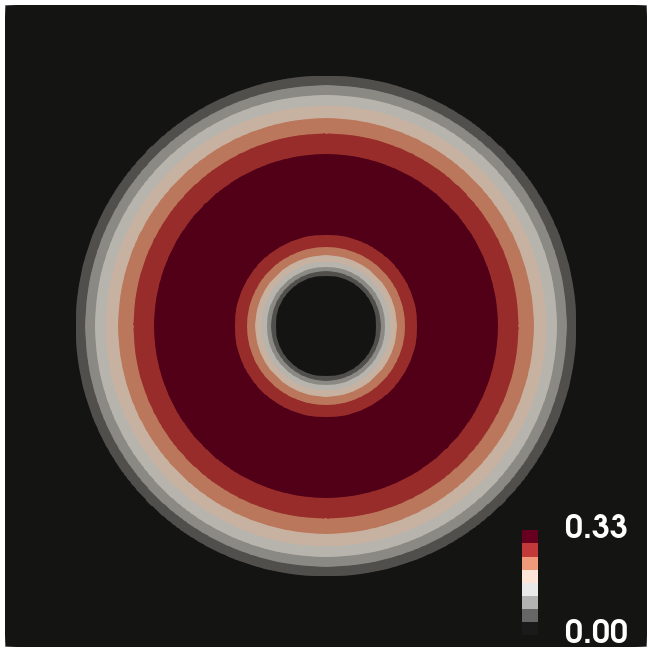}
\includegraphics[width=0.192\textwidth]{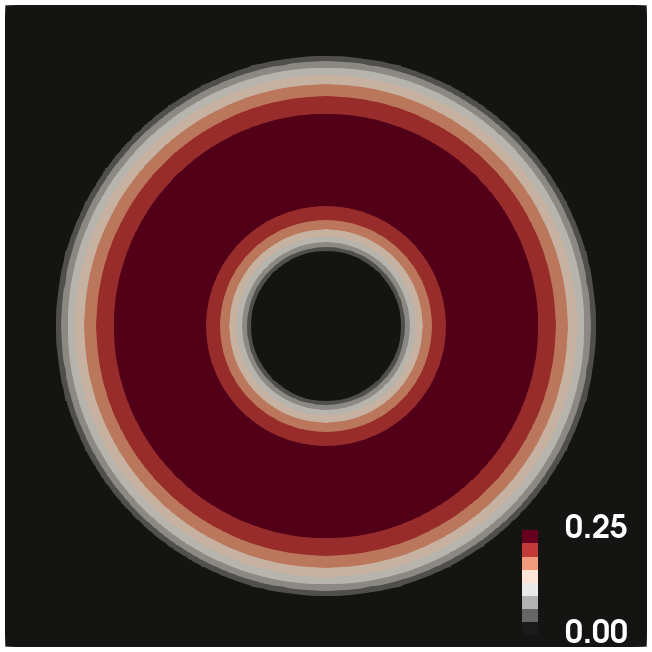}
\includegraphics[width=0.192\textwidth]{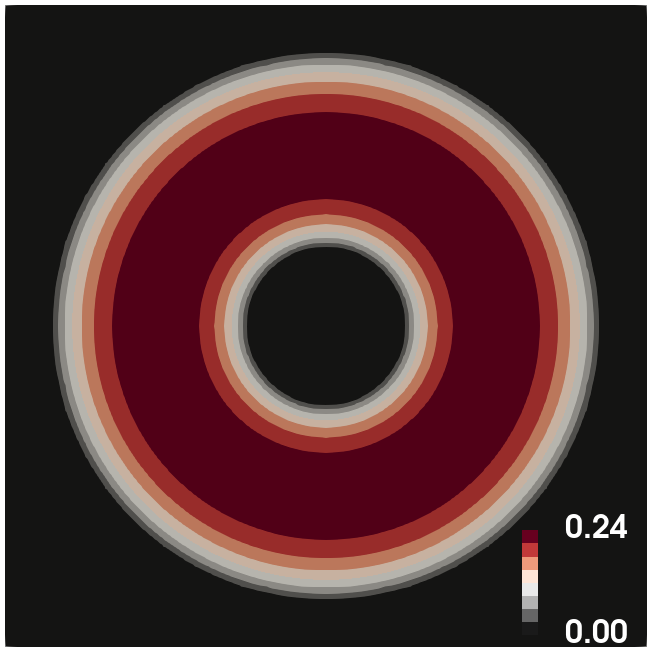}
\includegraphics[width=0.192\textwidth]{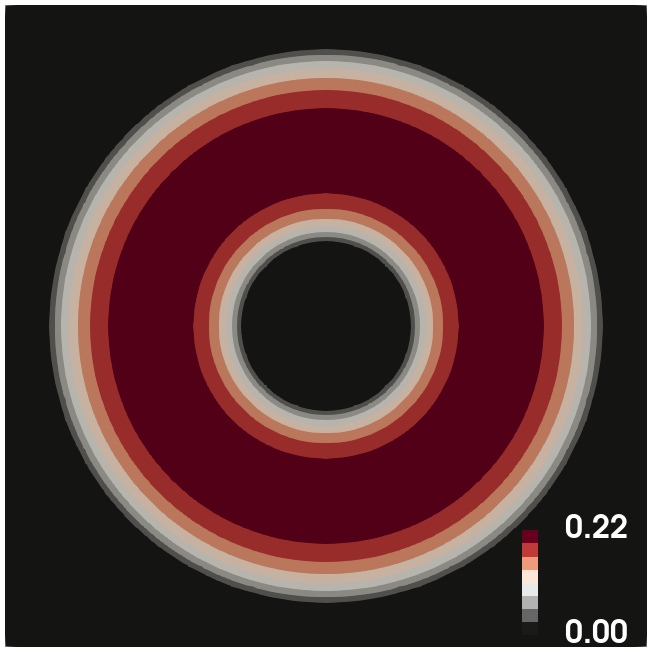}
}
\subfigure[Case 4 energy, Type 3 reaction. Left to right time: $t=0.2, 0.5, 1.5, 2.0, 3.0$]{
\label{fig:23}
\includegraphics[width=0.192\textwidth]{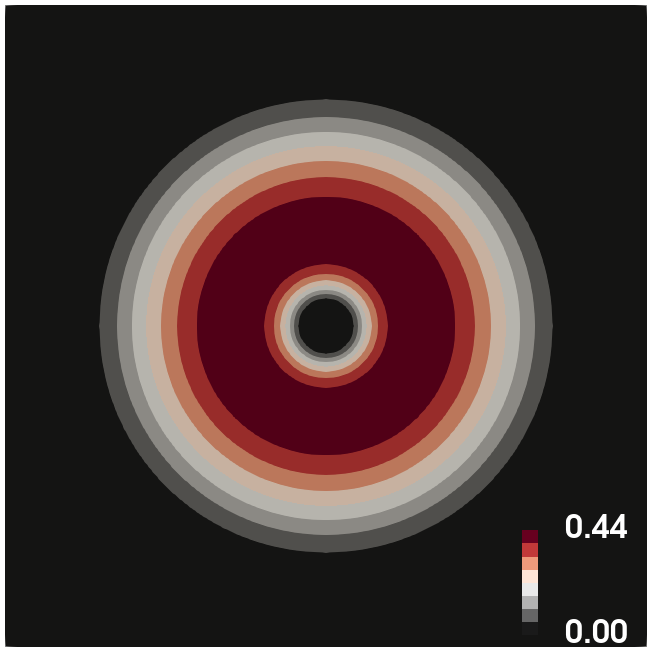}
\includegraphics[width=0.192\textwidth]{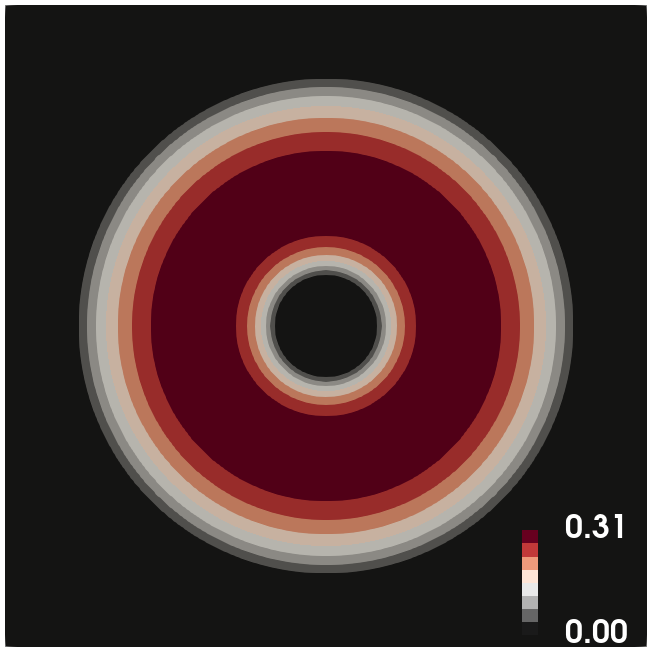}
\includegraphics[width=0.192\textwidth]{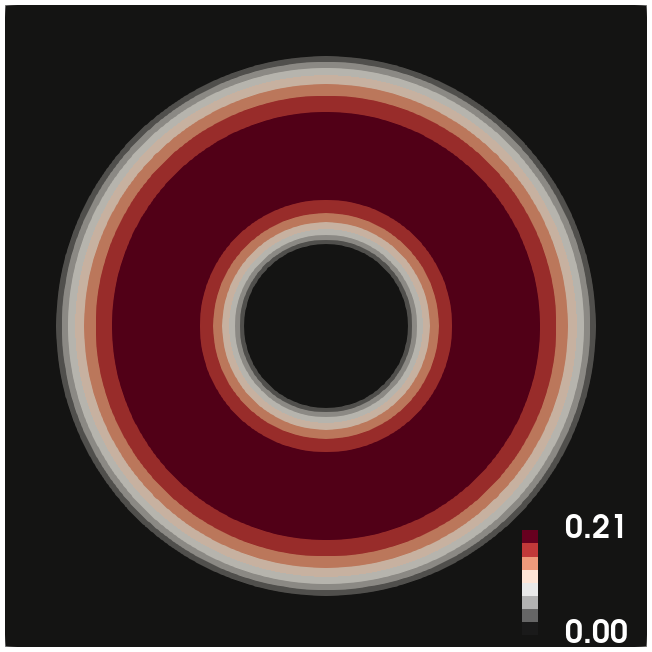}
\includegraphics[width=0.192\textwidth]{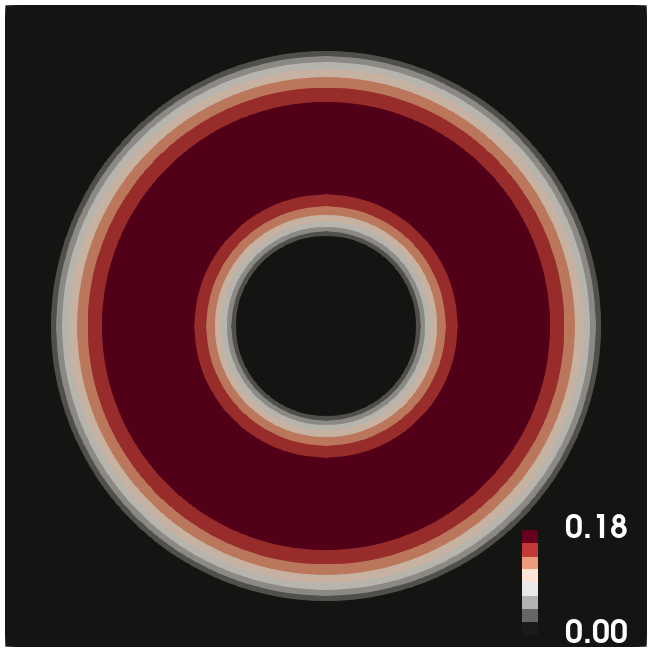}
\includegraphics[width=0.192\textwidth]{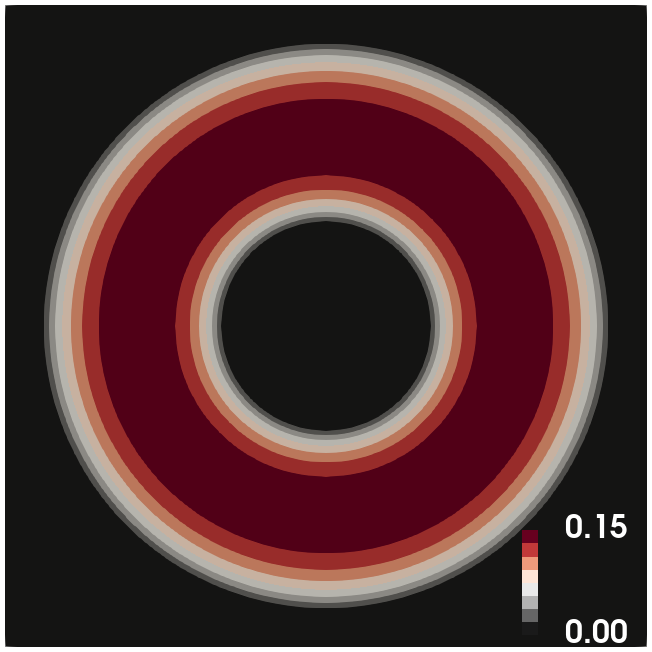}
}
\caption{Example \ref{ex3}. Snapshots of density contours at different times for different reaction mobility functions.}
\label{fig:2}
\end{figure}

\subsection{Fisher-KPP equation}
\label{ex4}
Our next example deals with the Fisher-KPP equation \eqref{kpp}. 
Here we slightly modify the PDE \eqref{kpp} to allow for anisotropic diffusion:
\[
\partial_t\rho-\lambda_1\partial_{x_0x_0}\rho-\lambda_2\partial_{x_1x_1}\rho=\mu \rho(1-\rho).
\]
We use a similar setup as in \cite[Secion 3.1]{kpp2d}, where 
the diffusion parameters are taken to be  
$\lambda_1=0.1, \lambda_2=0.01$, and $\mu>0$ is the
reaction coefficient to be specified.
% We take two choices of the reaction parameter: $\mu=0.1$ or $\mu=0.5$.
Initial condition is a flat top Gaussian:
\begin{align*}
\rho_0(x_0,x_1)
=\begin{cases}
1, &\text{if} \;\;x_0^2+4x_1^2\le 0.25\\[.4ex]
\exp(-10(x_0^2+4x_1^2-0.25)), &\text{otherwise} 
\end{cases}
\end{align*}
The computational domain is a rectangle $\Omega=[-2,2]\times[-1,1]$, which
is discretized with a $32\times 16$ square mesh.
We use polynomial degree $k=4$ for the  scheme \eqref{aug-mfp-h}, in which
the functional $F_h$ in \eqref{Fh} is adjusted as follows to allow for anisotropic diffusion:
\[
F_h(\bmu_h):=
\left(
\frac{|m_{h}^0|^2}{2V_{1,0}(\rho_h)}
+
\frac{|m_{h}^1|^2}{2V_{1,1}(\rho_h)}
% +
% \frac{(\gamma_1^{-1}u_{2,h}^2+\gamma_2^{-1}u_{3,h}^2)}{2}
+\frac{|s_{h}|^2}{2V_2(\rho_h)}, 1\right)_h
+\Delta t\, \mathcal{E}_h(u_{0,h}),
% (\rho, \log(\rho)-1)_h,
\]
where 
$V_{1,0}(\rho):=\lambda_1\rho$,
$V_{1,1}(\rho):=\lambda_2\rho$,
$V_2(\rho):=\mu \frac{\rho(\rho-1)}{\log(\rho)}$,
and the energy satisfies 
\[
\mathcal{E}_h(\rho)=
\left(\rho (\log(\rho)-1),1\right)_h.
\]
We take time step size  $\Delta t=0.1$, and the final time is $T=4$.

Snapshots of the density contours for $\mu=0.1$ (weak reaction) 
$\mu=0.5$ (medium reaction),
and $\mu=1.0$ (strong reaction) at different times are shown in Figure \ref{fig:3}.
We further plot the evolution of energy 
$\mathcal{E}_h(\rho_h)$ and total mass 
$\int_{\Omega}\rho_{h}\, dx$ over time 
for the three cases in Figure~\ref{fig:4}.
It is clear that the energy is monotonically decreasing for all three cases and the total mass is monotonically increasing, where a faster decay of energy  is observed when the reaction coefficient $\mu$ is larger.

% We observe the expected energy dissipation, with .
% Moreover, it is interesting to see that the total mass is increasing over time and the rate of increase is higher for the stronger reaction case.
\begin{figure}[tb]
\centering
\subfigure[Reaction coefficient $\mu=0.1$. Left to right time: $t=1, 2, 3, 4$]{
\label{fig:31}
\includegraphics[width=0.24\textwidth]{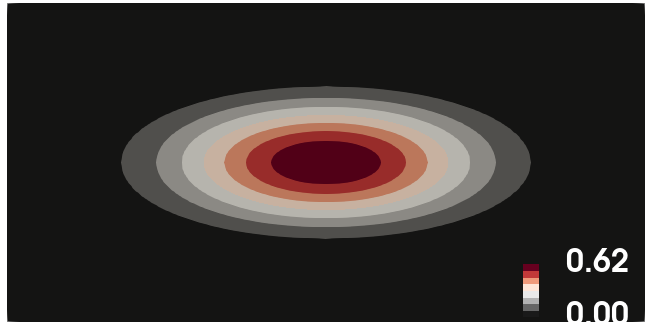}
\includegraphics[width=0.24\textwidth]{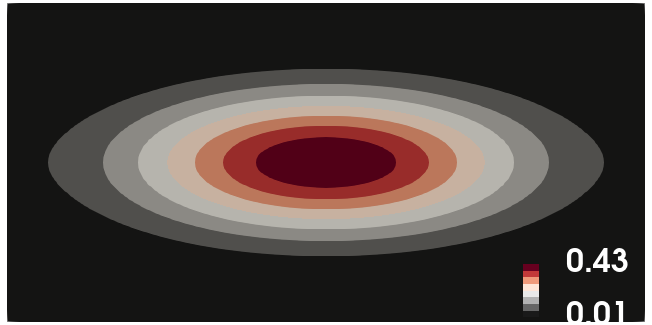}
\includegraphics[width=0.24\textwidth]{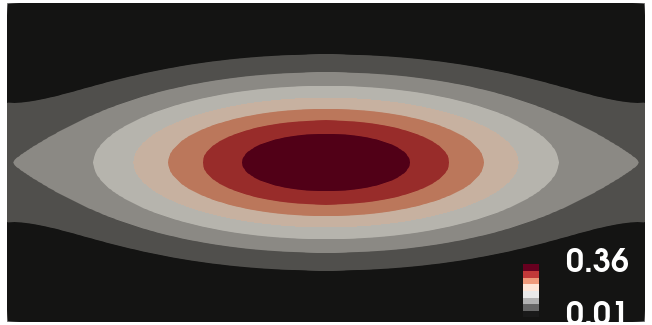}
\includegraphics[width=0.24\textwidth]{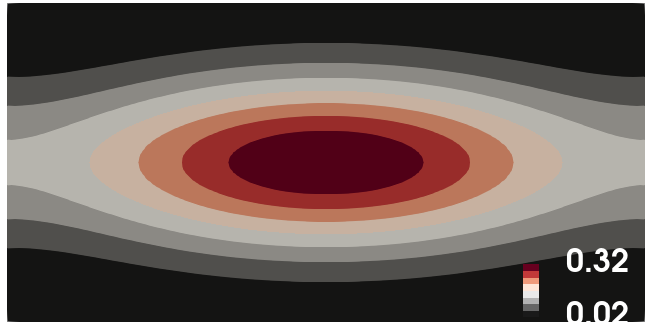}
}
\subfigure[Reaction coefficient $\mu=0.5$. Left to right time: $t=1, 2, 3, 4$]{
\label{fig:32}
\includegraphics[width=0.24\textwidth]{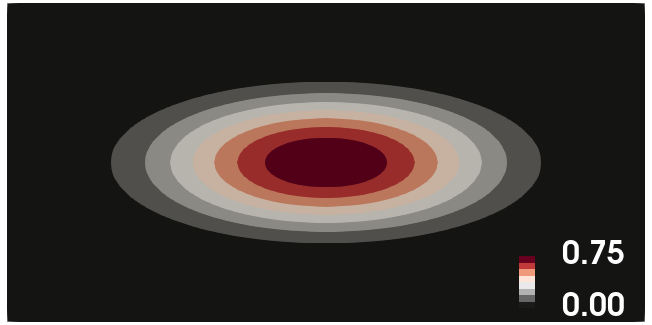}
\includegraphics[width=0.24\textwidth]{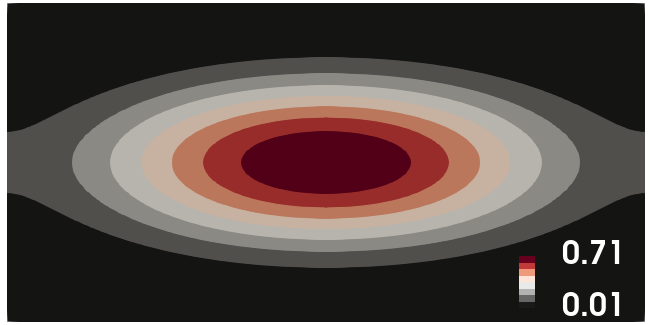}
\includegraphics[width=0.24\textwidth]{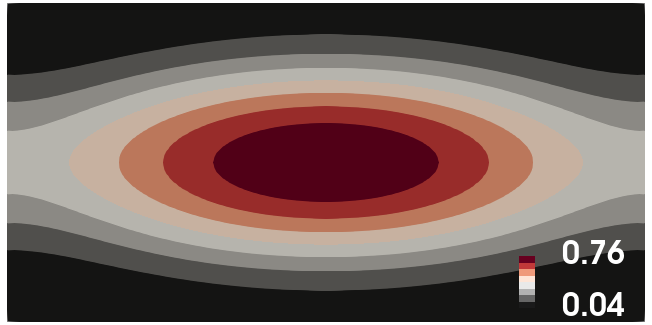}
\includegraphics[width=0.24\textwidth]{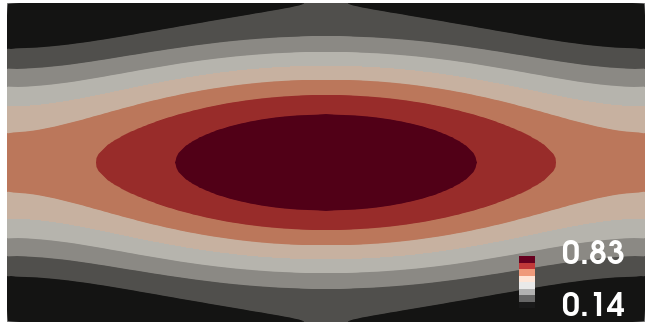}
}
\subfigure[Reaction coefficient $\mu=1.0$. Left to right time: $t=1, 2, 3, 4$]{
\label{fig:33}
\includegraphics[width=0.24\textwidth]{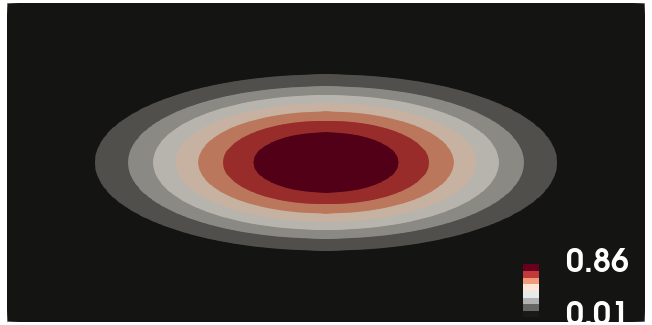}
\includegraphics[width=0.24\textwidth]{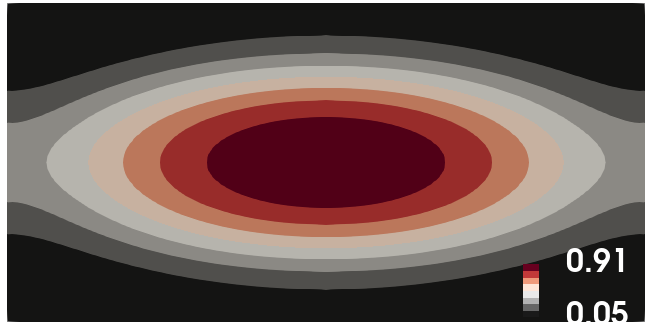}
\includegraphics[width=0.24\textwidth]{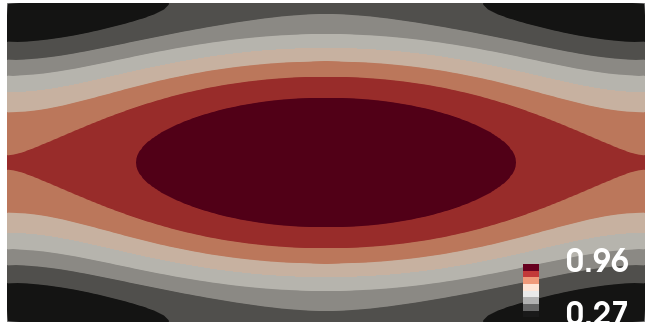}
\includegraphics[width=0.24\textwidth]{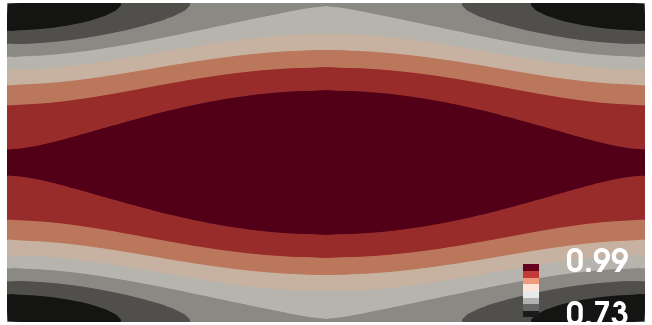}
}
\caption{Example \ref{ex3}. Snapshots of density contours at different times for different reaction coefficients.}
\label{fig:3}
\end{figure}

\begin{figure}[tb]
\centering
\includegraphics[width=0.45\textwidth]{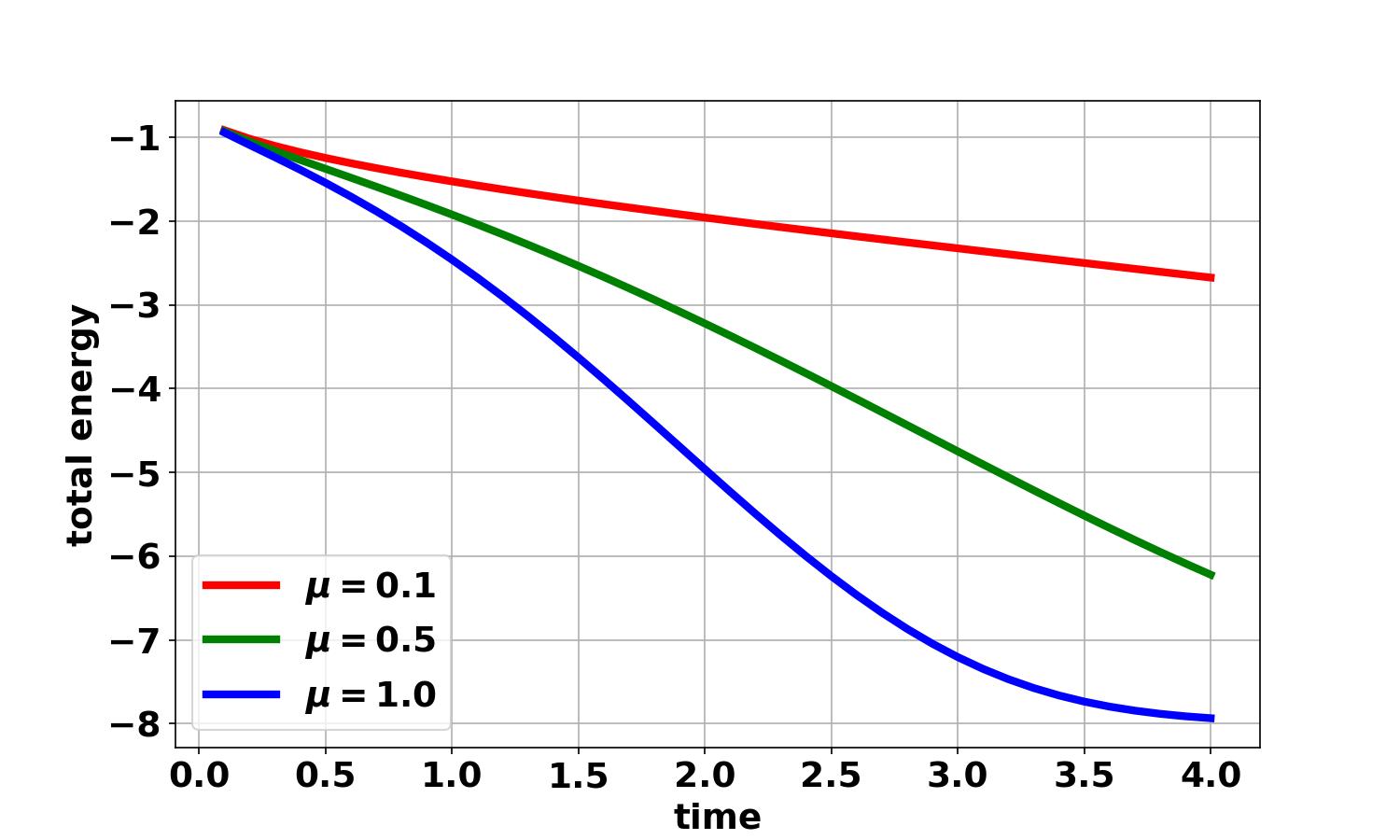}
\includegraphics[width=0.45\textwidth]{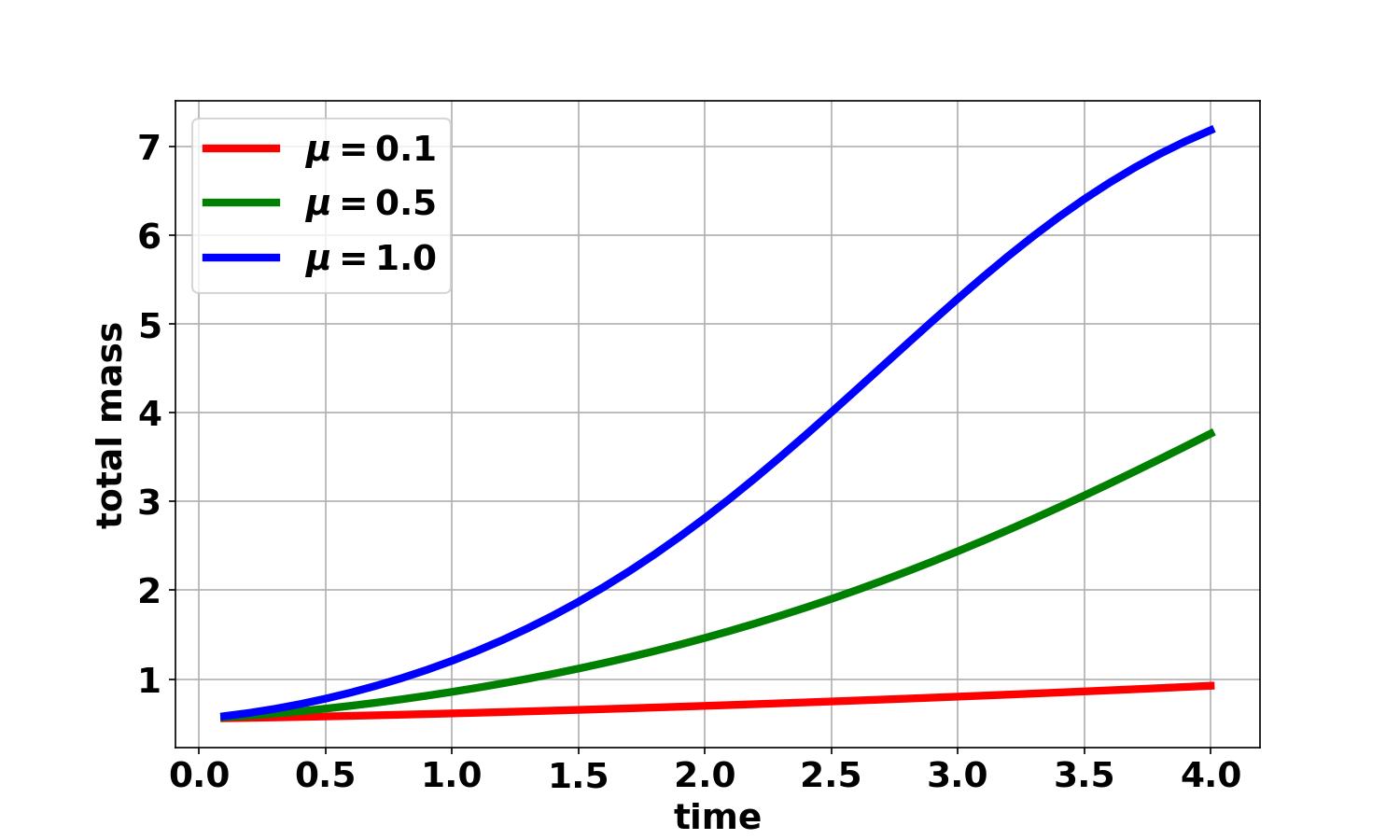}
\caption{Example \ref{ex3}. Evolution of total 
energy (left) and total mass (right) over time.}
\label{fig:4}
\end{figure}

\subsection{Two-component reversible reaction-diffusion system with detailed balance}
\label{ex5}
We consider the two-species model discussed in Section \ref{sec:25}.
In particular, we consider the system \eqref{2comp} with parameters
$k_+=1$ and $k_-=0.1$, $\gamma_1= 0.2$, $\gamma_2=0.1$, and 
$V_{1,1}(\rho)=\gamma_1\rho^m$ and $V_{1,2}(\rho)=\gamma_2\rho$
with four choices of $m\in\{1,2,3,4\}$.
Here porous medium type diffusion is used for the first species with density $\rho_1$ and 
linear diffusion is used for the second species with 
density $\rho_2$. Similar model was used in \cite{LiuWangWang21a,LiuWangWang22}.
The problems are solved on the domain $\Omega=[-1,1]\times[-1,1]$ with the following initial data 
\begin{align*}
\rho_1(x, 0)=&\; \frac{1}{2}\left(
1-\mathrm{tanh}(10(\sqrt{x_0^2+x_1^2}-0.2))
\right),\\
\rho_2(x, 0)= &\;\frac{1}{2}\left(
1+\mathrm{tanh}(10(\sqrt{x_0^2+x_1^2}-0.2)).
\right).
\end{align*}
Final time is taken to be $T=2$.
% The total energy 
% \[
% E_{\mathrm{total}}:= \mathcal{E}_1(\rho_1)+\mathcal{E}_2(\rho_2)
% \]
% is decreasing according to \eqref{rds2-ener}, where the two energies are given in \eqref{rds2-ee}.

We use the scheme \eqref{aug-mfp-hs} 
with polynomial degree $k=4$ on a $16\times 16$ mesh with time step size $\Delta t=0.05$.
We apply Algorithm \ref{alg:3} to solve the resulting saddle point problem.
Snapshots of the density contours at different times are shown in Figure \ref{fig:5a} for the first component, and in Figure \ref{fig:5b} for the second component.
It is clear that increasing the power $m$ leads to a slower diffusion for the first species. 
\begin{figure}[tb]
\centering
\subfigure[$V_{1,1}(\rho)=\gamma_1\rho$. Left to right time: $t=0,0.5,1,1.5,2$]{
\label{fig:51}
\includegraphics[width=0.192\textwidth]{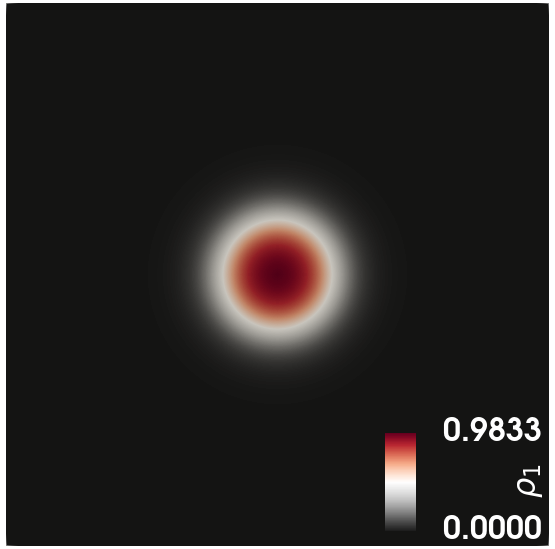}
\includegraphics[width=0.192\textwidth]{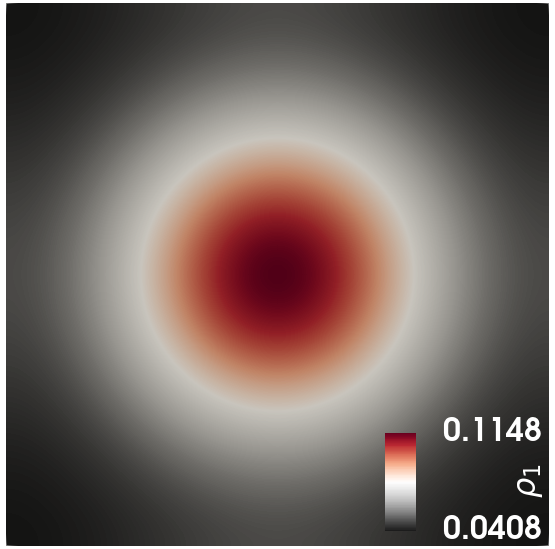}
\includegraphics[width=0.192\textwidth]{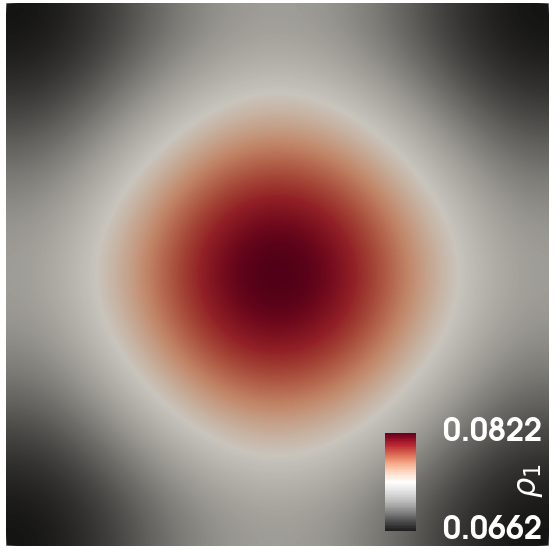}
\includegraphics[width=0.192\textwidth]{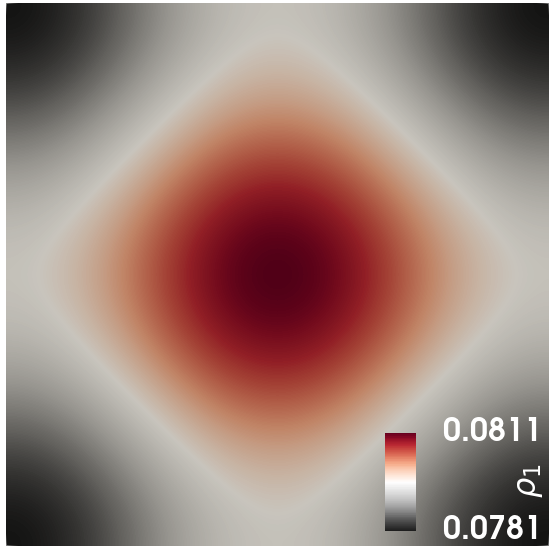}
\includegraphics[width=0.192\textwidth]{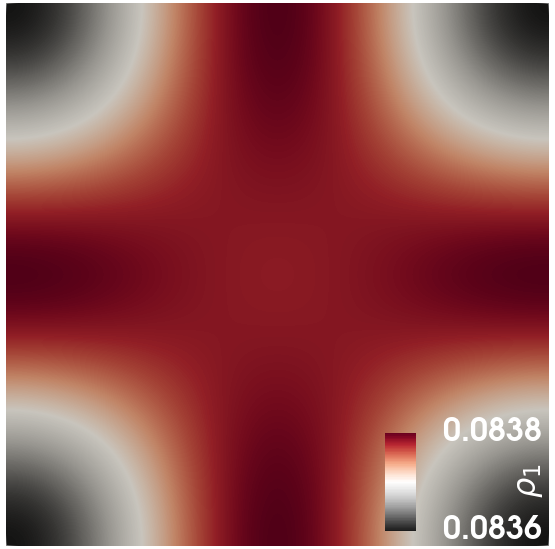}
}
\subfigure[$V_{1,1}(\rho)=\gamma_1\rho^2$. Left to right time: $t=0,0.5,1,1.5,2$]{
\label{fig:52}
\includegraphics[width=0.192\textwidth]{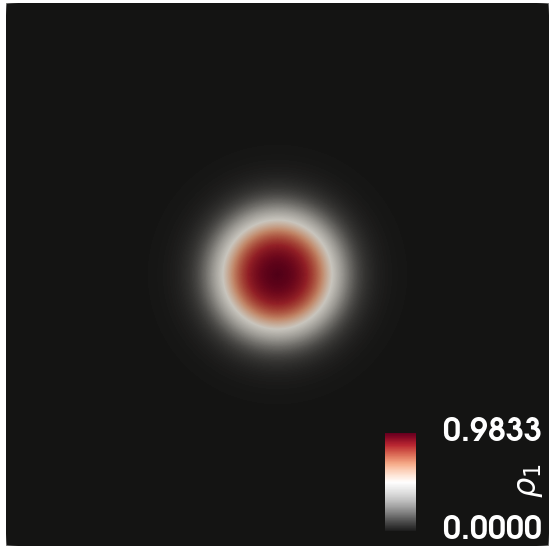}
\includegraphics[width=0.192\textwidth]{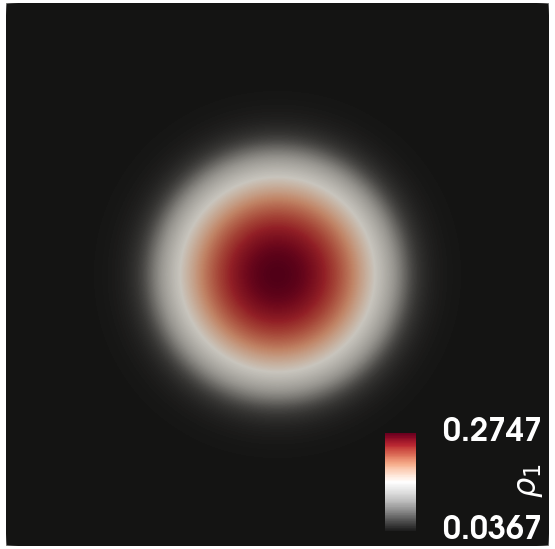}
\includegraphics[width=0.192\textwidth]{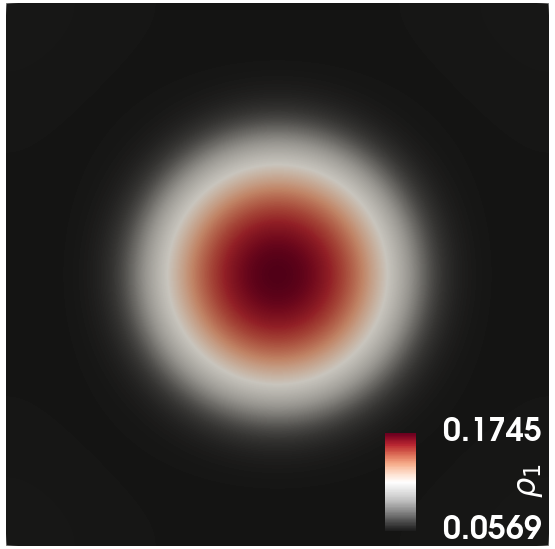}
\includegraphics[width=0.192\textwidth]{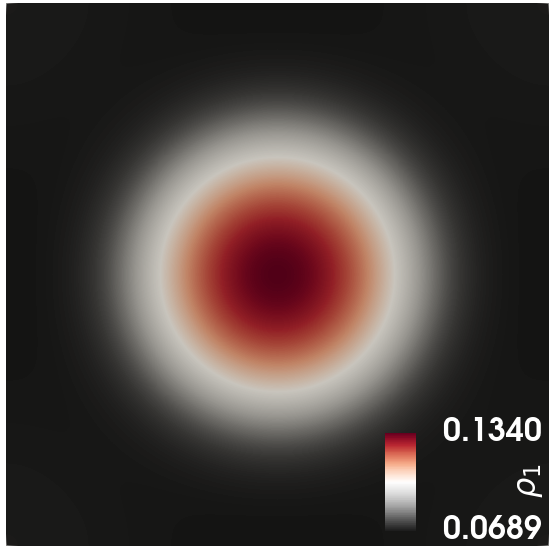}
\includegraphics[width=0.192\textwidth]{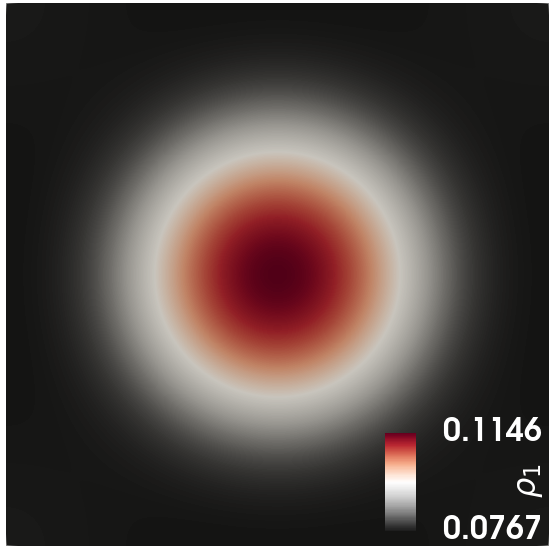}
}
\subfigure[$V_{1,1}(\rho)=\gamma_1\rho^3$. Left to right time: $t=0,0.5,1,1.5,2$]{
\label{fig:53}
\includegraphics[width=0.192\textwidth]{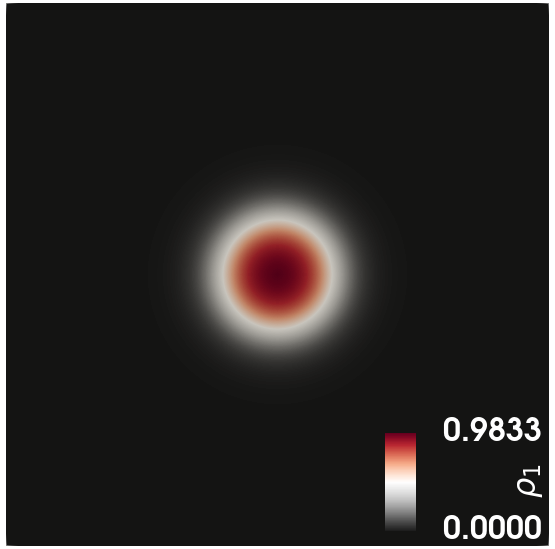}
\includegraphics[width=0.192\textwidth]{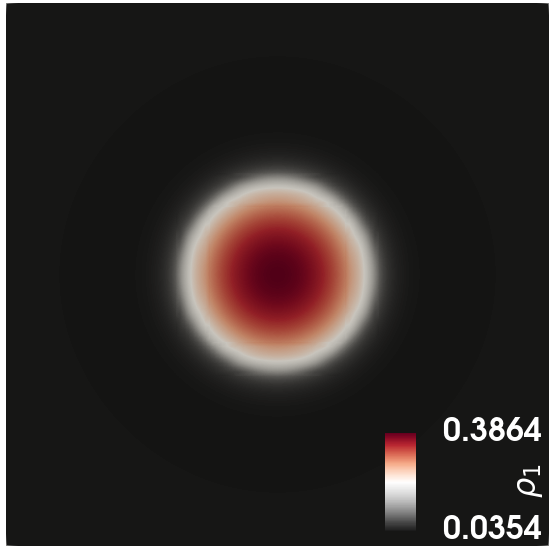}
\includegraphics[width=0.192\textwidth]{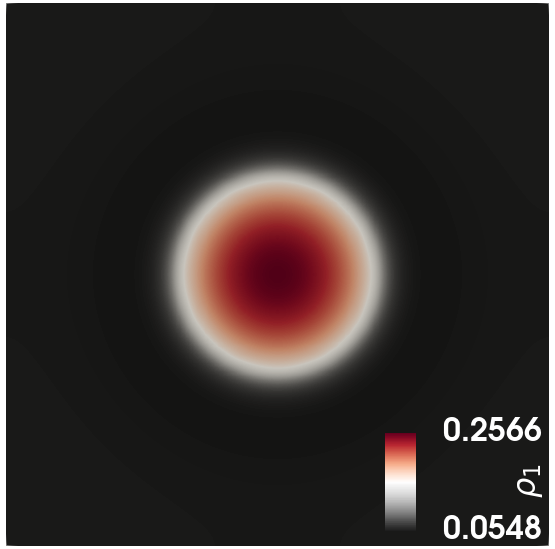}
\includegraphics[width=0.192\textwidth]{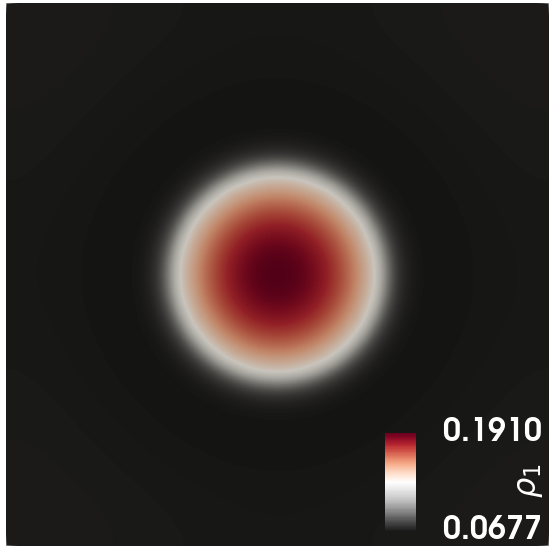}
\includegraphics[width=0.192\textwidth]{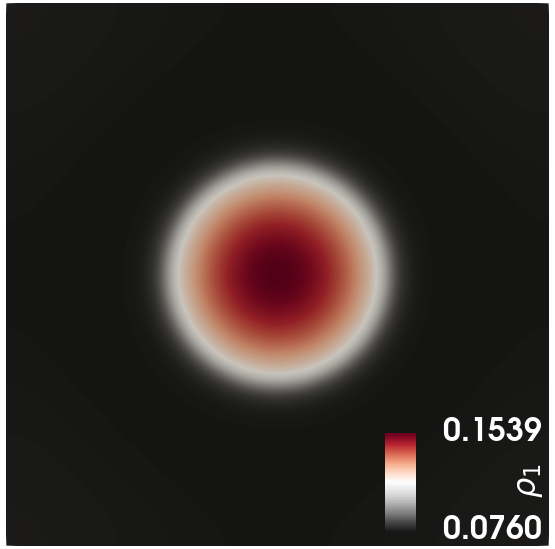}
}
\subfigure[$V_{1,1}(\rho)=\gamma_1\rho^4$. Left to right time: $t=0,0.5,1,1.5,2$]{
\label{fig:54}
\includegraphics[width=0.192\textwidth]{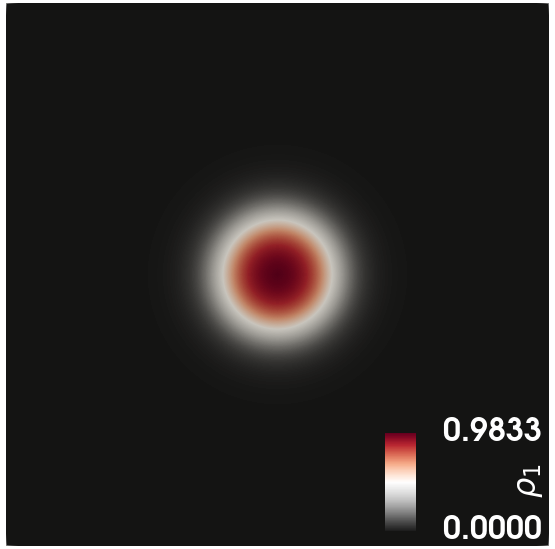}
\includegraphics[width=0.192\textwidth]{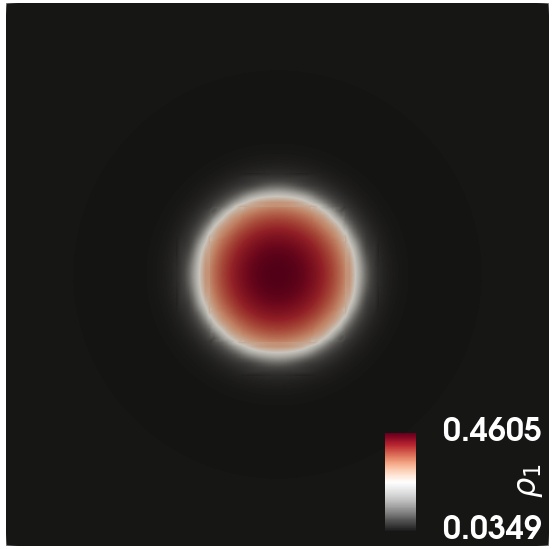}
\includegraphics[width=0.192\textwidth]{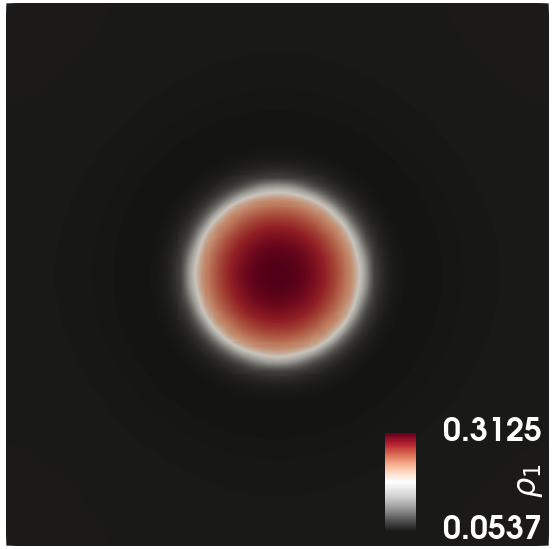}
\includegraphics[width=0.192\textwidth]{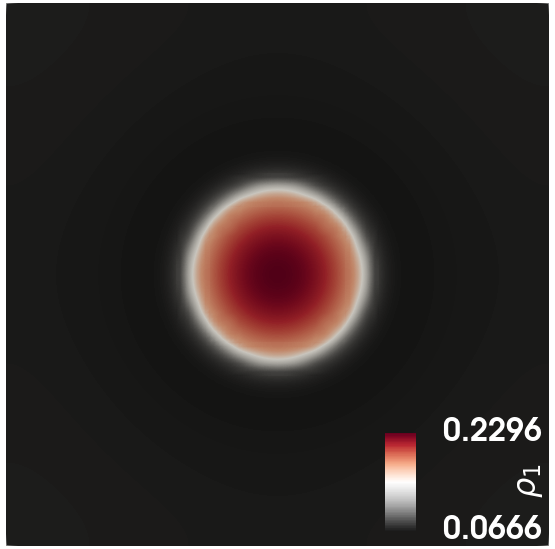}
\includegraphics[width=0.192\textwidth]{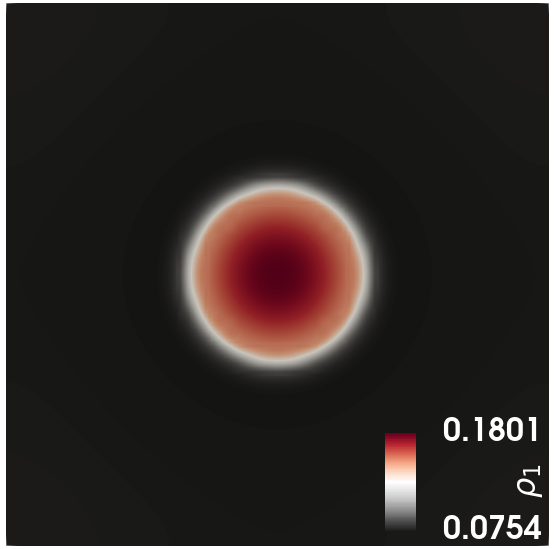}
}
\caption{Example \ref{ex5}. Snapshots of first-component density contours at different times for different $V_{1,1}(\rho)$.}
\label{fig:5a}
\end{figure}

\begin{figure}[tb]
\centering
\subfigure[$V_{1,1}(\rho)=\rho$. Left to right time: $t=0,0.5,1,1.5,2$]{
\label{fig:51b}
\includegraphics[width=0.192\textwidth]{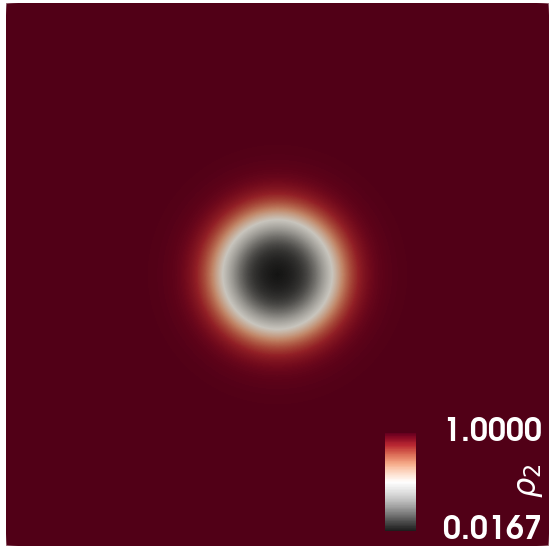}
\includegraphics[width=0.192\textwidth]{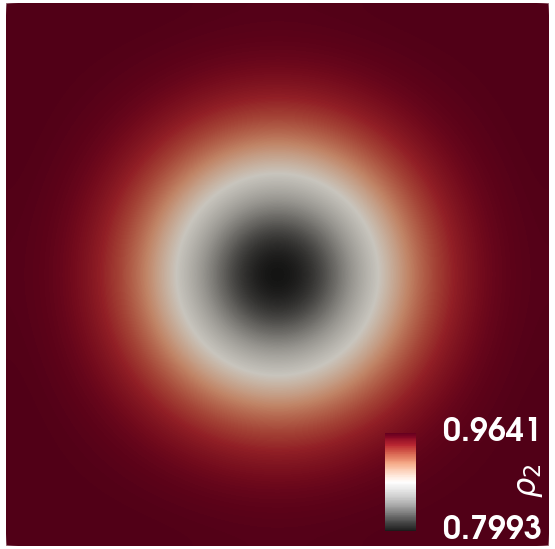}
\includegraphics[width=0.192\textwidth]{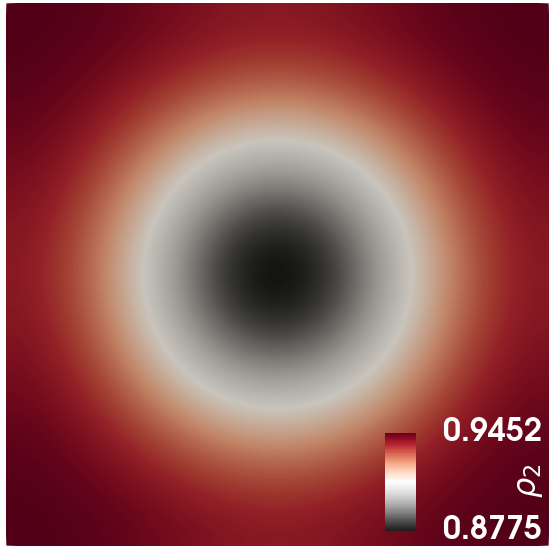}
\includegraphics[width=0.192\textwidth]{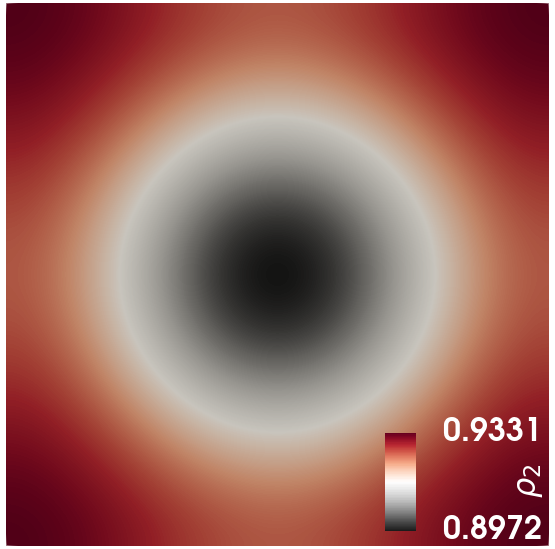}
\includegraphics[width=0.192\textwidth]{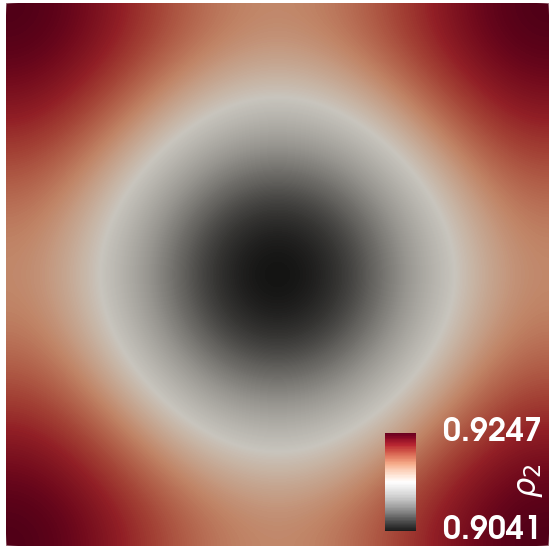}
}
\subfigure[$V_{1,1}(\rho)=\rho^2$. Left to right time: $t=0,0.5,1,1.5,2$]{
\label{fig:52b}
\includegraphics[width=0.192\textwidth]{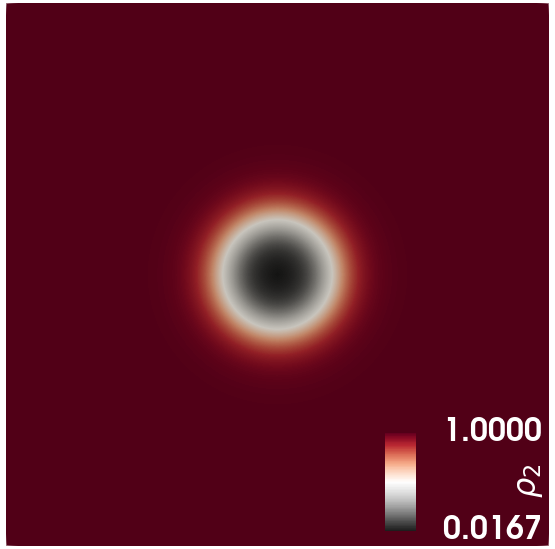}
\includegraphics[width=0.192\textwidth]{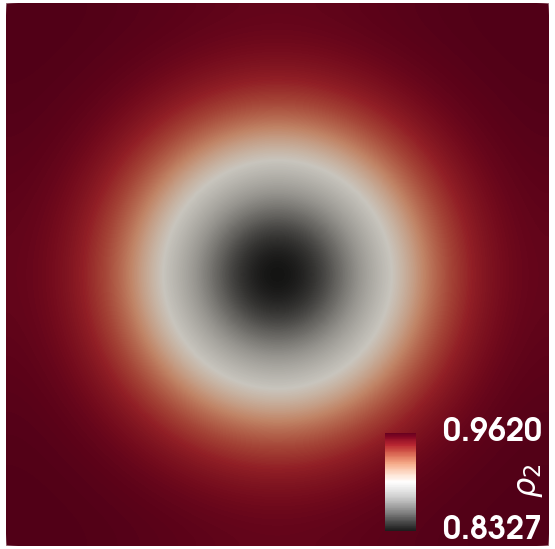}
\includegraphics[width=0.192\textwidth]{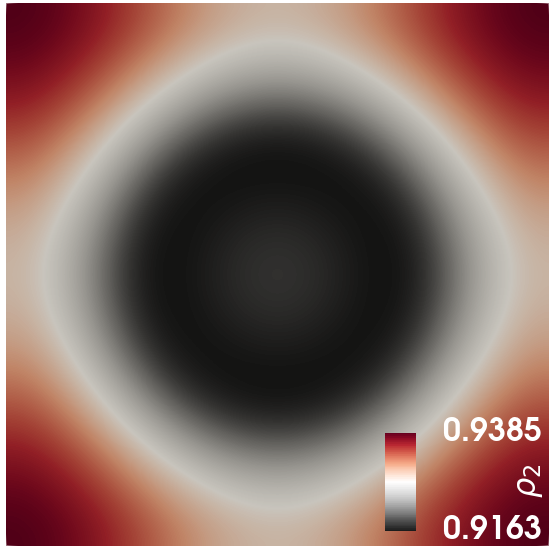}
\includegraphics[width=0.192\textwidth]{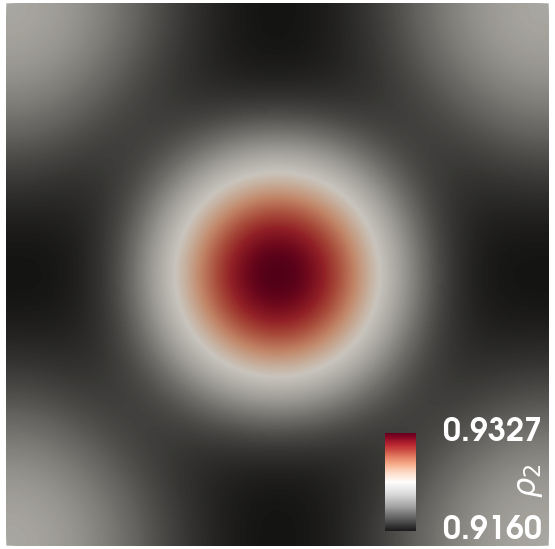}
\includegraphics[width=0.192\textwidth]{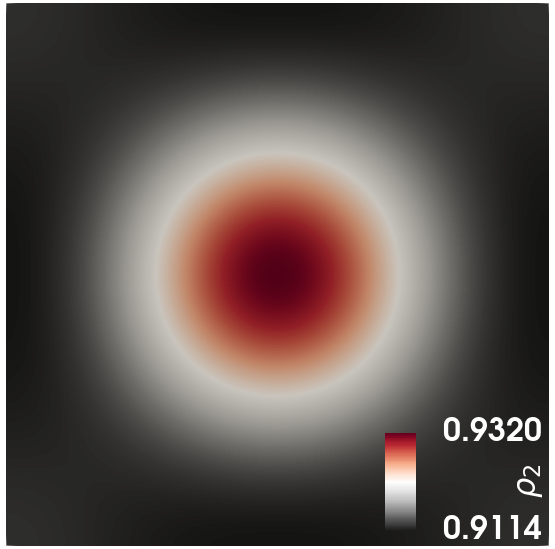}
}
\subfigure[$V_{1,1}(\rho)=\rho^3$. Left to right time: $t=0,0.5,1,1.5,2$]{
\label{fig:53b}
\includegraphics[width=0.192\textwidth]{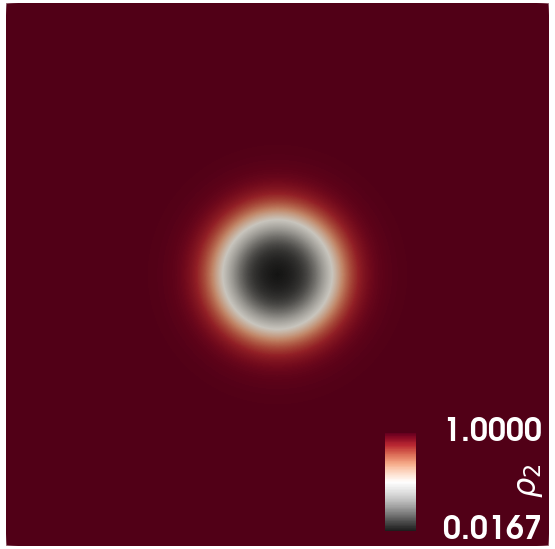}
\includegraphics[width=0.192\textwidth]{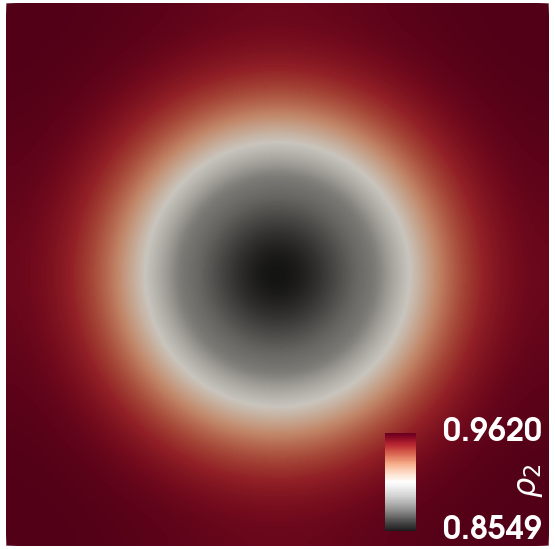}
\includegraphics[width=0.192\textwidth]{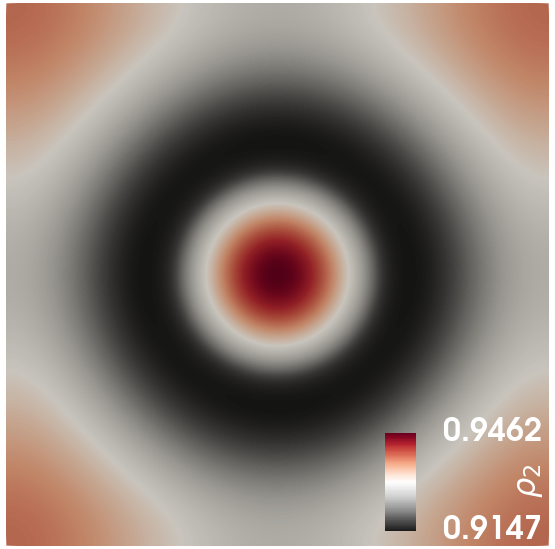}
\includegraphics[width=0.192\textwidth]{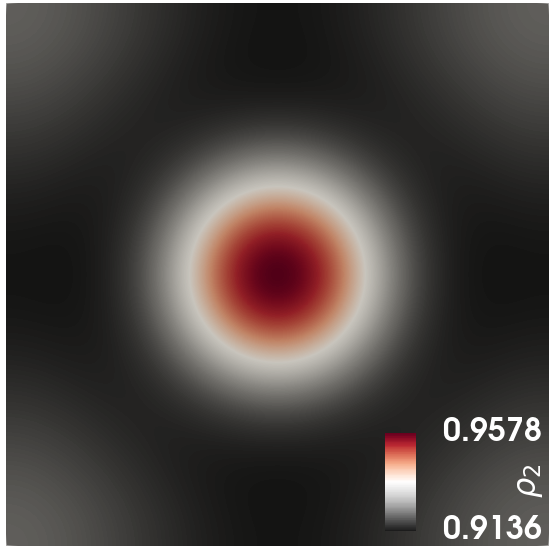}
\includegraphics[width=0.192\textwidth]{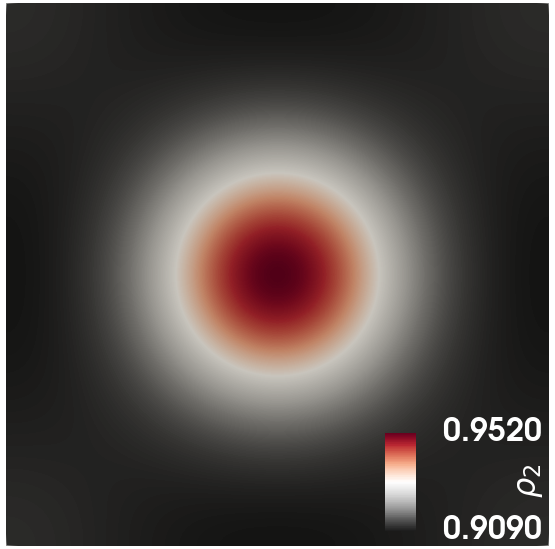}
}
\subfigure[$V_{1,1}(\rho)=\rho^4$. Left to right time: $t=0,0.5,1,1.5,2$]{
\label{fig:54b}
\includegraphics[width=0.192\textwidth]{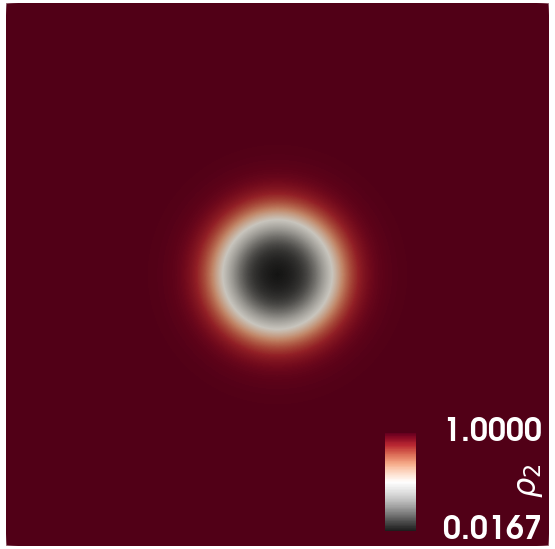}
\includegraphics[width=0.192\textwidth]{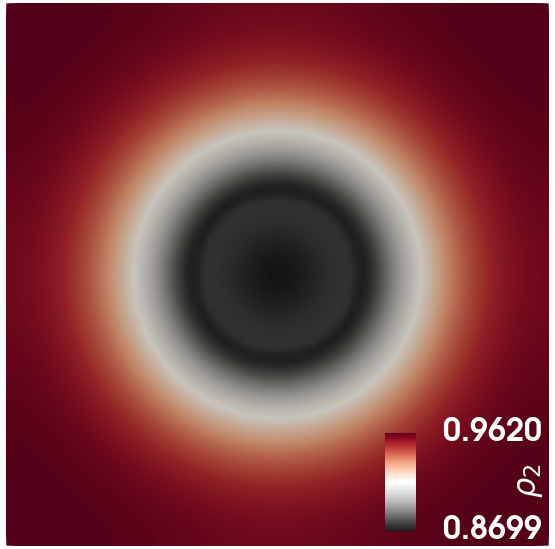}
\includegraphics[width=0.192\textwidth]{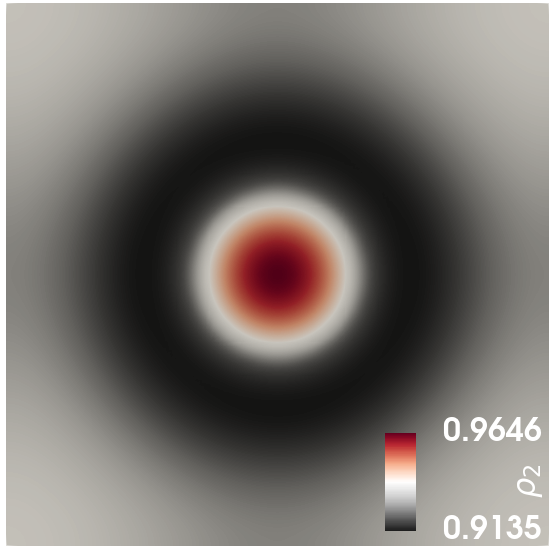}
\includegraphics[width=0.192\textwidth]{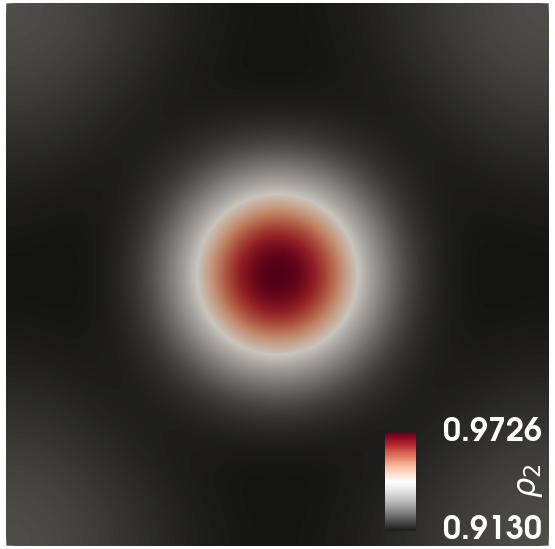}
\includegraphics[width=0.192\textwidth]{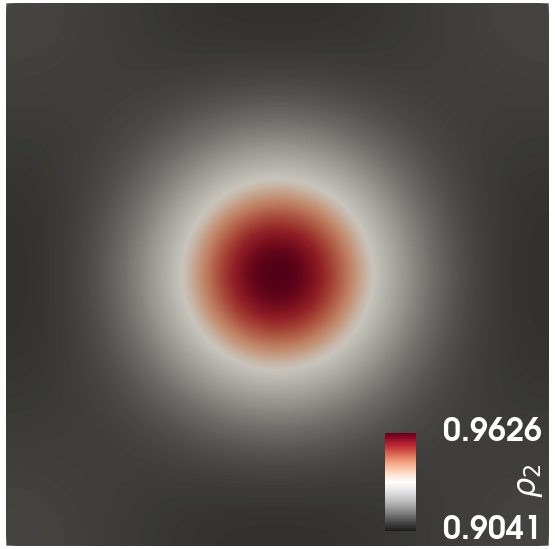}
}
\caption{Example \ref{ex5}. Snapshots of second-component density contours at different times for different $V_{1,1}(\rho)$.}
\label{fig:5b}
\end{figure}

We further plot the time evolution of the total energy 
$\mathcal{E}_{\mathrm{total}}=\mathcal{E}_{1,h}(\rho_{1,h})+
\mathcal{E}_{2,h}(\rho_{2,h})$ and total mass 
$\int_{\Omega}(\rho_{1,h}+\rho_{2,h})\, dx$ 
for the four cases in Figure~\ref{fig:5c}. 
Moreover, the total mass conservation is kept well within an error of $10^{-4}$ for all cases.

\begin{figure}[tb]
\centering
\includegraphics[width=0.48\textwidth]{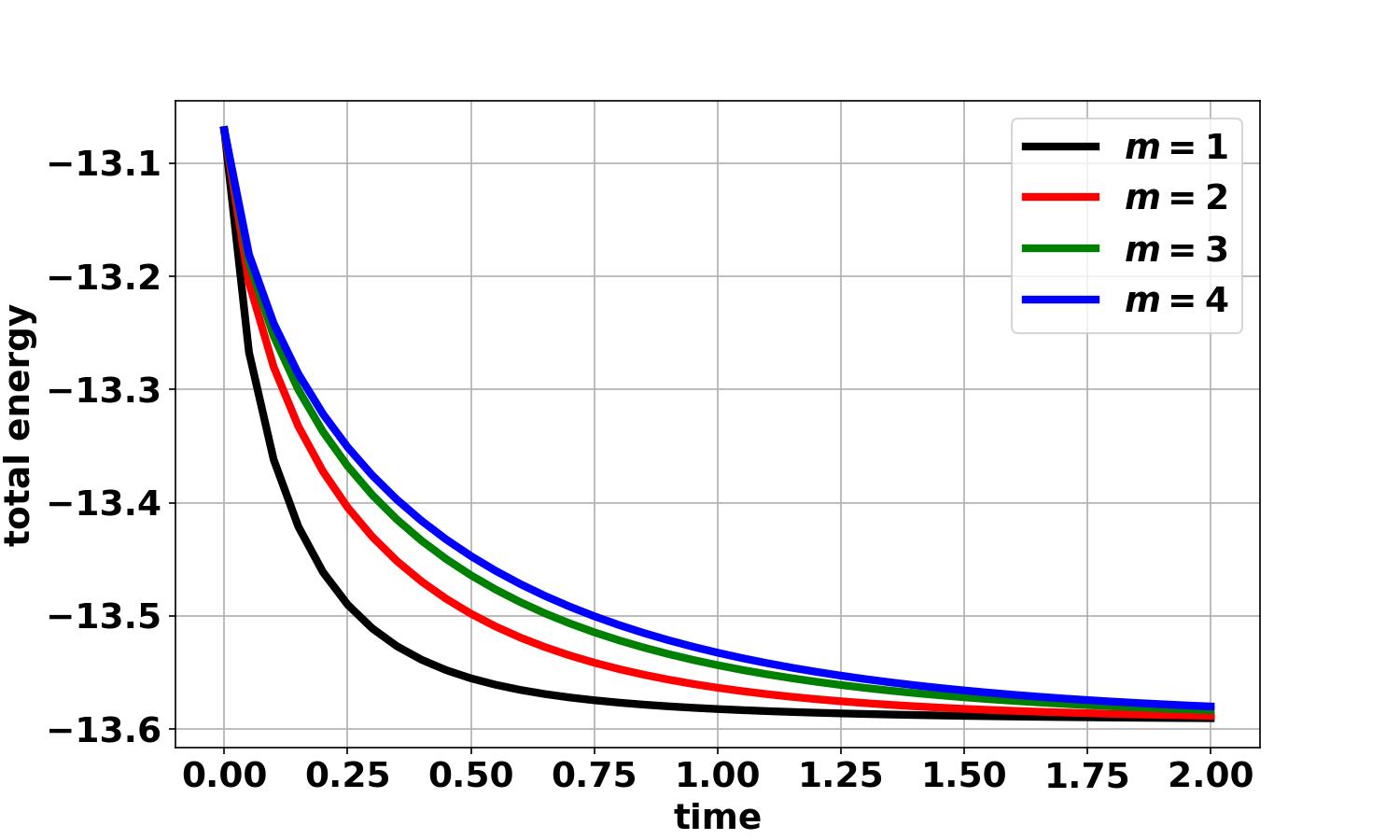}
\includegraphics[width=0.48\textwidth]{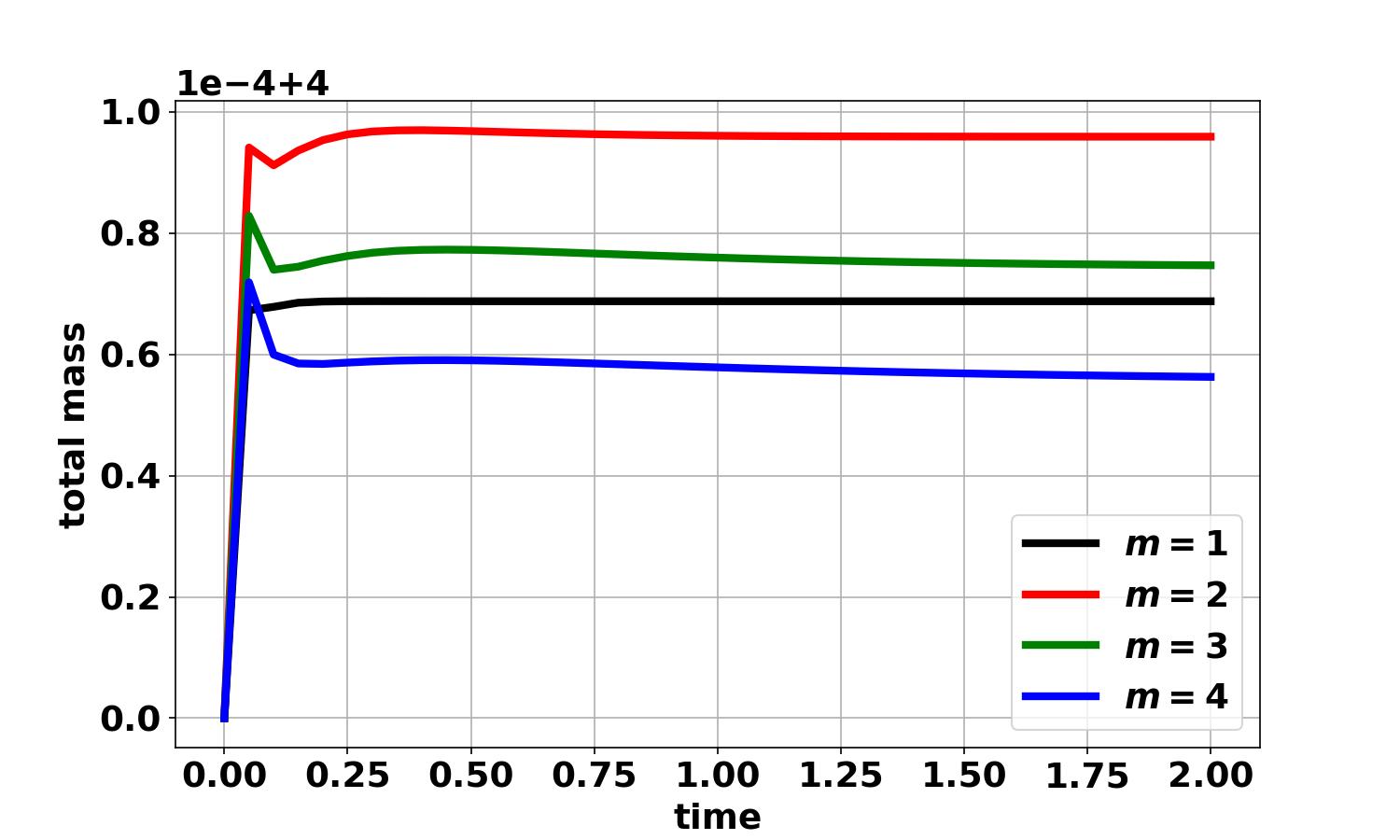}
\caption{Example \ref{ex5}. Evolution of 
total energy (left) and total mass (right) over time
with $V_{1,1}(\rho)=\gamma_1\rho^m$.}
\label{fig:5c}
\end{figure}

\subsection{Reversible Gray-Scott model}
\label{ex6}
In our last example, we simulate the 4-component 
reversible Gray-Scott model \eqref{rGS} using the Algorithm \ref{alg:3} for the fully discrete scheme 
\eqref{aug-mfp-hs} with variables/operators \eqref{rGS-full}.
The physical parameters are chosen to be the following:
\begin{align*}
\gamma_1 = 1, &\quad\gamma_2=0.01, \\
k_+^1= 1, &\quad k_-^1=10^{-3},\\ 
k_+^2= 8.4\times10^{-2},&\quad k_-^2=8.4\times10^{-5}, \\
k_+^3= 2.4\times10^{-2},&\quad k_-^3=2.4\times10^{-5}, 
\end{align*}
where the backward reaction rates are taken to be 1000 times smaller than the forward reaction rates. 
This provides a good approximation to the irreversible Gray-Scott model \eqref{irrGS}.
We consider both 1D and 2D simulations for this problem.
The initial conditions 
for the second component density $\rho_2$
is taken to be 
\begin{align*}
% \rho_1(x, 0) = &\; 1-2\rho_2(x, 0),\\
\rho_2(x, 0) = &\; 
\begin{cases}
0.15+\frac{1}{4}x^2(x+1)^2 &\quad \text{if } -1\le x\le 0,\\[.3ex]
0.15+\frac{1}{4}x^2(1-x)^2 &\quad \text{if } 0\le x\le 1,\\[.3ex]
0.15 & \quad \text{elsewhere},\\[.3ex]
\end{cases}
% \\
% \rho_3(x, 0) = &\; 1, \quad\rho_4(x, 0) = \; \frac{k^3_+}{k^3_-}=1000,\\
\end{align*}
in one dimension, and 
\begin{align*}
% \rho_1(x, 0) = &\; 1-2\rho_2(x, 0),\\
\rho_2(x, 0) = &\; 
\begin{cases}
0.15+4x^2(x+1)^2y^2(y+1)^2 &\quad \text{if } -1\le x\le 0
\text{ and }-1\le y\le 0,\\[.3ex]
0.15+4x^2(x+1)^2y^2(1-y)^2 &\quad \text{if } -1\le x\le 0
\text{ and }0\le y\le 1,\\[.3ex]
0.15+4x^2(1-x)^2y^2(y+1)^2 &\quad \text{if } 0\le x\le 1
\text{ and }-1\le y\le 0,\\[.3ex]
0.15+4x^2(1-x)^2y^2(1-y)^2 &\quad \text{if } 0\le x\le 1
\text{ and }0\le y\le 1,\\[.3ex]
0.15 & \quad \text{elsewhere},\\[.3ex]
\end{cases}
% \\
% \rho_3(x, 0) = &\; 1, \quad\rho_4(x, 0) = \; \frac{k^3_+}{k^3_-}=1000,\\
\end{align*}
in two dimensions.
The initial conditions for the other densities are taken to be 
\[
\rho_1(x,0) = 1-2\rho_2(x,0), \;\;
\rho_3(x,0) = 1, \;\;
\rho_4(x,0) = k_+^3/k_-^3=1000.
\]
For the 1D simulation, we take the computation domain to be $\Omega_{1D}=[-16, 16]$
and set the final time of simulation to be $T=1600$.
For the 2D simulation, we take 
a smaller  computational domain with $\Omega_{2D}=[-8, 8]\times[-8, 8]$ and set the final time of simulation to be $T=500$.

We apply the scheme \eqref{aug-mfp-hs}
with $k=4$ on a uniform mesh with mesh size $h=1$ (32 elements in 1D, and $16\times 16$ elements in 2D)
for both problems. 
% It is not easy to initialize the problems 
% for this choice of parameters. 
Here we gradually increase the time step size from $\Delta t=0.01$ to $\Delta t = 0.1$ as initially taking $\Delta t = 0.1$  leads to numerical instability.
% .
% We note that taking $\Delta t = 0.2$ for the 2D simulation eventually leads to numerical instability, 
This may be caused by our splitting version of the ALG2 implementation in Algorithm \ref{alg:3}. 

We record the snapshots of the second-component density at various times in Figure~\ref{fig:X}.
For both cases, we observe pattern formations and the solution reaches a nontrivial steady state at large time.
Finally, we plot the evolution of total energy for both cases in Figure~\ref{fig:Y}, where we observe the expected monotone energy decay.

\begin{figure}[tb]
\centering
\subfigure[1D results. Left to right time: $t=200,400,800,1600$.]{
\label{fig:51x}
\includegraphics[width=0.245\textwidth]{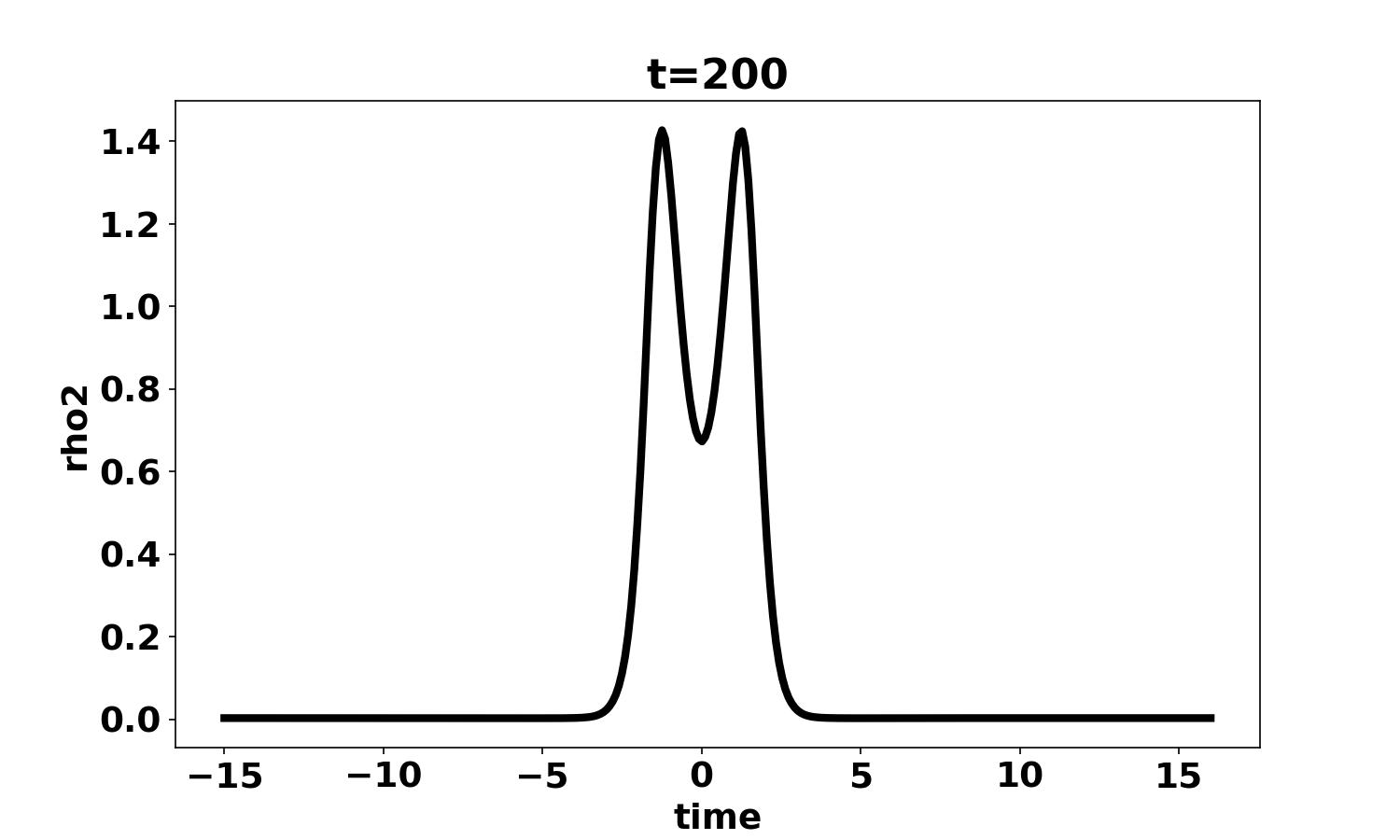}
\includegraphics[width=0.245\textwidth]{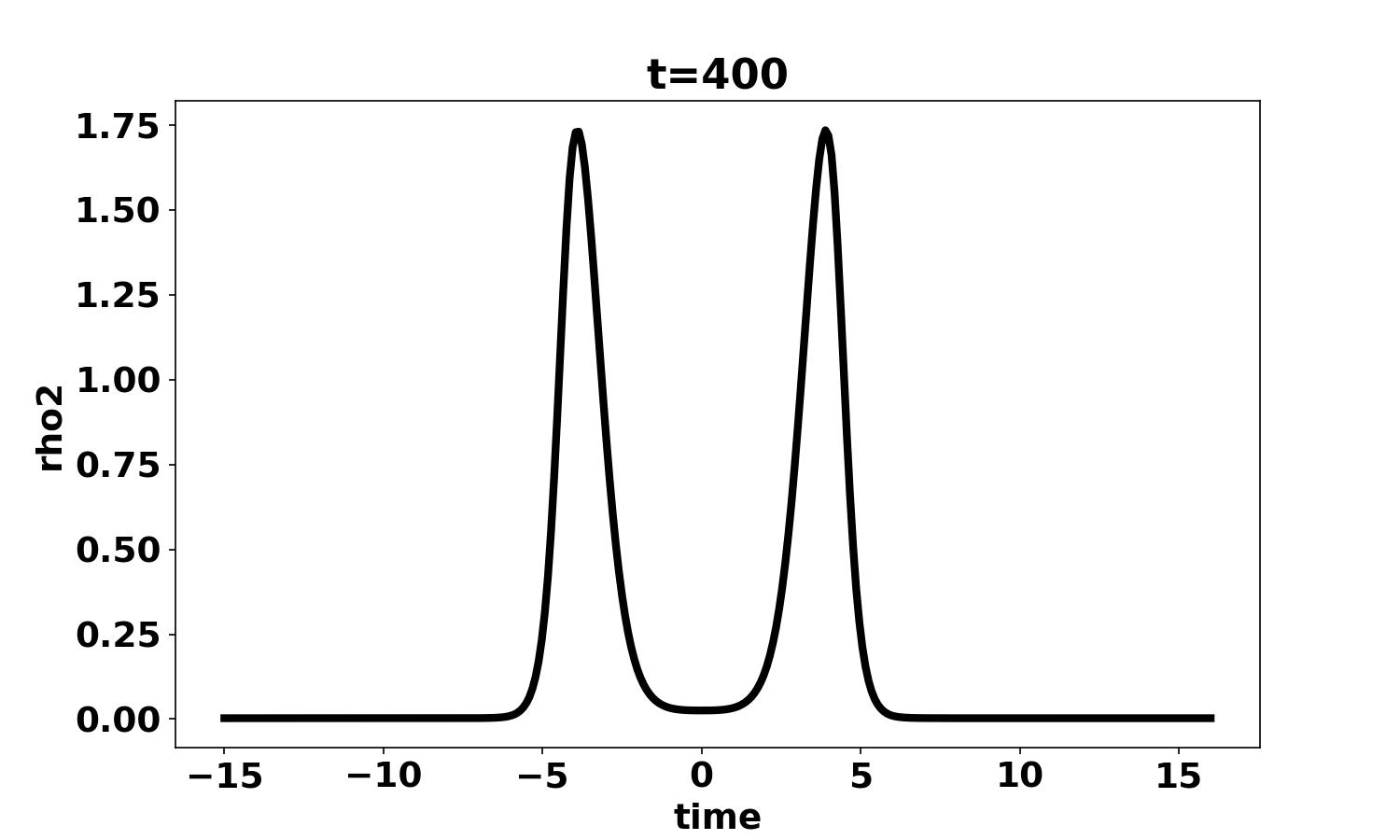}
\includegraphics[width=0.245\textwidth]{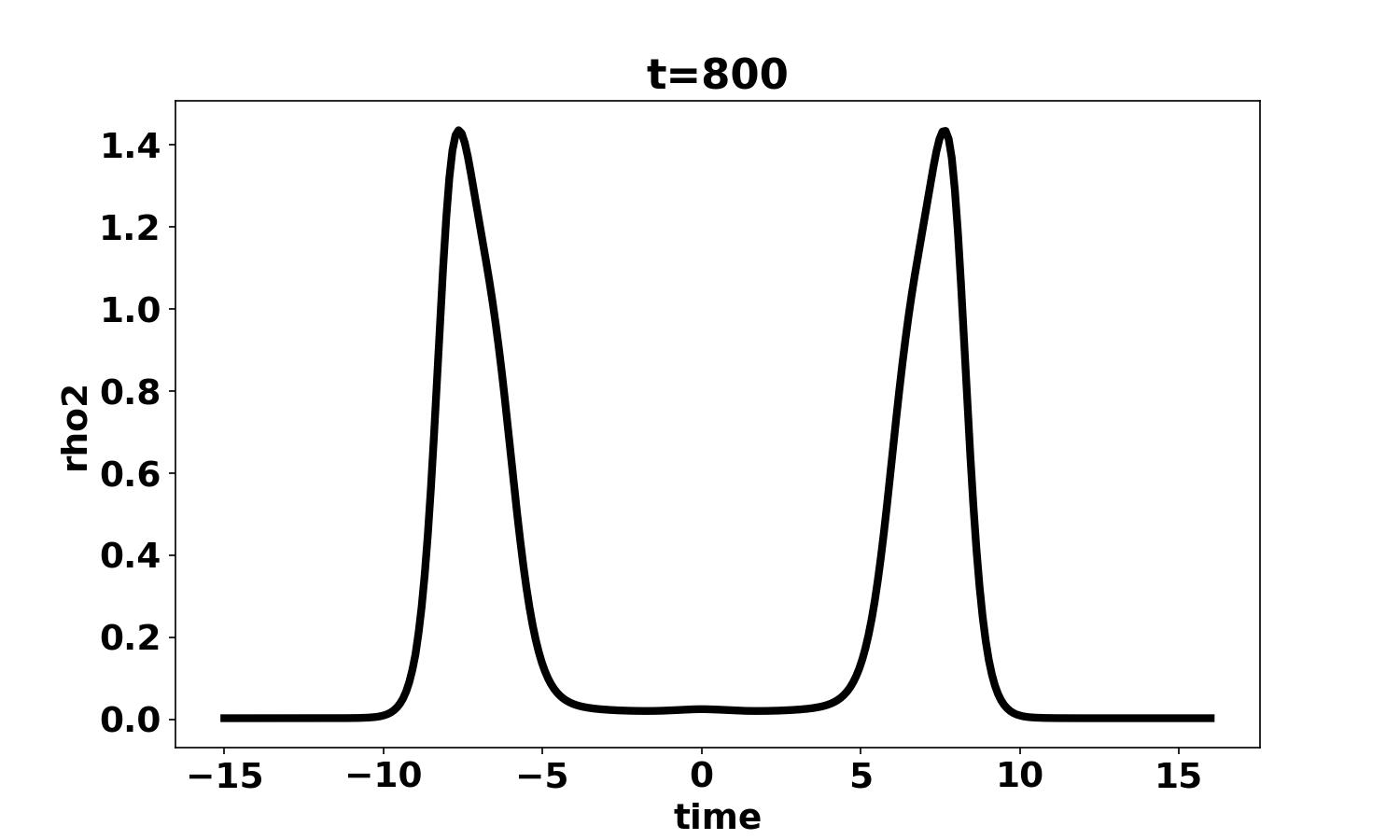}
\includegraphics[width=0.245\textwidth]{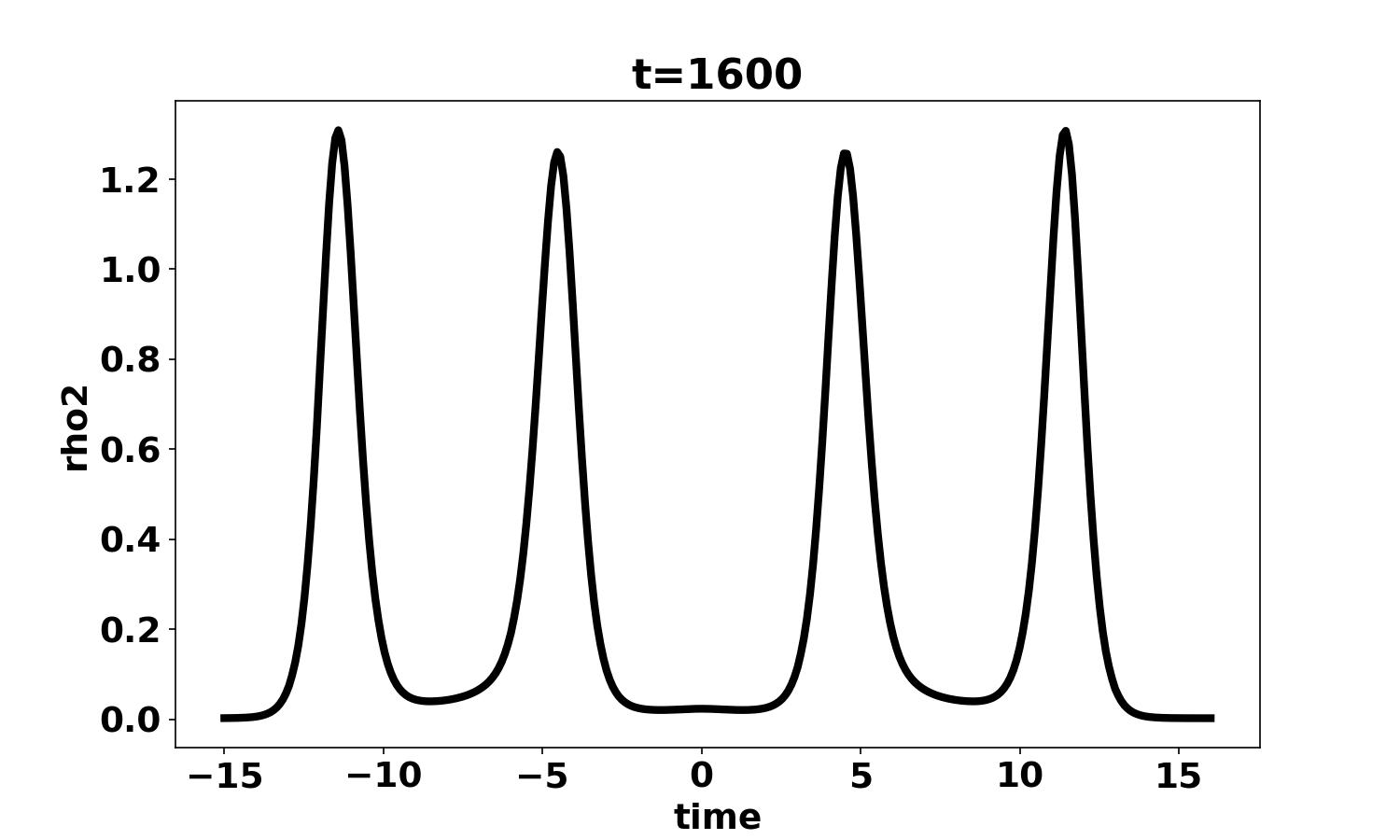}
}
\subfigure[2D results. Left to right time: $t=100,200,300,400,500$]{
\label{fig:52x}
\includegraphics[width=0.192\textwidth]{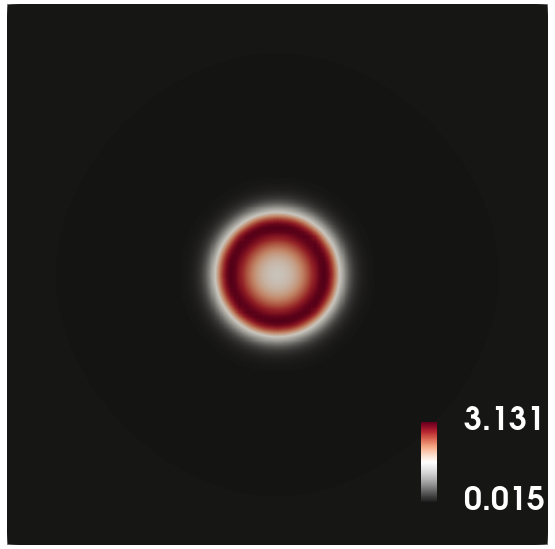}
\includegraphics[width=0.192\textwidth]{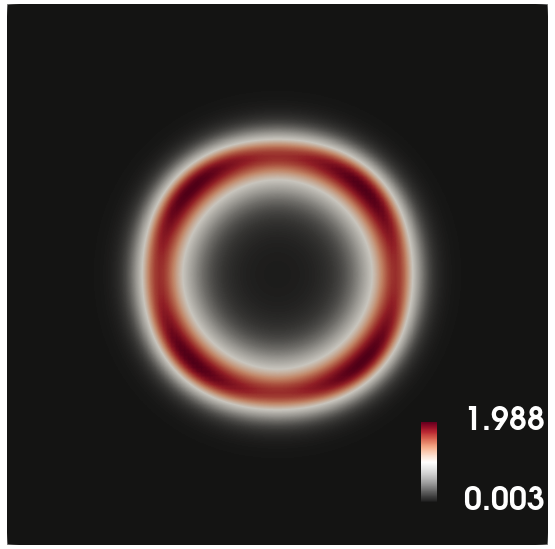}
\includegraphics[width=0.192\textwidth]{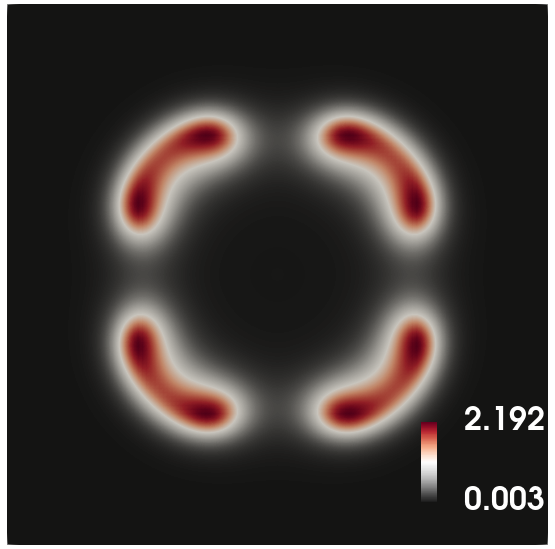}
\includegraphics[width=0.192\textwidth]{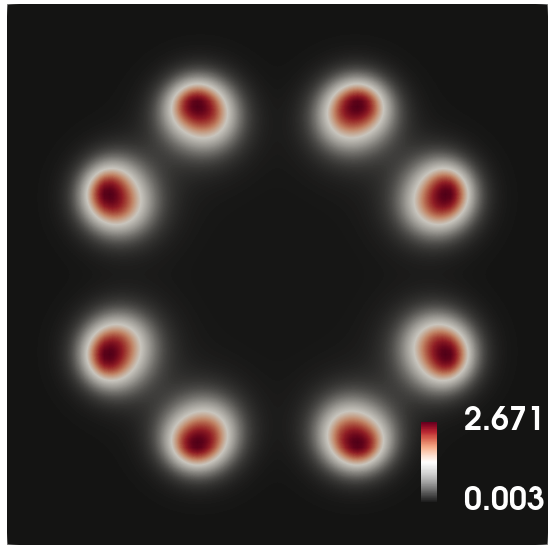}
\includegraphics[width=0.192\textwidth]{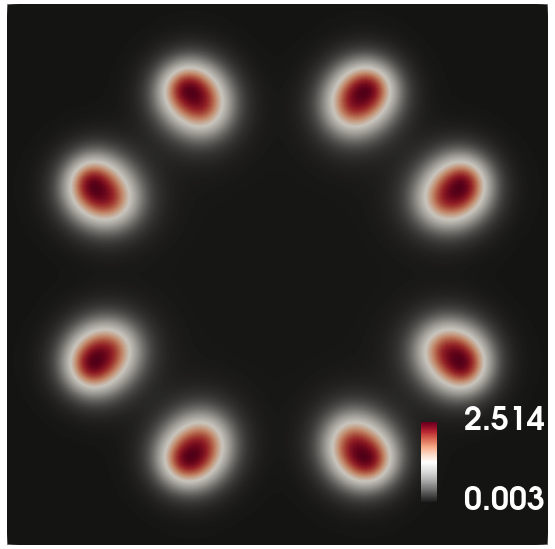}
}
\caption{Example \ref{ex6}. Snapshots of second-component density contours at different times for 1D (top) and 2D (bottom) simulations.}
\label{fig:X}
\end{figure}

\begin{figure}[tb]
\centering
\includegraphics[width=0.48\textwidth]{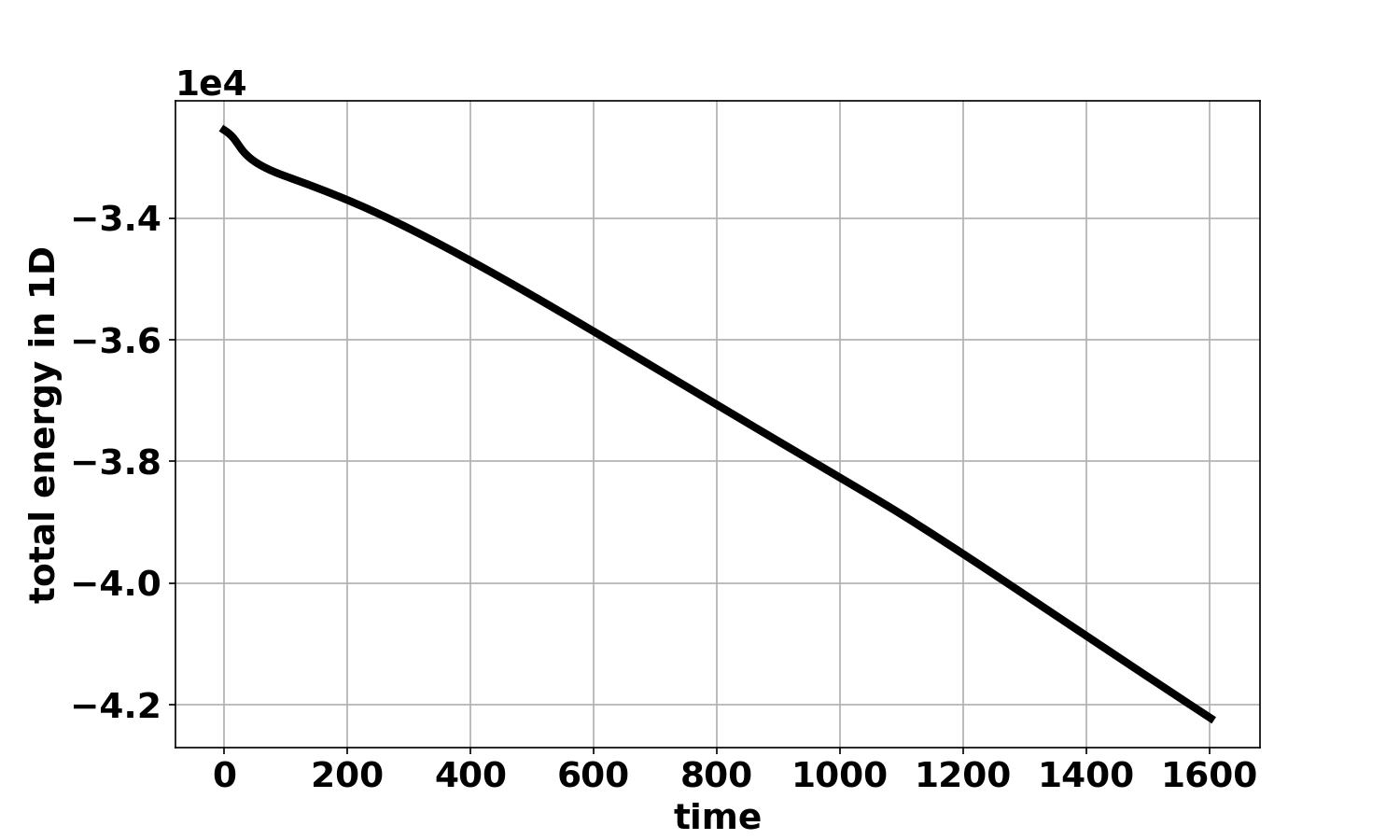}
\includegraphics[width=0.48\textwidth]{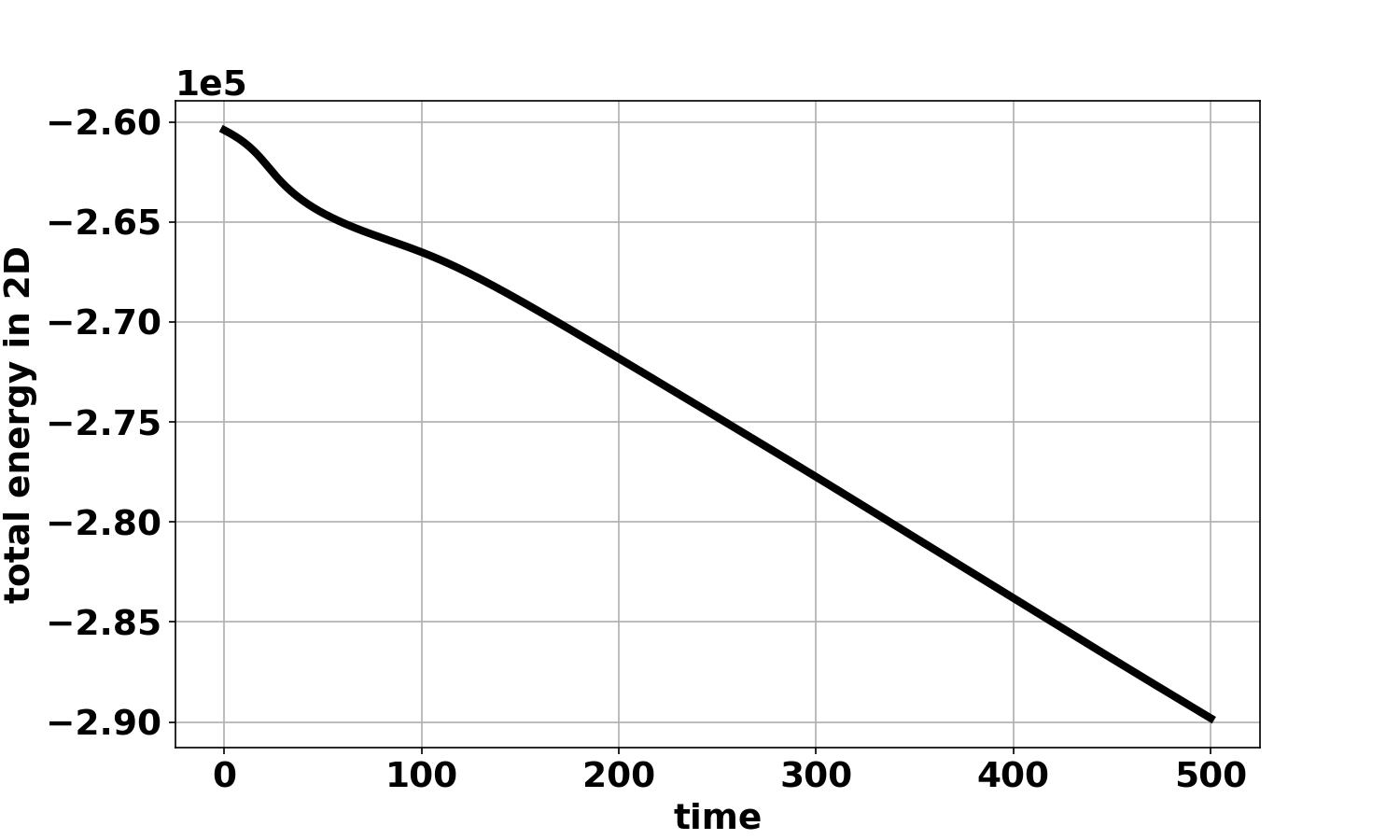}
\caption{Example \ref{ex6}. Evolution of 
total energy in 1D (left) and 2D (right).}
\label{fig:Y}
\end{figure}

% % ---------------------------------------------
% % ---------------------------------------------
% % ---------------------------------------------
\section{Conclusion}
\label{sec:summary}
This paper applies high-order accurate finite element methods in space to compute first-order accuracy implicit-in-time  gradient flows. Our formulation applies a one-step time discretization of the generalized JKO scheme and then uses the ALG2 to calculate optimization problems in each generalized JKO time step. The method is unconditionally stable when the optimization problem is convex. Numerical experiments in two-dimensional gradient flow dynamics, such as Wasserstein gradient flows, Fisher--KPP dynamics, and reversible reaction-diffusion systems, demonstrate the effectiveness of the proposed method with high-order spatial accuracy.

We note that for dissipative dynamics, such as strongly reversible reaction-diffusion systems, different entropies $\mathcal{E}$, and optimal transport-type metrics $V_1$, $V_2$, could produce the same evolutionary equation. In simulations, we suggest selecting a suitable class of entropies and metrics to develop simple and efficient optimization procedures. Some limitations exist for computing implicit-in-time gradient flows in generalized optimal transport metric spaces. The constructed functions $V_1$ and $V_2$ should be nonnegative for entropy dissipation schemes. Our generalized JKO scheme is unstable for many reaction-diffusion equations, e.g., the Allen-Cahn-type equations \cite{shen10X}. We also remark that the current computations are limited to the first--order time accuracy variational-implicit schemes of gradient flows. In future work, we shall design and compute generalized optimal transport and mean field control problems for implicit-in-time fluid dynamics with general conservative-dissipative formulations. Typical examples include regularized conservation laws \cite{li2021controlling, li2022controlling}.

\end{document}